\newcommand{\kom}[1]{}
\renewcommand{\kom}[1]{{\bf [#1]}}
 \def\1{\raisebox{2pt}{\rm{$\chi$}}}
\newtheorem{theorem}{Theorem}[section]
\newtheorem{corollary}[theorem]{Corollary}
\newtheorem{lemma}[theorem]{Lemma}
\newtheorem{proposition}[theorem]{Proposition}
\newtheorem{definition}[theorem]{Definition}
\let\expandafter\oldproof\csname\string\proof\endcsname
\let\oldendproof\endproof
\renewenvironment{proof}[1][\proofname]{%
	\oldproof[\textbf{#1}]%
}{\oldendproof}
\newcommand{\R}{{\mathbb R}}
\newcommand{\N}{{\mathbb N}}
 \newcommand{\eps}{{\varepsilon}}
 \def\1{\raisebox{2pt}{\rm{$\chi$}}}
\let\originalleft\left
\let\originalright\right
\renewcommand{\left}{\mathopen{}\mathclose\bgroup\originalleft}
\renewcommand{\right}{\aftergroup\egroup\originalright}
\newcommand{\abs}[1]{\left|#1\right|}
\newcommand{\norm}[1]{\left|\left|#1\right|\right|}
\newcommand{\Rn}{\mathbb{R}^n}
\newcommand{\Rnn}{\mathbb{R}^{n+1}}
\newcommand{\vp}{\varphi}
\def\vint_#1{\mathchoice%
          {\mathop{\kern 0.2em\vrule width 0.6em height 0.69678ex depth -0.58065ex
                  \kern -0.8em \intop}\nolimits_{\kern -0.4em#1}}%
          {\mathop{\kern 0.1em\vrule width 0.5em height 0.69678ex depth -0.60387ex
                  \kern -0.6em \intop}\nolimits_{#1}}%
          {\mathop{\kern 0.1em\vrule width 0.5em height 0.69678ex
              depth -0.60387ex
                  \kern -0.6em \intop}\nolimits_{#1}}%
          {\mathop{\kern 0.1em\vrule width 0.5em height 0.69678ex depth -0.60387ex
                  \kern -0.6em \intop}\nolimits_{#1}}}
\def\vintslides_#1{\mathchoice%
          {\mathop{\kern 0.1em\vrule width 0.5em height 0.697ex depth -0.581ex
                  \kern -0.6em \intop}\nolimits_{\kern -0.4em#1}}%
          {\mathop{\kern 0.1em\vrule width 0.3em height 0.697ex depth -0.604ex
                  \kern -0.4em \intop}\nolimits_{#1}}%
          {\mathop{\kern 0.1em\vrule width 0.3em height 0.697ex depth -0.604ex
                  \kern -0.4em \intop}\nolimits_{#1}}%
          {\mathop{\kern 0.1em\vrule width 0.3em height 0.697ex depth -0.604ex
                  \kern -0.4em \intop}\nolimits_{#1}}}
\newcommand{\aveint}[2]{\mathchoice%
          {\mathop{\kern 0.2em\vrule width 0.6em height 0.69678ex depth -0.58065ex
                  \kern -0.8em \intop}\nolimits_{\kern -0.45em#1}^{#2}}%
          {\mathop{\kern 0.1em\vrule width 0.5em height 0.69678ex depth -0.60387ex
                  \kern -0.6em \intop}\nolimits_{#1}^{#2}}%
          {\mathop{\kern 0.1em\vrule width 0.5em height 0.69678ex depth -0.60387ex
                  \kern -0.6em \intop}\nolimits_{#1}^{#2}}%
          {\mathop{\kern 0.1em\vrule width 0.5em height 0.69678ex depth -0.60387ex
                  \kern -0.6em \intop}\nolimits_{#1}^{#2}}}
\newcommand{\half}{{\frac{1}{2}}}
\newcommand{\Om}{\Omega}
\newcommand{\diam}{\operatorname{diam}}
\renewcommand{\div}{\operatorname{div}}
\newcommand{\sumn}{\sum_{i=1}^{n}}
\newcommand{\Hu}{\overline{H}}
\newcommand{\Hl}{\underline{H}}
\newtheoremstyle{case}{3mm}{-1,5mm}{}{}{}{:}{ }{}
\theoremstyle{case}
\newcommand{\numberthis}{\addtocounter{equation}{1}\tag{\theequation}}
\newcommand{\leqnomode}{\tagsleft@true}
\newcommand{\reqnomode}{\tagsleft@false}
\DeclareMathOperator{\Tr}{Tr}
\numberwithin{equation}{section}
\let\oldtocsection=\tocsection
\let\oldtocsubsection=\tocsubsection
\let\oldtocsubsubsection=\tocsubsubsection
\renewcommand{\tocsection}[2]{\hspace{0em}\oldtocsection{#1}{#2}}
\renewcommand{\tocsubsection}[2]{\hspace{2em}\oldtocsubsection{#1}{#2}}
\renewcommand{\tocsubsubsection}[2]{\hspace{4em}\oldtocsubsubsection{#1}{#2}}
\newcommand{\overbar}[1]{\mkern 1.5mu\overline{\mkern-1.5mu#1\mkern-1.5mu}\mkern 1.5mu}
\title{Boundary regularity for a general nonlinear parabolic equation in non-divergence form}
\author{Tapio Kurkinen}
\email{tapio.j.kurkinen@jyu.fi}
\date{\today}
\keywords{regular boundary point, barrier, Perron's method, viscosity solutions, nonlinear equation, $p$-parabolic equation, exterior ball condition}
\subjclass[2020]{35K61 (primary); 35K65, 35K67, 35D40 (secondary)}
\begin{document}

	\begin{abstract}
		We characterize regular boundary points in terms of a barrier family for a general form of a parabolic equation that generalizes both the standard parabolic $p$-Laplace equation and the normalized version arising from stochastic game theory. Using this result we prove geometric conditions that ensure regularity by constructing suitable barrier families. We also prove that when $q<2$, a single barrier does not suffice to guarantee regularity.
	\end{abstract}
	\maketitle
\section{Introduction}
\label{sec:intro}
We examine the boundary regularity for the following general non-divergence form version of the nonlinear parabolic equation
\begin{equation}
	\label{eq:rgnppar}
	\partial_t u=\abs{\nabla u}^{q-p}\div\left(\abs{\nabla u}^{p-2}\nabla u\right)=\abs{\nabla u}^{q-2}(\Delta u + (p-2)\Delta_\infty^Nu),
\end{equation}
where $q>1$ and $p>1$. When $q=p$, this reduces to the usual $p$-parabolic equation and when $q=2$, we get the normalized $p$-parabolic equation arising from stochastic tug-of-war games.

A boundary point is called \textit{regular} with respect to a partial differential equation if all solutions to the Dirichlet problem with continuous boundary values attain their boundary values continuously. Thus a given Dirichlet problem in a set is solvable in the classical sense if and only if all boundary points of the set are regular. Our main result is that the existence of a family of barrier functions at a point is equivalent to that point being regular. There are geometric conditions that imply the existence of barrier families and thus also imply boundary regularity by this characterization. We also show that the existence of a single barrier is not enough when $q<2$. This problem remains open when $q>2$. A key idea we use in the proofs is a radial connection to the $p$-parabolic case. The radial solutions of \eqref{eq:rgnppar} solve in a suitable sense a weighted one dimensional $q$-parabolic equation as proven by Parviainen and Vázquez \cite{Parviainen2020}. Thus the radial barrier functions and sets used in the proofs are similar between these two equations. Using barrier constructions, we prove that the exterior ball condition and two other geometric conditions guarantee boundary regularity. 

Parabolic boundary regularity is delicate. Petrovski\u{\i} criterion for the one-dimensional heat equation, presented in \cite{Petrovksii1934} and proven in \cite{Petrovksii1935}, shows that a boundary point that is regular for the equation
\begin{equation*}
	\partial_{t}u=\Delta u
\end{equation*} turns out to be irregular for the multiplied equation
\begin{equation*}
	2\partial_{t}u=\Delta u.
\end{equation*} However surprisingly boundary points remain regular for all multiplied $p$-parabolic equation when $p\not=2$ as proven in \cite{Bjorn2015}. We prove that similar phenomenon happens for equation \eqref{eq:rgnppar} when $q\not=2$. When $q=2$, any multiple of a solution is also a solution and thus existence of a barrier implies the existence of a barrier family. It would seem that this might suggest that one barrier is not enough when $q>2$, but we have not yet found a counterexample or a proof for the contrary.

Characterizing regular boundary points for different equations has a long history. One of the most celebrated of these is the original Wiener criterion proven by Norbert Wiener in 1924 \cite{Wiener1924} for the Laplace equation. Wiener type criterion for the heat equation was proven by Evans and Gariepy \cite{Evans1982} but remains open for the usual $p$-parabolic equation. For equations of $p$-parabolic type, the approach using barrier functions has proven fruitful. These seem to date back to Poincaré \cite{Poincare1890} but were named by Lebesgue in \cite{Lebesgue1924} where he characterizes regularity for the Laplace equation using barriers. Granlund, Lindqvist, and Martio extended the Perron method and established the barrier approach to the elliptic $p$-Laplacian in their paper \cite{Granlund1986} and developed it in various papers. An overview of the elliptic results is given in the book by Heinonen, Kilpeläinen, and Martio \cite{Heinonen2006}. The theory for the $p$-parabolic case was initiated by Kilpeläinen and Lindqvist \cite{Kilpelainen1996} where they established the parabolic Perron method and suggested a barrier approach. Björn, Björn, Gianazza, and Parviainen characterized boundary regularity using a family of barriers in \cite{Bjorn2015} and proved that a single barrier does not suffice for singular exponents in \cite{Bjorn2017}. A single barrier turns out to be enough for the normalized $p$-parabolic equation as shown by Bannerjee and Garofalo \cite{Banerjee2014}. Björn, Björn, and Parviainen proved a tusk condition and a Petrovski\u{\i} criterion for this equation in \cite{Bjorn2019}. Boundary regularity for the porous medium equation was examined in \cite{Bjorn2018}.

Since equation \eqref{eq:rgnppar} is in non-divergence form except in the special case, we will use the concept of viscosity solutions. A suitable definition taking account potential singularities was established by Ohnuma and Sato in \cite{Ohnuma1997}. The normalized $p$-parabolic equation arises from game theory which was first examined in the parabolic setting in \cite{Manfredi2010}. This problem has attained recent interest for example in \cite{Jin2017}, \cite{Hoeg2019}, \cite{Dong2020} and \cite{Andrade2022} in addition to already mentioned \cite{Bjorn2019}.
The general form of \eqref{eq:rgnppar} has been examined for example in
\cite{Imbert2019},\cite{Parviainen2020},\cite{Kurkinen2022} and \cite{Kurkinen2023}.

The structure of the paper is as follows. In Section \ref{sec:pre} we present a suitable definition of viscosity solutions to equation \eqref{eq:rgnppar} that takes into account the potential singularity of the equation and state some known results we need later. In Section \ref{sec:ecomp} we present and prove an elliptic-type comparison principle for equation \eqref{eq:rgnppar}. In Section \ref{sec:perron} we define Perron solutions and prove some basic properties. Sections \ref{sec:barrier} and \ref{sec:count} consist of defining boundary regularity and barriers and proving our main result. We also prove that regularity is a local property and show by a counterexample that a single barrier is not enough to prove regularity. In Section \ref{sec:ext}, we establish the exterior ball condition and a few other geometric conditions by constructing suitable barrier families. In Section \ref{sec:app}, we analyze the connection of our definition for a barrier and the ones appearing in the literature for other equations.

\section{Prerequisites}
\label{sec:pre}
In this paper, we denote the dimension by $n$ and let $\Om\subset\Rn$ and $\Theta\subset\Rnn$ be open and bounded sets. Denote $\Om_T=\Om\times(0,T)$ and $\Om_{t_1,t_2}=\Om\times(t_1,t_2)$ the spacetime cylinders and a parabolic boundary by
\begin{equation*}
	\partial_{p}\Om_{t_1,t_2}=\left(\Om\times\{t_1\}\right)\cup\left({\partial\Om\times[t_1,t_2]}\right).
\end{equation*}
We denote the Euclidean ball of radius $r>0$ centered at $x_0\in\Rn$ by $B_r(x_0)$ and $Q_r$ denotes the scaled cylinder
\begin{equation*}
	Q_r=B_r(0)\times(-r^2,0].
\end{equation*} For $\xi\in\Rnn$ and $A\subset \Rnn$, we denote
\begin{equation*}
	\xi+A=\left\{\xi+a\mid a\in A\right\}.
\end{equation*}

When $\nabla u\not =0$, we denote
\begin{equation}
	\Delta_p^qu=\abs{\nabla u}^{q-p}\div\left(\abs{\nabla u}^{p-2}\nabla u\right)=\abs{\nabla u}^{q-2}(\Delta u + (p-2)\Delta_\infty^Nu),
\end{equation}
where $p>1$ and $q>1$ are real parameters and the normalized or game theoretic infinity Laplace operator is given by
\begin{equation*}
	\Delta_\infty^Nu=\sum_{i,j=1}^{n}\frac{\partial_{x_i}u \, \partial_{x_j}u \, \partial_{x_i x_j}u}{\abs{\nabla u}^2}.
\end{equation*}
Thus equation \eqref{eq:rgnppar} can be written as
\begin{equation*}
	\partial_tu=\Delta_p^qu.
\end{equation*}

 Denote
\begin{equation}
	\label{eq:gnnpparf}
	F(\eta,X)=\abs{\eta}^{q-2}\Tr\left(X+(p-2)\frac{\eta\otimes \eta}{\abs{\eta}^2}X\right)
\end{equation}
where $(a\otimes b)_{ij}=a_ib_j$, so that
\begin{align*}
	F(\nabla u,D^2u)&=\abs{\nabla u}^{q-2}(\Delta u+(p-2)\Delta_\infty^Nu)=\Delta_p^qu
\end{align*}
whenever $\nabla u\not=0$. This $F$ is \textit{degenerate elliptic}, meaning that
\begin{equation*}
	F(\eta,X)\leq F(\eta,Y)
\end{equation*} for all $\eta\in\Rn\setminus\{0\}$ and $X\leq Y$. 

We will need to restrict the class of test functions in the definition of a viscosity solution to deal with the singularity of the equation near critical points.
Let $\mathcal{F}(F)$ be the set of functions $f\in C^2([0,\infty))$ such that
\begin{equation*}
	f(0)=f'(0)=f''(0)=0 \text{ and } f''(r)>0 \text{ for all }r>0,
\end{equation*}
and also require that for $g(x)=f(\abs{x})$, it holds that
\begin{equation*}
	\lim_{\substack{x\to0\\x\not=0}}F(\nabla g(x),D^2g(x))=0.
\end{equation*}
This set $\mathcal{F}(F)$ is never empty because it is easy to see that $f(r)=r^\beta\in\mathcal{F}(F)$ for any $\beta>\max\left(\frac{q}{q-1},2\right)$. Note also that if $f\in\mathcal{F}(F)$, then $\lambda f\in\mathcal{F}(F)$ for all $\lambda>0$.

Additionally define the set
\begin{equation*}
	\Sigma=\{\sigma\in C^1(\R)\mid \sigma \text{ is even}, \sigma(0)=\sigma'(0)=0, \text{ and }\sigma(r)>0 \text{ for all }r>0\}.
\end{equation*}
We use $\mathcal{F}(F)$ and $\Sigma$ to define an admissible set of test functions for viscosity solutions.
\begin{definition}\sloppy
	\label{def:admissible}
	A function $\vp\in C^2(\Theta)$ is admissible at a point $(x_0,t_0)\in\Theta$ if either ${\nabla \vp(x_0,t_0)\not=0}$ or there are $\delta>0$, $f\in\mathcal{F}(F)$ and $\sigma\in\Sigma$ such that
	\begin{equation*}
		\abs{\vp(x,t)-\vp(x_0,t_0)-\partial_{t}\vp(x_0,t_0)(t-t_0)}\leq f(\abs{x-x_0})+\sigma(t-t_0),
	\end{equation*}
	for all $(x,t)\in B_\delta(x_0)\times (t_0-\delta, t_0+\delta)$.
\end{definition}
Note that by definition a function $\vp$ is automatically admissible at a point $(x_0,t_0)$ if either $\nabla\vp(x_0,t_0)\not=0$ or the function $-\vp$ is admissible at a point $(x_0,t_0)$.
\begin{definition}
	A function $u:\Theta\rightarrow\mathbb{R}\cup\left\{ \infty\right\} $
	is a viscosity supersolution to
	\[
	\partial_{t}u=\Delta_{p}^{q}u\quad\text{in }\Theta
	\]
	if the following three conditions hold.
	\begin{enumerate}
		\label{def:super}
		\item $u$ is lower semicontinuous,
		\item $u$ is finite in a dense subset of $\Theta$,
		\item whenever an admissible $\vp\in C^{2}(\Theta)$ touches $u$ at $\xi\in\Theta$
		from below, we have
		\[
		\begin{cases}
			\partial_{t}\vp(\xi)-\Delta_{p}^{q}\vp(\xi)\geq0 & \text{if }\nabla \vp(\xi)\not=0,\\
			\partial_{t}\vp(\xi)\geq0 & \text{if }\nabla \vp(\xi)=0.
		\end{cases}
		\]
	\end{enumerate}
	A function $u:\Theta\rightarrow\mathbb{R}\cup\left\{ -\infty\right\} $
	is a viscosity subsolution if $-u$ is a viscosity supersolution. A function $u:\Theta\rightarrow\mathbb{R} $
	is a viscosity solution if it is a supersolution and a subsolution.
\end{definition}
Note that if no admissible test function exists at a point $\xi$, the last condition is automatically satisfied.
If $q\geq2$, then viscosity solutions can be defined
in a standard way by using semicontinuous extensions, see Proposition 2.2.8 in \cite{Giga2006}.

Our proofs use two different comparison principles. The first is the standard parabolic comparison principle, which is proven as Theorem 3.1 in \cite{Ohnuma1997}. Here we assume that the solutions are ordered on the parabolic boundary of the set.
\begin{theorem}
	\label{thm:comp}
	Suppose that u is a viscosity supersolution and $v$ is a viscosity subsolution to \eqref{eq:rgnppar} in $\Omega_{T}$. If
	$$
	\infty \neq \limsup _{\Omega_{T} \ni(y, s) \rightarrow(x, t)} v(y, s) \leq \liminf _{\Omega_{T} \ni(y, s) \rightarrow(x, t)} u(y, s) \neq-\infty
	$$
	for all $(x, t) \in \partial_{p} \Omega_{T},$ then $v \leq u$ in $\Omega_{T}$.
\end{theorem}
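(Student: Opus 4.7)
The plan is to adapt the doubling-of-variables argument of Crandall, Ishii, and Lions, with the singular/degenerate behavior of $F$ handled through the restricted admissible test functions of Definition \ref{def:admissible}. Assume for contradiction that $\sup_{\Om_T}(v-u)>0$. By the parabolic boundary hypothesis and the semicontinuity of $u$ and $v$, such a positive supremum must be attained strictly inside $\Om_T$. Introducing a small auxiliary term $\zeta/(T-t)$ with $\zeta>0$ prevents the maximum from escaping to $t=T$ and also gives a strict sign in the time-derivative inequality at the end.

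Next I would study the doubled functional
\begin{equation*}
\Phi_\eps(x,t,y,s)=v(x,t)-u(y,s)-\frac{1}{2\eps}\abs{x-y}^{\beta}-\frac{1}{2\eps}(t-s)^2-\frac{\zeta}{T-t},
\end{equation*}
where $\beta>\max\{q/(q-1),2\}$ is chosen so that $r\mapsto r^\beta/(2\eps)$ lies in $\F(F)$. Let $(x_\eps,t_\eps,y_\eps,s_\eps)$ be a maximizer; standard penalization estimates give $\abs{x_\eps-y_\eps}\to 0$ and $t_\eps-s_\eps\to 0$, and for suitably small $\zeta$ and large $1/\eps$ the maximizer stays in the interior. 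The parabolic Crandall-Ishii lemma then supplies numbers $\tau_1,\tau_2\in\R$ and symmetric matrices $X\leq Y$ with
\begin{equation*}
\tau_1-\tau_2=\frac{\zeta}{(T-t_\eps)^2}>0,
\end{equation*}
so that $(\tau_1,\eta_\eps,X)$ lies in the parabolic superjet of $v$ at $(x_\eps,t_\eps)$ and $(\tau_2,\eta_\eps,Y)$ in the subjet of $u$ at $(y_\eps,s_\eps)$, where $\eta_\eps=\tfrac{\beta}{2\eps}\abs{x_\eps-y_\eps}^{\beta-2}(x_\eps-y_\eps)$.

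When $\eta_\eps\neq 0$ along a subsequence, the gradient-nonzero branch of the sub/supersolution definitions applies directly, and the degenerate ellipticity of $F$ yields $\tau_1\leq F(\eta_\eps,X)\leq F(\eta_\eps,Y)\leq\tau_2$, contradicting $\tau_1-\tau_2>0$. The main obstacle is the singular branch $\eta_\eps=0$, i.e.\ $x_\eps=y_\eps$. Here the admissibility condition of Definition \ref{def:admissible} must be verified: one checks that the penalty $\psi(x,t)=\frac{1}{2\eps}\abs{x-y_\eps}^\beta+\frac{1}{2\eps}(t-s_\eps)^2+\zeta/(T-t)$ and its symmetric counterpart testing $u$ are admissible at the maximizer, with $f(r)=r^\beta/(2\eps)\in\F(F)$ and a suitable $\sigma\in\Sigma$. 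The defining decay $\lim_{x\to 0}F(\nabla g,D^2g)=0$ for $g(x)=f(\abs{x})$ that is built into $\F(F)$ then forces the reduced inequalities $\tau_1\geq 0$ and $\tau_2\leq 0$ after passage to the limit, again contradicting $\tau_1-\tau_2>0$. Sending $\eps\to 0$ and then $\zeta\to 0$ concludes. I expect the delicate step to be precisely this verification that the chosen polynomial penalty realizes an admissible test function with the required $F$-vanishing behaviour at the critical point, which is what the class $\F(F)$ has been designed to guarantee.
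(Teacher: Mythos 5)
The paper does not give its own proof of this theorem: it cites it as Theorem~3.1 of Ohnuma and Sato \cite{Ohnuma1997}. Your sketch, however, follows essentially the same Ishii-type doubling-of-variables strategy that the paper deploys for its elliptic-type variant (Theorem~\ref{thm:ecomp}): a penalty $\frac{1}{2\eps}\abs{x-y}^\beta$ with $\beta>\max\{q/(q-1),2\}$ chosen so that the radial penalty lies in $\F(F)$, an auxiliary term blowing up at $t=T$, a case distinction on whether $x_\eps=y_\eps$, and in the singular case a reduction to the pure time-derivative branch of Definition~\ref{def:super} via admissibility (compare Lemma~\ref{le:admi}). So the overall architecture is right and consistent with how the paper (and the cited source) handles this equation.

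There is, though, a sign error in your singular branch that, as written, destroys the contradiction. With $(\tau_1,\eta_\eps,X)$ in the superjet of the subsolution $v$ and $(\tau_2,\eta_\eps,Y)$ in the subjet of the supersolution $u$, and $\eta_\eps=0$, the definitions give $\tau_1\le 0$ (subsolution, test function touching from above with vanishing gradient) and $\tau_2\ge 0$ (supersolution, touching from below), hence $\tau_1-\tau_2\le 0$, which does contradict $\tau_1-\tau_2=\zeta/(T-t_\eps)^2>0$. You wrote $\tau_1\geq 0$ and $\tau_2\leq 0$; those only give $\tau_1-\tau_2\geq 0$, which is compatible with $\tau_1-\tau_2>0$ and so yields no contradiction. (Your non-singular branch has the signs correct, so this is just a local slip, but it should be fixed, and the phrase ``after passage to the limit'' should be replaced by a direct appeal to condition~(3) of Definition~\ref{def:super} once admissibility of the explicit test function at the maximizer has been checked: no limit is needed, since the decay condition built into $\F(F)$ is what guarantees admissibility, and once the gradient vanishes the viscosity inequality involves only $\partial_t\vp$.)
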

The second is the elliptic-type comparison principle which holds for arbitrary $\Theta$ as long as we compare over the entire Euclidean boundary. We state and prove this in the next section.

Consider the Dirichlet problem
\begin{equation}
	\label{eq:dirichlet}
	\begin{cases}
		\partial_t u=\abs{\nabla u}^{q-p}\div\left(\abs{\nabla u}^{p-2}\nabla u\right) & \text{ in } \Om_T \\
		u=g & \text{ on }\partial_{p}\Om_T.
	\end{cases}
\end{equation}
We have the following existence and uniqueness results for simple space-time cylinders.
\begin{theorem}
	\label{thm:existball}
	Let $g\in C(\partial_{p}B_T)$. Then there exists a unique viscosity solution $u\in C(\overbar{B}_T)$ to \eqref{eq:dirichlet} with $\Om_T=B_T$.
\end{theorem}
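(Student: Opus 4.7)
The plan is to obtain uniqueness directly from the parabolic comparison principle and existence by a vanishing-viscosity approximation combined with explicit barriers. For uniqueness, suppose $u_1, u_2 \in C(\overbar{B}_T)$ both solve \eqref{eq:dirichlet}. They agree with $g$ on $\partial_p B_T$, so the upper and lower inner limits at any parabolic boundary point coincide. Applying Theorem \ref{thm:comp} with $(u, v) = (u_1, u_2)$ and then with the roles of sub- and supersolution swapped yields $u_1 \equiv u_2$ on $B_T$.

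For existence, I would regularize the operator from \eqref{eq:gnnpparf} by setting
\begin{equation*}
F^\eps(\eta, X) = (\abs{\eta}^2 + \eps^2)^{(q-2)/2}\Tr\left(X + (p-2)\frac{\eta\otimes\eta}{\abs{\eta}^2 + \eps^2}X\right) + \eps\Tr(X),
\end{equation*}
and uniformly approximating $g$ by smooth data $g^\eps$. Since $p, q > 1$ this operator is smooth and, on bounded gradient ranges, uniformly parabolic, so classical quasilinear parabolic theory produces unique solutions $u^\eps \in C^{2,1}(B_T) \cap C(\overbar{B}_T)$ matching $g^\eps$ on $\partial_p B_T$. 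The maximum principle gives $\eps$-uniform $L^\infty$ bounds, and interior H\"older estimates for viscosity solutions give interior equicontinuity. Once an $\eps$-uniform modulus of continuity up to $\partial_p B_T$ is established, Arzel\`a--Ascoli extracts a uniform limit $u^\eps \to u$, and stability of viscosity solutions (within the admissible class of Definition \ref{def:admissible}) identifies $u$ as the desired viscosity solution of \eqref{eq:dirichlet}.

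The main obstacle is producing barriers at every point of $\partial_p B_T$ that are valid simultaneously for every $F^\eps$. At a lateral point $(x_0, t_0) \in \partial B_r(0) \times [0, T]$ the exterior ball condition is trivially satisfied, so a radial function of the form $\psi(\abs{x - y_0}) + C(t - t_0)$ anchored at an exterior center $y_0$ produces upper and lower barriers; the profile $\psi(s) = s^\beta$ with $\beta > \max(q/(q-1), 2)$ lies in $\mathcal{F}(F)$ and remains admissible for every $F^\eps$. At an initial-cap point $(x_0, 0) \in \overbar{B_r(0)} \times \{0\}$, a spatial bump of the same form plus $Ct$ with $C$ large absorbs the initial data. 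The radial reduction of \eqref{eq:rgnppar} to a one-dimensional weighted $q$-parabolic equation established in \cite{Parviainen2020} makes these barrier constructions explicit and $\eps$-independent, matching the prescribed modulus of continuity of $g$.
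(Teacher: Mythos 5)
Your uniqueness argument is correct and is what anyone would do. The existence argument, however, takes a genuinely different route from the paper: the paper does not give a from-scratch proof at all, but invokes Giga's Perron method (\cite[Theorem 2.4.9]{Giga2006}) together with the Ohnuma--Sato framework \cite[Section~4]{Ohnuma1997} for singular degenerate parabolic operators. Perron's method needs only a comparison principle plus boundary barriers; you instead propose a vanishing-viscosity scheme, which introduces two intermediate obligations that you treat as routine but which are the actual technical content here.

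First, classical solvability of the regularized problems is not free. Your $F^\eps$ is uniformly parabolic only on a priori bounded gradient ranges, but the coefficient $(|\eta|^2+\eps^2)^{(q-2)/2}$ grows (for $q>2$) or degenerates (for $q<2$) as $|\eta|\to\infty$ or $0$, so invoking ``classical quasilinear parabolic theory'' requires an $\eps$-uniform a priori gradient bound, which you do not supply and which is not obvious across the whole range $p,q>1$. Second, and more fundamentally, the step ``stability of viscosity solutions (within the admissible class of Definition~\ref{def:admissible}) identifies $u$'' is precisely the nontrivial lemma that Ohnuma and Sato prove. Because the operator is singular at $\nabla u=0$ and the test class is restricted through $\mathcal{F}(F)$ and $\Sigma$, one cannot quote the textbook stability theorem for continuous $F$; one must verify that locally uniform limits of solutions to the $\eps$-regularized equations are solutions of the singular limit equation in the restricted test-function sense (this is the content of \cite[Theorem~6.1]{Ohnuma1997}, from which the paper's Lemma~\ref{lem:bddseq} is derived). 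Your barriers also need a small repair: at a lateral point the term $C(t-t_0)$ is negative for $t<t_0$, so for a one-sided barrier you should use something like $|t-t_0|$ or restrict to one time side; the paper's later barrier constructions (e.g., Lemma~\ref{le:calc}) show the shape that actually closes. In short, the approach is viable but the two glossed-over steps are exactly what the paper's citations are carrying, so as written there is a genuine gap in the existence part.
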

This theorem follows from \cite[Theorem 2.4.9]{Giga2006} and falls into the general framework studied by Ohnuma and Sato in \cite[Section 4]{Ohnuma1997}, where they prove the existence for the Cauchy problem in Corollary 4.10.

Imbert, Jin, and Silvestre proved the $C^{1,\alpha}$-regularity for solutions to \eqref{eq:rgnppar}. The time estimate we need follows from \cite[Lemma 3.1]{Imbert2019} and the space estimate follows from \cite[Corollary 2.4]{Imbert2019}. Combining these we get the following corollary. 

\begin{corollary}
	\label{cor:holderest}
	Let $u$ be a viscosity solution to \eqref{eq:rgnppar} in $Q_4$ and $\alpha\in(0,1)$. Then there exists a two positive constants $C_1$ and $C_2$ depending only on $n$, $p$, $q$ and $\norm{u}_{L^\infty(Q_4)}$ such that
	\begin{equation*}
		\abs{u(x,t)-u(y,s)}\leq C_1\abs{x-y}^\alpha+C_2\abs{t-s}^{\half}
	\end{equation*}
	for all $(x,t),(y,s)\in Q_1$.
\end{corollary}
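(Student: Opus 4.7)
The plan is to combine the two cited results from Imbert, Jin, and Silvestre via the triangle inequality. Namely, for $(x,t),(y,s)\in Q_1$ write
\begin{equation*}
\abs{u(x,t)-u(y,s)} \le \abs{u(x,t)-u(y,t)} + \abs{u(y,t)-u(y,s)}.
\end{equation*}
Since both $(x,t)$ and $(y,t)$ lie in $Q_1 \subset Q_4$, the spatial $C^\alpha$-estimate from \cite[Corollary 2.4]{Imbert2019} applies to the first term and yields a bound of the form $C_1 \abs{x-y}^\alpha$, where $C_1$ depends only on $n,p,q,\alpha$ and $\norm{u}_{L^\infty(Q_4)}$. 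For the second term, $(y,t)$ and $(y,s)$ differ only in time, so the time-regularity estimate from \cite[Lemma 3.1]{Imbert2019} yields a bound of the form $C_2 \abs{t-s}^{1/2}$, with $C_2$ again depending only on the allowed parameters.

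The only mildly delicate point is making sure the intermediate point $(y,t)$ sits comfortably inside the domain where the cited estimates are stated: since $y\in B_1(0)$ and $t\in(-1,0]$, we have $(y,t)\in Q_1$, well away from $\partial_p Q_4$, so both the interior space and time estimates apply without further scaling. Summing the two bounds and taking the maximum of the two constants gives the claim. There is no substantive obstacle beyond tracking the dependence of $C_1$ and $C_2$ through the two cited statements; these dependencies are precisely as stated (on $n,p,q$ and $\norm{u}_{L^\infty(Q_4)}$, with $C_1$ also depending on the chosen Hölder exponent $\alpha$).
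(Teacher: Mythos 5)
Your argument---triangle inequality through the intermediate point $(y,t)$, applying the spatial estimate from \cite[Corollary 2.4]{Imbert2019} and the time estimate from \cite[Lemma 3.1]{Imbert2019}---is exactly what the paper has in mind; the paper states the corollary without proof, asserting it follows by "combining these" two cited results. Your write-up makes the combination explicit and correctly checks that the intermediate point lies in $Q_1\subset Q_4$, so there is nothing to add.
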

Various regularity results for the non-homogeneous version of \eqref{eq:rgnppar} were proven by Attouchi in \cite{Attouchi2020} and Attouchi and Ruosteenoja in \cite{Attouchi2020b}.

In our proofs, we need the following stability result which is a special case of Theorem 5.2. in \cite{Imbert2019} which follows from Theorem 6.1 in \cite{Ohnuma1997}. We provide a proof by modifying the proof used for the $p$-parabolic equation, see \cite[Lemma 3.4]{Kilpelainen1996}.
\begin{lemma}
	\label{lem:bddseq}
	Suppose that $(u_i)_{i=1}^\infty$ is a locally uniformly bounded sequence of viscosity solutions to \eqref{eq:rgnppar} in $\Theta$. Then there exists a subsequence that converges locally uniformly in $\Theta$ to a viscosity solution $u$.
\end{lemma}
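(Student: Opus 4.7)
My plan is to extract a locally uniformly convergent subsequence using the H\"older estimate in Corollary \ref{cor:holderest}, and then verify that the limit inherits the viscosity supersolution and subsolution properties by the standard stability argument for viscosity solutions. The delicate point will be the case when the gradient of the test function vanishes, which is handled by the admissibility condition in Definition \ref{def:admissible}.

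For the extraction, I would fix a compact set $K\subset\Theta$ and choose $R>0$ such that a neighborhood of the form $K+Q_{4R}$ still lies inside $\Theta$. Rescaling each $u_i$ to a function on $Q_4$ and applying Corollary \ref{cor:holderest} gives a uniform parabolic H\"older estimate on $K$, with constants depending only on $n$, $p$, $q$, $R$ and the uniform bound $\sup_i\norm{u_i}_{L^\infty(K+Q_{4R})}$. Arzel\`a--Ascoli then produces a subsequence converging uniformly on $K$, and a standard diagonal argument along an exhaustion $K_j\uparrow\Theta$ yields a subsequence, still denoted $(u_i)$, converging locally uniformly on $\Theta$ to some $u\in C(\Theta)$.

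Next I would show $u$ is a supersolution; the subsolution case is symmetric. Fix $\xi_0=(x_0,t_0)\in\Theta$ and an admissible $\vp\in C^{2}(\Theta)$ touching $u$ from below at $\xi_0$. After subtracting an admissibility-preserving perturbation of the form $\eps\bigl(\abs{x-x_0}^{2k}+(t-t_0)^{2}\bigr)$ with $2k>\max(q/(q-1),2)$, one may assume the touching is strict. Local uniform convergence then forces $u_i-\vp$ to attain a local minimum at points $\xi_i\to\xi_0$, and the supersolution property of each $u_i$ yields $\partial_t\vp(\xi_i)\geq F(\nabla\vp(\xi_i),D^{2}\vp(\xi_i))$ when $\nabla\vp(\xi_i)\neq 0$ and $\partial_t\vp(\xi_i)\geq 0$ otherwise. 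If $\nabla\vp(\xi_0)\neq 0$, the inequality passes directly to the limit by continuity of $F$ away from the origin, after which $\eps\to 0$ finishes.

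The main obstacle is the critical-point case $\nabla\vp(\xi_0)=0$ with $\nabla\vp(\xi_i)\neq 0$ along a subsequence, since $F$ is singular at zero gradient when $q<2$. The resolution is precisely the admissibility bound of Definition \ref{def:admissible}: the envelope $f(\abs{x-x_0})+\sigma(t-t_0)$ on $\vp-\vp(\xi_0)-\partial_t\vp(\xi_0)(t-t_0)$, combined with the requirement $\lim_{x\to 0}F(\nabla g,D^{2}g)=0$ for the radial comparison function $g(x)=f(\abs{x})$, forces $\limsup_i F(\nabla\vp(\xi_i),D^{2}\vp(\xi_i))\leq 0$, giving $\partial_t\vp(\xi_0)\geq 0$ in the limit. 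This is the same stability mechanism already codified in Theorem 6.1 of \cite{Ohnuma1997}, which I would follow to make the estimate rigorous.
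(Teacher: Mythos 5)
Your extraction step (H\"older estimate from Corollary \ref{cor:holderest}, Arzel\`a--Ascoli, diagonalization over an exhaustion) is essentially identical to the paper's. Where you diverge is in showing that the limit is a viscosity solution. The paper avoids the test-function analysis entirely: for a small cylinder $B_{t_1,t_2}\Subset\Theta$, it takes the unique viscosity solution $v$ with boundary data $u$ on $\partial_p B_{t_1,t_2}$ (Theorem \ref{thm:existball}), observes that $v-\eps < u_k < v+\eps$ on $\partial_p B_{t_1,t_2}$ for $k$ large by uniform convergence, applies the comparison principle (Theorem \ref{thm:comp}) to get $v-\eps\le u_k\le v+\eps$ inside, and lets $k\to\infty$, $\eps\to 0$ to conclude $u=v$. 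Uniqueness of $v$ does the rest. This is a ``squeeze by a known solution'' argument that completely bypasses the critical-point subtleties of the operator. You instead run the direct stability argument with test functions and attack the critical-gradient case head on, deferring to Theorem 6.1 of \cite{Ohnuma1997} for the delicate estimate. Both routes are valid --- indeed the paper itself notes the lemma follows from that Ohnuma--Sato result --- but the paper's choice trades a reliance on the existence and uniqueness machinery for not having to re-derive the critical-point stability.

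One place where your sketch is looser than it should be: your claim that the admissibility envelope ``forces $\limsup_i F(\nabla\vp(\xi_i),D^{2}\vp(\xi_i))\le 0$'' does not follow directly from the bound $\abs{\vp-\vp(\xi_0)-\partial_t\vp(\xi_0)(t-t_0)}\le f(\abs{x-x_0})+\sigma(t-t_0)$, since that bound controls $\vp$ itself, not $F$ evaluated on its derivatives. Moreover, if some of the minimum points $\xi_i\ne\xi_0$ happen to satisfy $\nabla\vp(\xi_i)=0$, you need $\vp$ to be admissible \emph{at $\xi_i$} to invoke the supersolution property of $u_i$ there, and admissibility at $\xi_0$ does not automatically propagate to nearby critical points. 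The way Ohnuma--Sato actually resolve this is by replacing $\vp$ with the explicit radial envelope as a substitute test function, not by bounding $F$ along the original $\vp$; the perturbation $\eps(\abs{x-x_0}^{2k}+(t-t_0)^2)$ you add helps ensure strictness of the touching but does not by itself eliminate spurious critical points of the perturbed test function. Since you explicitly defer to Ohnuma--Sato Theorem 6.1 for the rigorous version, this is an acknowledged gap rather than an error, but the heuristic as stated would not survive unpacking.
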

\begin{proof}
	The proof is based on a diagonalization argument.
	Let $(K_i)_{i=1}^\infty$ be a sequence of compact sets in $\Theta$ such that $K_i\subset K_{i+1}$ for all $i$ and
	\begin{equation*}
		\bigcup_{i=1}^\infty K_i=\Theta.
	\end{equation*} Let $\Xi_i=\{\xi_1,\xi_2,\dots\}$ be the set of points with rational coordinates in $K_i$ and define
	\begin{equation*}
		d_i=\frac{d(K_i,\partial\Theta)}{5}
	\end{equation*} for every $i$. For each $i$, define the family of sets
	\begin{equation*}
		U_i=\{\xi_j+Q_{d_i}\mid \xi_j\in\Xi_i\}=\{B_{d_i}(x_j)\times(t_j-d_i^2,0]\mid (x_j,t_j)\in\Xi_i\}.
	\end{equation*}The family $U_i$ is a countable cover of $K_i$ and by compactness and construction has a finite subcover $V_i$ formed over some finite index set $Z_i\subset\Xi_i$.
	
	Because each $u_k$ is a viscosity solution in each $\xi_j+Q_{d_i}$ and we chose $d_i$ to have enough space around the set, we can use Hölder continuity result Corollary \ref{cor:holderest} to get the estimate
	\begin{equation}
		\label{eq:bddseq1}
		\abs{u_k(x,t)-u_k(y,t)}\leq C\abs{x-y}^\alpha
	\end{equation}
	for any $(x,t),(y,t)\in \xi_j+Q_{d_i}$, $\alpha_{\xi_j}\in(0,1)$ and $k\in\N$, where $C=C(n, p,q, \norm{u_k}_{L^\infty(\xi_j+Q_{4d_i})})$. Because we assume that the sequence $(u_i)_{i=1}^\infty$ is locally uniformly bounded, we can pick a constants $C$ independent of $k$ and by taking maximum over all such $C$ and $\alpha$, we get that
	\begin{equation}
		\label{eq:bddseq2}
		\abs{u_k(x,t)-u_k(y,t)}\leq \hat{C}\abs{x-y}^{\hat{\alpha}}
	\end{equation} holds for every $k$ and some $\hat{C}$ and $\hat{\alpha}$. This estimate now holds for every ${(x,t),(y,t)\in\bigcup_{\xi_j\in Z_i}\left(\xi_j+Q_{d_i}\right)}$ so especially in $\overbar{K}_i$. Similarly by using Corollary \ref{cor:holderest} on each set and picking constants gets us the estimate\sloppy
	\begin{equation}
		\label{eq:bddseq3}
		\abs{u_k(x,t)-u_k(x,s)}\leq \hat{C}\abs{t-s}^{\half}
	\end{equation}for every $(x,t),(x,s)\in\bigcup_{\xi_j\in Z_i}\left(\xi_j+Q_{d_i}\right)$ so especially again in $\overbar{K}_i$. 
	
	Estimates \eqref{eq:bddseq2} and \eqref{eq:bddseq3} give us that the sequence $(u_k)_{k=1}^\infty$ is equicontinuous with respect to both space and time in $\overbar{K}_i$. Let $(u_k^i)_{k=1}^\infty$ be the subsequence given by Arzelà-Ascoli theorem that converges into a continuous function $u^i$ in $K_i$. Define a new sequence $(v_k)_{k=1}^\infty$ such that $v_k=u_k^k$ for all $k$. Now $v_k$ has a subsequence that converges locally uniformly in $\Theta$ to some continuous function $u$.
	
	Let us show that $u$ is a viscosity solution. Let $B_{t_1,t_2}\Subset \Theta$ for a ball $B$. Let $v$ be a viscosity solution in $B_{t_1,t_2}$, continuous on $\overbar{B}_{t_1,t_2}$ and taking the boundary values $v=u$ on $\partial_{p}B_{t_1,t_2}$. Such $v$ exists by Theorem \ref{thm:existball}. 
	
	By convergence for any $\eps>0$ 
	\begin{equation*}
		v-\eps=u-\eps<u_k<u+\eps=v+\eps
	\end{equation*}
	on $\partial_{p}B_{t_1,t_2}$ for large enough $k$. By comparison principle Theorem \ref{thm:comp}, we get
	\begin{equation*}
		v-\eps\leq u_k\leq v+\eps
	\end{equation*}
	in $B_{t_1,t_2}$ for each large $k$ and thus taking the limit as $k\to\infty$ gives us
	\begin{equation*}
		v-\eps\leq u\leq v+\eps
	\end{equation*}
	in $B_{t_1,t_2}$. Letting $\eps\to0$ gives us that $u$ is a viscosity solution in $B_{t_1,t_2}$ because the solution $v$ given by Theorem \ref{thm:existball} is unique.
\end{proof}
Later we prove that Perron solutions are actually viscosity solutions and for this proof, we need the concept of parabolic modification. 
\begin{definition}
	\label{def:parmod}
	Let $B_{t_1,t_2}\Subset\Theta$ and $u$ be a viscosity supersolution to \eqref{eq:rgnppar} in $\Theta$ and bounded in $\Om_T$. We define the parabolic modification of $u$ in $B_{t_1,t_2}$ as
	\begin{equation*}
		U=\begin{cases}
			v &\text{ in }B_{t_1,t_2},\\
			u &\text{ in }\Theta\setminus B_{t_1,t_2},
		\end{cases}
	\end{equation*}
	where
	\begin{equation*}
		v(\xi)=\sup\{h(\xi)\mid h\in C(\overline{B}_{t_1,t_2}) \text{ is a viscosity solution to \eqref{eq:rgnppar} and } h\leq u \text{ on } \partial_{p} B_{t_1,t_2} \}.
	\end{equation*}
\end{definition}
Clearly $U\leq u$ in $\Theta$ because by comparison principle Theorem \ref{thm:comp}, each $h\leq u$ in $B_{t_1,t_2}$ and thus also $v\leq u$ in $B_{t_1,t_2}$.
\begin{lemma}
	\label{le:parmod}
	Let $B_{t_1,t_2}\Subset\Theta$ and $u$ be a viscosity supersolution to \eqref{eq:rgnppar} in $\Theta$ and bounded in $B_{t_1,t_2}$. Then the parabolic modification $U$ is a viscosity supersolution in $\Theta$ and a viscosity solution in $B_{t_1,t_2}$. 
\end{lemma}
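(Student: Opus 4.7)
The plan is to prove the statement in two installments: first, that $U$ agrees with a continuous viscosity solution on the inner cylinder $B_{t_1,t_2}$, and second, that the glued function $U$ inherits the supersolution property across the seam $\partial B_{t_1,t_2} \cap \Theta$. The interior part is the classical Perron construction adapted to this equation, while the boundary gluing reduces to the elementary observation $\varphi(\xi)=U(\xi)=u(\xi)$ for seam points.

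For the interior part I would follow the standard Perron scheme. Each admissible function $h$ in the supremum defining $v$ is in particular a viscosity subsolution, and the upper semicontinuous envelope of a pointwise supremum of subsolutions is again a subsolution; hence $v^{*}$ is a viscosity subsolution in $B_{t_1,t_2}$. To upgrade $v$ to a supersolution I argue by contradiction. If the lower semicontinuous envelope $v_{*}$ fails the supersolution inequality at some $\xi_0 = (x_0,t_0) \in B_{t_1,t_2}$, witnessed by an admissible $\psi$, I would work on a small cylinder $Q = B_r(x_0) \times (t_0 - r^2, t_0 + r^2) \Subset B_{t_1,t_2}$, approximate $v_{*}\big|_{\partial_p Q}$ monotonically from below by continuous data $\psi_k$, and apply Theorem \ref{thm:existball} (after translation and rescaling) to obtain continuous viscosity solutions $w_k$ on $\overline{Q}$. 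The elliptic-type/parabolic comparison (Theorem \ref{thm:comp}) gives $w_k \leq v$ in $Q$, while the strict defect of $\psi$ at $\xi_0$ allows me to perturb $w_k$ by a small bump to construct a new competitor for the Perron family that strictly exceeds $v$ at $\xi_0$, contradicting maximality. The interior comparison then forces $v^{*} \leq v_{*}$, so $v$ is continuous and a viscosity solution in $B_{t_1,t_2}$.

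For the supersolution property on all of $\Theta$, there is nothing to prove on $B_{t_1,t_2}$ (where $U=v$ is a solution, hence a supersolution) or on the open complement $\Theta \setminus \overline{B}_{t_1,t_2}$ (where $U=u$ is already a supersolution). The only case is a point $\xi \in \partial B_{t_1,t_2} \cap \Theta$. Since $B_{t_1,t_2}$ is open, $\xi$ lies in $\Theta \setminus B_{t_1,t_2}$ and so $U(\xi) = u(\xi)$; combined with the remark in the excerpt that $U \leq u$ on $\Theta$, any admissible test function $\varphi$ touching $U$ from below at $\xi$ also satisfies $\varphi \leq u$ locally around $\xi$ with $\varphi(\xi) = u(\xi)$. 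Thus $\varphi$ touches $u$ from below at $\xi$, and the required inequality
\[
\partial_t\varphi(\xi) - \Delta_p^q \varphi(\xi) \geq 0 \quad (\text{or } \partial_t \varphi(\xi)\geq 0 \text{ if } \nabla\varphi(\xi)=0)
\]
is inherited from the supersolution property of $u$. Lower semicontinuity of $U$ at the seam follows on the exterior side from lower semicontinuity of $u$ and on the interior side from the monotone approximation $h_k \nearrow v$ associated to continuous $\psi_k \nearrow u$ on $\partial_p B_{t_1,t_2}$, which yields $\liminf_{\zeta \to \xi,\, \zeta \in B_{t_1,t_2}} v(\zeta) \geq u(\xi)$.

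The main obstacle is the Perron push-up step inside $B_{t_1,t_2}$, because the restricted class of admissible test functions from Definition \ref{def:admissible} requires care at critical points of $\varphi$ where $\mathcal{F}(F)$ and $\Sigma$ enter, and because the boundary data of the inner Dirichlet problem on $Q$ are merely lower semicontinuous, so a continuous monotone approximation is needed before Theorem \ref{thm:existball} applies. Once this is in place, the gluing across $\partial B_{t_1,t_2}$ is routine because of the envelope inequality $U \leq u$ and the identity $U(\xi) = u(\xi)$ on the seam.
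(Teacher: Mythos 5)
Your gluing argument across the seam $\partial B_{t_1,t_2}$ is exactly the paper's: an admissible $\vp$ touching $U$ from below at $\xi\in\partial B_{t_1,t_2}$ satisfies $\vp(\xi)=U(\xi)=u(\xi)$ and $\vp\leq U\leq u$ near $\xi$, so it also touches $u$ from below and the supersolution inequality for $u$ transfers to $U$. That part is fine.

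The interior part, however, departs from the paper and has a genuine gap. The family over which $v$ is a supremum in Definition~\ref{def:parmod} consists of \emph{continuous viscosity solutions} $h$ with $h\leq u$ on $\partial_p B_{t_1,t_2}$, not of subsolutions. The classical ``Perron push-up'' you invoke constructs, from a touching test function with a strict defect, a bumped function that is merely a \emph{subsolution}; it is not a member of this family, so it cannot contradict the maximality of $v$. To rescue this you would have to first show that $v$ coincides with the lower Perron solution over the subsolution class and then that this Perron solution is itself a continuous viscosity solution --- which is roughly the content of Theorem~\ref{thm:perronarevisc}, proved later in the paper precisely via parabolic modifications, so the argument would become circular. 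A second gap is the assertion that ``interior comparison forces $v^{*}\leq v_{*}$''; comparison in $B_{t_1,t_2}$ only yields $v^{*}\leq v_{*}$ once one knows that $v^{*}$ does not exceed $v_{*}$ on $\partial_p B_{t_1,t_2}$, and since the boundary datum $u\vert_{\partial_p B_{t_1,t_2}}$ is only lower semicontinuous this boundary control is not automatic and would require a barrier argument. Finally, the envelope lemma (USC envelope of a sup of subsolutions is a subsolution) must be re-verified for the restricted admissible test-function class of Definition~\ref{def:admissible}; you flag this but do not resolve it.

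The paper's route avoids all three issues. Lower semicontinuity of $u$ lets one pick continuous $\theta_i\nearrow u$ on $\partial_p B_{t_1,t_2}$; Theorem~\ref{thm:existball} gives continuous solutions $h_i$ with these data; pairwise comparison (Theorem~\ref{thm:comp}) makes $(h_i)$ increasing with limit equal to $v$ (a Dini-type covering argument shows any competitor $h$ satisfies $h\leq h_i+\eps$ on $\partial_p$ for large $i$); and the stability Lemma~\ref{lem:bddseq} then yields that the limit is a viscosity solution, with continuity for free. You in fact mention this monotone approximation $h_k\nearrow v$ in your last paragraph --- used only for the semicontinuity of $U$ at the seam --- but it is exactly the tool that replaces the Perron bump and closes the interior step cleanly. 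I would restructure the proof to make that the main argument.
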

\begin{proof}
	Let $(\theta_i)_{i=1}^\infty$ be an increasing sequence of continuous functions on $\partial_{p}B_{t_1,t_2}$ such that
	\begin{equation*}
		u=\lim_{i\to\infty}\theta_i
	\end{equation*}
	on $\partial_{p}B_{t_1,t_2}$. By Theorem \ref{thm:existball}, there exists a sequence $(h_i)_{i=1}^\infty$ of viscosity solutions on $B_{t_1,t_2}$ such that $h_i$ coincides with $\theta_i$ on $\partial_{p}B_{t_1,t_2}$. Using the comparison principle Theorem \ref{thm:comp} pairwise for each $h_i$, we get that the sequence $(h_i)_{i=1}^\infty$ is increasing on $\overline{B}_{t_1,t_2}$ and that the limit function is $v$ in the definition of the parabolic modification. Moreover, since the sequence is bounded, the limit function $v$ is also a viscosity solution to \eqref{eq:rgnppar} in $B_{t_1,t_2}$ by Lemma \ref{lem:bddseq}. The function $U$ is a viscosity supersolution in $\Theta\setminus\overline{B}_{t_1,t_2}$ by definition, so only the boundary of two sets is left.
	
	Let $\vp$ be an admissible test function touching $U$ from below at $\xi\in\partial B_{t_1,t_2}$. We have $\vp(\xi)=U(\xi)$ and $\vp<U$ in some neighborhood $V$ of $\xi$. This $\vp$ is also an admissible test function touching $u$ from below at $\xi$ because $\vp(\xi)=U(\xi)=u(\xi)$ and $\vp(\zeta)<U(\zeta)\leq u(\zeta)$ for all $\zeta\in V$. Because $u$ is a viscosity supersolution in the entire $\Theta$, we necessarily have
	\[
	\begin{cases}
		\partial_{t}\vp(\xi)-\Delta_{p}^{q}\vp(\xi)\geq0 & \text{if }\nabla \vp(\xi)\not=0,\\
		\partial_{t}\vp(\xi)\geq0 & \text{if }\nabla \vp(\xi)=0.
	\end{cases}
	\]
	which now implies that condition (3) of Definition \ref{def:super} holds for $U$ at $\xi$. Thus $U$ is a viscosity solution in $B_{t_1,t_2}$ and a viscosity supersolution in the entire $\Theta$.
\end{proof}
%
\section{Elliptic-type comparison principle}
\label{sec:ecomp}
The comparison principle for general bounded open sets $\Theta$ has not been proven for equation \eqref{eq:rgnppar} before and we need it for a few proofs in this paper. Hence we will provide a proof using the doubling of variables method and the Theorem on sums which is the standard strategy often used to prove comparison principle for viscosity solutions of equations of this type. 

\begin{theorem}
	\label{thm:ecomp}
	Suppose that u is a viscosity supersolution and $v$ is a viscosity subsolution to \eqref{eq:rgnppar} in $\Theta$.  If
	$$
	\infty \neq \limsup _{\Theta \ni(y, s) \rightarrow(x, t)} v(y, s) \leq \liminf _{\Theta \ni(y, s) \rightarrow(x, t)} u(y, s) \neq-\infty
	$$
	for all $(x, t) \in \partial\Theta,$ then $v \leq u$ in $\Theta$.
\end{theorem}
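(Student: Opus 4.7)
The plan is to argue by contradiction via the doubling of variables method and the Crandall--Ishii theorem on sums, following the general framework of Ohnuma and Sato \cite{Ohnuma1997} for equations with singular diffusion. Suppose $\sup_\Theta (v - u) > 0$ for contradiction. Since $\Delta_p^q$ does not depend on $u$ itself but only on its derivatives, for any $\alpha > 0$ the function $\tilde v(x,t) := v(x,t) - \alpha t$ is a \emph{strict} subsolution, meaning that whenever an admissible test function $\vp$ touches $\tilde v$ from above at $\xi$, one has $\partial_t \vp(\xi) - \Delta_p^q \vp(\xi) \leq -\alpha$ if $\nabla \vp(\xi) \neq 0$ and $\partial_t\vp(\xi) \leq -\alpha$ otherwise. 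Choosing $\alpha$ small enough preserves $\sup_\Theta(\tilde v - u) > 0$, and the boundary hypothesis, applied to the USC/LSC envelopes of $\tilde v$ and $u$ on $\overline\Theta$, forces this positive supremum to be attained at some interior point $(\hat x, \hat t) \in \Theta$.

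Fix an even integer $\beta > \max\{q/(q-1),\, 2\}$. As noted after Definition \ref{def:admissible}, this threshold ensures $\tfrac{1}{\beta}|x|^\beta$ lies in $\mathcal{F}(F)$, and it also makes the penalization below $C^2$ globally. For $j \in \N$ consider
\begin{equation*}
	\Phi_j(x,t,y,s) = \tilde v(x,t) - u(y,s) - \tfrac{j}{\beta}|x-y|^\beta - \tfrac{j}{2}(t-s)^2
\end{equation*}
on $\overline\Theta \times \overline\Theta$; after replacing $\tilde v$ and $u$ by their semicontinuous envelopes, $\Phi_j$ is upper semicontinuous on the compact set and attains a maximum at some $(x_j, t_j, y_j, s_j)$. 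A standard penalty argument yields $\tfrac{j}{\beta}|x_j - y_j|^\beta + \tfrac{j}{2}(t_j - s_j)^2 \to 0$ as $j \to \infty$ and convergence of the four coordinates to the same interior point $(\hat x, \hat t)$, so for $j$ large all four evaluations lie inside $\Theta$.

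Applying the theorem on sums to the smooth penalization produces a common time derivative $a = j(t_j - s_j)$, a common spatial vector $\eta_j = j|x_j - y_j|^{\beta-2}(x_j - y_j)$, and symmetric matrices $X \leq Y$ associated to $\tilde v$ at $(x_j,t_j)$ and to $u$ at $(y_j,s_j)$. If $x_j \neq y_j$, then $\eta_j \neq 0$ and the strict subsolution and supersolution inequalities combined with degenerate ellipticity of $F$ give
\begin{equation*}
	\alpha \leq \bigl(a - F(\eta_j,Y)\bigr) + \bigl(F(\eta_j,X) - a\bigr) = F(\eta_j,X) - F(\eta_j,Y) \leq 0,
\end{equation*}
a contradiction. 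If instead $x_j = y_j$, then the test functions $\phi_1(x,t) = \tfrac{j}{\beta}|x - y_j|^\beta + \tfrac{j}{2}(t - s_j)^2$ touching $\tilde v$ from above at $(x_j,t_j)$ and $\phi_2(y,s) = -\tfrac{j}{\beta}|x_j - y|^\beta - \tfrac{j}{2}(t_j - s)^2$ touching $u$ from below at $(y_j,s_j)$ have vanishing spatial gradient and are admissible in the sense of Definition \ref{def:admissible} with $f(r) = \tfrac{j}{\beta}r^\beta \in \mathcal{F}(F)$ and $\sigma(\tau) = \tfrac{j}{2}\tau^2 \in \Sigma$. The vanishing-gradient clauses in the super/subsolution definitions then produce $j(t_j - s_j) \leq -\alpha$ and $j(t_j - s_j) \geq 0$ simultaneously, which is impossible.

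The main obstacle is the degenerate case $x_j = y_j$, where the operator $F$ is not directly accessible and one must appeal to the admissible test function machinery rather than to the equation itself. This is precisely what forces the spatial penalty exponent to exceed $q/(q-1)$: the penalization must itself be admissible at its critical point. Verifying this compatibility between the singular structure of $\Delta_p^q$ near critical points and the doubling-of-variables scheme is the content that distinguishes the argument from the standard uniformly elliptic comparison proof.
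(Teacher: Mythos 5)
Your proposal is correct and follows the same overall strategy as the paper (doubling of variables, theorem on sums, splitting into the cases $x_j \neq y_j$ and $x_j = y_j$, and treating the latter via the admissibility framework), with one worthwhile structural difference in how strictness is introduced. The paper perturbs the penalty function by $\frac{\gamma}{T-t}+\frac{\gamma}{T-s}$ and must then prove a separate lemma (Lemma \ref{le:admi}) showing that adding $\frac{\gamma}{T-t}$ preserves admissibility of test functions at critical points, plus a localization lemma (Lemma \ref{le:inte}). You instead perturb the subsolution itself to $\tilde v = v - \alpha t$. Since $\alpha t$ is affine in $t$ and independent of $x$, the admissibility of $\vp + \alpha t$ at a point with $\nabla\vp=0$ is immediate from that of $\vp$ (the additive term cancels in the defining inequality of Definition \ref{def:admissible}), so the analogue of Lemma \ref{le:admi} is trivial; this is a mild simplification. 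The trade-off is that you must check $\alpha$ is small enough that $\sup(\tilde v - u)$ is still positive \emph{and} strictly exceeds the boundary ceiling $\alpha\sup_\Theta|t|$ so the maximum stays interior — this is what your phrase ``forces this positive supremum to be attained at some interior point'' is compressing, and it deserves the explicit estimate $\tilde v^* - u_* \leq -\alpha t \leq \alpha\sup_\Theta|t|$ on $\partial\Theta$ together with the choice $\alpha < \theta/(2\sup_\Theta|t|)$. Your choice of spatial penalty $\frac{j}{\beta}|x-y|^\beta$ with $\beta > \max\{q/(q-1),2\}$ matches the paper's requirement that the penalty lie in $\mathcal{F}(F)$, and your split into the degenerate and nondegenerate cases, the use of the vanishing-gradient clause when $x_j=y_j$, and the algebra yielding $\alpha \leq F(\eta_j,X)-F(\eta_j,Y)\leq 0$ in the other case are all sound.
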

Before the proof, we will define notation and prove some lemmas we need.

\begin{lemma}
	\label{le:admi}
	Assume $\vp\in C^2(\Theta)$ is an admissible test function at $(x_0,t_0)\in\Theta$ and let $T=\sup\{t\in\R\mid (x,t)\in\Theta\}$. If $\nabla\vp(x_0,t_0)=0$, then
	\begin{equation*}
		\psi(x,t)=\vp(x,t)+\frac{\gamma}{T-t}
	\end{equation*}
	is also admissible at $(x_0,t_0)$ for all $\gamma>0$.
\end{lemma}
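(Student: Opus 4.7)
The plan is to verify the second clause of Definition \ref{def:admissible} directly for $\psi$, since the first clause is unavailable: the added term $\gamma/(T-t)$ depends only on $t$, so $\nabla \psi(x_0,t_0) = \nabla \vp(x_0,t_0) = 0$. The strategy is to reuse the same $f \in \mathcal{F}(F)$ provided by the admissibility of $\vp$ and absorb the new purely temporal contribution into an enlarged function in $\Sigma$.

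The first step is to split the linearization error additively. Setting $g(t) := \gamma/(T-t)$, compute
\begin{equation*}
\psi(x,t) - \psi(x_0,t_0) - \partial_t \psi(x_0,t_0)(t-t_0) = A(x,t) + B(t),
\end{equation*}
where $A(x,t) = \vp(x,t) - \vp(x_0,t_0) - \partial_t \vp(x_0,t_0)(t-t_0)$ and $B(t) = g(t) - g(t_0) - g'(t_0)(t-t_0)$. By the admissibility of $\vp$, there exist $\delta_1 > 0$, $f \in \mathcal{F}(F)$, and $\sigma \in \Sigma$ such that $|A(x,t)| \leq f(|x-x_0|) + \sigma(t-t_0)$ on $B_{\delta_1}(x_0) \times (t_0-\delta_1, t_0+\delta_1)$.

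The second step is to estimate $B$. Because $\Theta$ is open and $(x_0,t_0) \in \Theta$, a small ball around $(x_0,t_0)$ lies in $\Theta$, forcing $t_0 < T$ strictly; hence $g$ is smooth near $t_0$. A second-order Taylor expansion with remainder gives, for $|t-t_0| < \delta_2 := (T-t_0)/2$,
\begin{equation*}
|B(t)| \leq \tfrac{1}{2} \sup_{|\tau-t_0| \leq \delta_2} |g''(\tau)|\,(t-t_0)^2 \leq C(t-t_0)^2,
\end{equation*}
where $C = C(\gamma, T, t_0)$ (explicitly one may take $C = 8\gamma/(T-t_0)^3$).

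Finally, define $\widetilde\sigma(s) := \sigma(s) + Cs^2$ and $\delta := \min(\delta_1, \delta_2)$. Then $\widetilde\sigma$ is even, $C^1$, satisfies $\widetilde\sigma(0) = \widetilde\sigma'(0) = 0$, and is strictly positive for $s \neq 0$, so $\widetilde\sigma \in \Sigma$. Combining the two bounds yields $|A(x,t) + B(t)| \leq f(|x-x_0|) + \widetilde\sigma(t-t_0)$ on $B_\delta(x_0) \times (t_0-\delta, t_0+\delta)$, proving admissibility of $\psi$ at $(x_0,t_0)$. I do not anticipate a serious obstacle; the only point requiring care is the strict inequality $t_0 < T$, and that is automatic from $\Theta$ being open.
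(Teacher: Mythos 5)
Your proof is correct and follows essentially the same route as the paper: decompose the linearization error of $\psi$ additively into the error for $\vp$ (handled by its admissibility) plus the second-order Taylor remainder of $\gamma/(T-t)$, and absorb the latter into a new $\Sigma$-function $\widetilde\sigma(s) = \sigma(s) + Cs^2$. The paper uses the same decomposition and the Lagrange form of the Taylor remainder, so there is no substantive difference; your explicit remark that openness of $\Theta$ forces $t_0 < T$ (hence smoothness of $g$ near $t_0$) is a point the paper also implicitly relies on.
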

\begin{proof}
	We have $\partial_t\psi(x,t)=\partial_{t}\vp(x,t)+\frac{\gamma}{(T-t)^2}$. Because we assumed that $\vp$ is admissible at $(x_0,t_0)$, there exists a $\delta>0$, $f\in\mathcal{F}(F)$ and $\sigma_1\in\Sigma$ such that
	\begin{align*}
		\label{eq:admi1}
		&\abs{\psi(x,t)-\psi(x_0,t_0)-\partial_t\psi(x_0,t_0)(t-t_0)}\\&=\abs{\vp(x,t)+\frac{\gamma}{T-t}-\vp(x_0,t_0)-\frac{\gamma}{T-t_0}-\left(\partial_{t}\vp(x_0,t_0)+\frac{\gamma}{(T-t_0)^2}\right)(t-t_0)} \\
		&\leq f(\abs{x-x_0})+\sigma_1(t-t_0)+\abs{\frac{\gamma}{T-t}-\frac{\gamma}{T-t_0}-\frac{\gamma (t-t_0)}{(T-t_0)^2}}\\
		&\leq f(\abs{x-x_0})+\sigma_1(t-t_0)+\abs{h(t)-h(t_0)-h'(t_0)(t-t_0)},\numberthis
	\end{align*}
	for all $(x,t)\in B_{\delta}(x_0)\times (t_0-\delta, t_0+\delta)\subset\Theta$. Here
	\begin{equation*}
		h(t)=\frac{\gamma}{T-t}, 
	\end{equation*}which is smooth for $t\in(t_0-\delta, t_0+\delta)$ as this interval does not contain $T$. By Taylor's theorem using the Lagrange form for the remainder, there exists $c\in(t,t_0)$ such that
	\begin{equation*}
		h(t)=h(t_0)+h'(t_0)(t-t_0)+\frac{h''(c)}{2}(t-t_0)^2.
	\end{equation*}
	Because $h''(x)$ is bounded in $(t_0-\delta, t_0+\delta)$, we can estimate the last term of \eqref{eq:admi1} by
	\begin{equation*}
		\abs{h(t)-h(t_0)-h'(t_0)(t-t_0)}=\abs{\frac{h''(c)}{2}(t-t_0)^2}\leq \sup_{c\in(t_0-\delta, t_0+\delta)}h''(c)(t-t_0)^2=\sigma_2(t-t_0).
	\end{equation*}
	This $\sigma_2$ is even, satisfies $\sigma_2(0)=\sigma_2'(0)=0$ and $\sigma_2(r)>0$ for all $r>0$ and thus $\sigma_2\in\Sigma$. Combining this with estimate \eqref{eq:admi1}, we have
	\begin{equation}
		\abs{\psi(x,t)-\psi(x_0,t_0)-\partial_t\psi(x_0,t_0)(t-t_0)}\leq f(\abs{x-x_0})+\sigma(t-t_0),
	\end{equation}
	for all $(x,t)\in B_\delta(x_0)\times (t_0-\delta, t_0+\delta)$. Here $f\in\mathcal{F}(F)$ and $\sigma=\sigma_1+\sigma_2\in\Sigma$ and thus $\psi$ is admissible at point $(x_0,t_0)$.
\end{proof}
Next we will define some notation used in the proof of the comparison principle. Let \sloppy ${T=\sup\{t\in\R\mid (x,t)\in\Theta\}}$, $\eps>0$, $\gamma>0$ and $f\in\mathcal{F}(F)$ and define
\begin{equation}
	\label{eq:ecompphi}
	\Phi(x,t,y,s)=v(x,t)-u(y,s)-\Psi(x,t,y,s)
\end{equation}
where
\begin{equation}
	\label{eq:ecomppsi}
	\Psi(x,t,y,s)=\frac{1}{\eps}f(\abs{x-y})+\frac{1}{\eps}(t-s)^{2}+\frac{\gamma}{T-s}+\frac{\gamma}{T-t}.
\end{equation}
By our assumptions for $u$ and $v$, the function $v(x,t)-u(y,s)$ is upper semicontinuous and bounded from above by some constant $M$ in $\overline{\Theta}\times\overline{\Theta}$. Thus it attains its maximum in this set and by continuity of $\Psi$, so does the function $\Phi$.
\begin{lemma}
	\label{le:inte}
	Let $\xi_\eps=(x_\eps,t_\eps,y_\eps,s_\eps)$ be the point where $\Phi$ attains its maximum in $\overline{\Theta}\times\overline{\Theta}$ and assume $v(x_\eps,t_\eps)-u(x_\eps,t_\eps)=\theta>0$. Then there exists a constant $\gamma_0>0$, such that
	\begin{equation}
		\lim_{\eps\to0}\abs{x_\eps-y_\eps}=0 \quad \text{ and }\quad  \lim_{\eps\to0}\abs{t_\eps-s_\eps}=0
	\end{equation} for all $\gamma<\gamma_0$. There also exists a constant $\eps_0=\eps_0(\gamma_0)>0$, such that $\xi_\eps\in\Theta\times\Theta$ for all $\eps<\eps_0$.
\end{lemma}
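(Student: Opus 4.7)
The plan is to run the standard doubling-of-variables penalization argument. First I would compare the value of $\Phi$ at its maximizer $\xi_\eps$ to the diagonal value $\Phi(x_0,t_0,x_0,t_0)$, reading the hypothesis as the existence of a fixed point $(x_0,t_0)\in\Theta$ with $v(x_0,t_0)-u(x_0,t_0)=\theta>0$. This comparison immediately gives
\begin{equation*}
	v(x_\eps,t_\eps)-u(y_\eps,s_\eps)-\Psi(\xi_\eps)\;\geq\;\theta-\frac{2\gamma}{T-t_0},
\end{equation*}
so fixing $\gamma_0:=\theta(T-t_0)/4$ produces, for every $\gamma<\gamma_0$, the key lower bound $\Phi(\xi_\eps)\geq\theta/2>0$.

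Next I would rearrange, using only the non-negativity of the two $\gamma/(T-\cdot)$ terms, to isolate the first two terms of $\Psi$:
\begin{equation*}
	\frac{1}{\eps}f(\abs{x_\eps-y_\eps})+\frac{1}{\eps}(t_\eps-s_\eps)^2\;\leq\;v(x_\eps,t_\eps)-u(y_\eps,s_\eps)-\theta+\frac{2\gamma}{T-t_0}\;\leq\;C(\gamma_0),
\end{equation*}
with $C(\gamma_0)$ independent of $\eps$, since $v-u$ is bounded above on $\overline{\Theta}\times\overline{\Theta}$ by upper semicontinuity together with the boundary hypothesis. Then $(t_\eps-s_\eps)^2=O(\eps)$ immediately yields the second limit, and $f(\abs{x_\eps-y_\eps})=O(\eps)$ combined with the strict monotonicity of $f$ (a consequence of $f(0)=f'(0)=0$ together with $f''>0$ on $(0,\infty)$ in the definition of $\mathcal{F}(F)$) inverts to $\abs{x_\eps-y_\eps}\to0$.

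For the second claim, I would argue by contradiction along a subsequence $\xi_{\eps_k}\to\xi_*=(x_*,t_*,y_*,s_*)\notin\Theta\times\Theta$. The preceding paragraph forces $x_*=y_*$ and $t_*=s_*$. The case $t_*=T$ is automatically excluded, since the terms $\gamma/(T-t_\eps)$ and $\gamma/(T-s_\eps)$ would then send $\Phi(\xi_{\eps_k})\to-\infty$ in contradiction with $\Phi(\xi_\eps)\geq\theta/2$. Otherwise $(x_*,t_*)\in\partial\Theta$, and the boundary hypothesis produces $\limsup_k(v(x_{\eps_k},t_{\eps_k})-u(y_{\eps_k},s_{\eps_k}))\leq0$, hence $\limsup_k\Phi(\xi_{\eps_k})\leq0$, the same contradiction. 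An $\eps_0=\eps_0(\gamma_0)>0$ then follows.

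The delicate point will be balancing the roles of the $\gamma$-penalty: small enough to keep $\theta-2\gamma/(T-t_0)$ positive, yet present so that its blowup at $t=T$ confines $\xi_\eps$ away from the top face. This is exactly the feature that allows the elliptic-type formulation to work on arbitrary open $\Theta$ rather than only on parabolic cylinders.
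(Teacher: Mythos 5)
Your argument is essentially the same as the paper's, and it is correct. You read the hypothesis (accurately, given a notational slip in the statement) as asserting a fixed point $(x_0,t_0)\in\Theta$ with $v(x_0,t_0)-u(x_0,t_0)=\theta>0$; you compare the maximal value of $\Phi$ to the diagonal value at this point to obtain $\Phi(\xi_\eps)\geq\theta/2$; you bound the penalization terms to get the two limits, inverting $f$ via its monotonicity; and you pass to a boundary limit point and invoke the boundary hypothesis for the contradiction. The one cosmetic difference is that you split off the case $t_*=T$ and dispose of it by the blowup of $\gamma/(T-\cdot)$. This is harmless but unnecessary (any such limit point already lies in $\partial\Theta$ since $T=\sup\{t:(x,t)\in\Theta\}$ and $\Theta$ is open, so the boundary hypothesis already covers it), and it would actually become delicate if one wants $\eps_0$ to work uniformly for all $\gamma<\gamma_0$ — the paper treats this by allowing $\gamma_i\to 0$ along the contradiction sequence, in which case the blowup need not occur. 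Relying solely on the boundary hypothesis, as the paper does, is cleaner.
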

\begin{proof}
	We will first show a lower bound for $\Phi(x_\eps,t_\eps,y_\eps,s_\eps)$. If we choose $\gamma_0$ to satisfy
	\begin{equation*}
		\frac{2\gamma_0}{T-t_0}\leq\frac{\theta}{2},
	\end{equation*}
	we have that
	\begin{equation}
		\label{eq:ecomp1}
		\Phi(x_\eps,t_\eps,y_\eps,s_\eps)\geq \Phi(x_\eps,t_\eps,x_\eps,t_\eps)=\theta-\frac{2\gamma}{T-t_0}\geq\frac{\theta}{2},
	\end{equation}
	for all $\gamma<\gamma_0$.
	 By equation \eqref{eq:ecomp1}, we also have
	\begin{equation}
		\label{eq:ecomp2}
		\Psi(x_\eps,t_\eps,y_\eps,s_\eps)< v(x_\eps,t_\eps)-u(y_\eps,s_\eps).
	\end{equation}
	Because we took $f\in \mathcal{F}(F)$, we know it is necessarily monotone increasing in $\R^+$ by definition. Thus there exists a monotone increasing inverse function $f^{-1}:\R^+\to\R^+$. Using this and the fact that
	\begin{equation*}
		f(\abs{x-y})\leq\eps\Psi(x,t,y,s),
	\end{equation*} we can conclude by inequality \eqref{eq:ecomp2}
	\begin{equation*}
		\abs{x_\eps-y_\eps}=f^{-1}\left(f(\abs{x_\eps-y_\eps})\right)\leq f^{-1}\left(\eps\Psi(x_\eps,t_\eps,y_\eps,s_\eps)\right)\leq f^{-1}\left(\eps M\right)
	\end{equation*}
	and
	\begin{equation*}
		\abs{t_\eps-s_\eps}\leq\left(\eps\Psi(x_\eps,t_\eps,y_\eps,s_\eps)\right)^{\frac{1}{2}}\leq \left(\eps M\right)^{\frac{1}{2}}.
	\end{equation*}
	Taking limits as $\eps\to0$ on both sides, these together imply
	\begin{equation}
		\label{eq:ecomp3}
		\lim_{\eps\to0}\abs{x_\eps-y_\eps}=0 \quad \text{ and }\quad  \lim_{\eps\to0}\abs{t_\eps-s_\eps}=0
	\end{equation} for all $\gamma<\gamma_0$.
	
	We have all the tools we need to show that $\xi_0\in\Theta\times\Theta$. Thriving for a contradiction, assume that $\eps_0$ stated in the theorem does not exist. Then necessarily there exists sequences $(\eps_i)_{i=1}^\infty$ and $(\gamma_i)_{i=1}^\infty\subset(0,\gamma_0)$, such that $\lim_{i\to\infty}\eps_i=0$ and $\Phi$ defined with $\eps=\eps_i$ and $\gamma=\gamma_i$ attains its maximum at a point $(x_i,t_i,y_i,s_i)\in\partial(\Theta\times\Theta)$. Because $\partial(\Theta\times\Theta)$ is compact and \eqref{eq:ecomp3} holds, there exists a point $(\hat{x},\hat{t})\in\partial\Theta$, such that
	\begin{equation*}
		\lim_{i\to\infty}x_i=\lim_{i\to\infty}y_i=\hat{x} \quad \text{ and }\quad \lim_{i\to\infty}t_i=\lim_{i\to\infty}s_i=\hat{t}
	\end{equation*}
	passing to a subsequence if necessary. But now estimate \eqref{eq:ecomp2} implies
	\begin{align*}
		0&<\limsup_{i\to \infty}\Psi(x_i,t_i,y_i,s_i)\\&\leq\limsup_{i\to \infty}(v(x_i,t_i)-u(y_i,s_i))\\&\leq\limsup_{i\to \infty}v(x_i,t_i)-\liminf_{i\to \infty}u(y_i,s_i)\\&\leq\limsup _{\Theta \ni(y, s) \rightarrow(\hat{x}, \hat{t})} v(y, s) - \liminf _{\Theta \ni(y, s) \rightarrow(\hat{x}, \hat{t})} u(y, s)\leq0,
	\end{align*}
	where we used our assumption about the functions on the boundary. This contradiction proves that there exists a $\eps_0>0$, such that $\xi_0\in\Theta\times\Theta$ for all $\eps<\eps_0$.
\end{proof}

Now we are ready to prove the elliptic-type comparison principle.
\begin{proof}[Proof of Theorem \ref{thm:ecomp}]
	Assume thriving for a contradiction that there exists a $(x_\eps,t_\eps)\in\Theta$, such that
	\begin{equation*}
		v(x_\eps,t_\eps)-u(x_\eps,t_\eps)=\theta>0.
	\end{equation*}
	Let $\Phi$ and $\Psi$ be defined as before in \eqref{eq:ecompphi} and \eqref{eq:ecomppsi} and let $\xi_\eps=(x_\eps,t_\eps,y_\eps,s_\eps)$ be the point where $\Phi$ attains its maximum in $\overline{\Theta}\times\overline{\Theta}$. Note that this point depends on $\eps$ and $\delta$. By Lemma \ref{le:inte}, there exists constants $\gamma_0$ and $\eps_0$ such that $\xi_\eps\in\Theta\times\Theta$ for all $\eps<\eps_0$ and
		\begin{equation}
		\lim_{\eps\to0}\abs{x_\eps-y_\eps}=0 \quad \text{ and }\quad  \lim_{\eps\to0}\abs{t_\eps-s_\eps}=0
	\end{equation} for all $\gamma<\gamma_0$.
	Let
	\begin{equation*}
		\vp^+(x,t)=\frac{1}{\eps}f(\abs{x-y_\eps})+\frac{1}{\eps}(t-s_\eps)^{2}+\frac{\gamma}{T-t},
	\end{equation*}
	and
	\begin{equation*}
	\vp^-(y,s)=-\frac{1}{\eps}f(\abs{x_\eps-y})-\frac{1}{\eps}(t_\eps-s)^{2}-\frac{\gamma}{T-s}.
	\end{equation*}
	
	For every $\eps<\eps_0$ and $\gamma<\gamma_0$, there are two possible cases. First if $x_\eps=y_\eps$, we have $\nabla\vp^+(x_\eps,t_\eps)=\nabla\vp^-(y_\eps,s_\eps)=0$. 
	These are admissible test functions at points $(x_\eps,t_\eps)$ and $(y_\eps,s_\eps)$ respectively by Lemma \ref{le:admi}. The function $\vp^+$, adding a constant if necessary, touches $v$ from above at $(x_\eps,t_\eps)$, and hence by the definition of a viscosity subsolution, we have
	\begin{equation*}
		\partial_t\vp^+(x_\eps,t_\eps)=\frac{2}{\eps}(t_\eps-s_\eps)+\frac{\gamma}{(T-t_\eps)^2}\leq 0.
	\end{equation*}
	Similarly, $\vp^-$ with a possible added constant touches $u$ from below at $(y_\eps,s_\eps)$, and hence by the definition of a viscosity supersolution, we have
	\begin{equation*}
		\partial_t\vp^-(y_\eps,s_\eps)=\frac{2}{\eps}(t_\eps-s_\eps)-\frac{\gamma}{(T-s_\eps)^2}\geq 0.
	\end{equation*}
	Hence
	\begin{align*}
		0\leq\partial_t\vp^-(y_\eps,s_\eps)-\partial_t\vp^+(x_\eps,t_\eps)&=-\frac{\gamma}{(T-s_\eps)^2}-\frac{\gamma}{(T-t_\eps)^2}<0,
	\end{align*}
	which is a contradiction.

	In the second case, we have $x_\eps\not=y_\eps$. For such $\eps$ and $\gamma$, we have $\nabla\vp^+(x_\eps,t_\eps)\not=0$ and $\nabla\vp^-(y_\eps,s_\eps)\not=0$. We denote parabolic superjet by $\mathcal{P}^{2,+}$ and subjet by $\mathcal{P}^{2,-}$ and their closures by $\mathcal{\overline{P}}^{2,-}$ and $\mathcal{\overline{P}}^{2,-}$ respectively. For definitions of these, we direct the reader to see \cite{Crandall1992}. We can use elliptic Theorem on sums in dimension $n+1$, see for example \cite[Lemma 3.6]{Koike2004}, with \cite[Lemma 3.5]{Ohnuma1997}. By this, we conclude that there exist matrixes $X,Y\in S^n$, such that
	\begin{equation*}
		(\partial_{t}\Psi(\xi_\eps),\nabla_x\Psi(\xi_\eps),X)\in\mathcal{\overline{P}}^{2,+}v(x_\eps,t_\eps) \text{ and } (-\partial_{s}\Psi(\xi_\eps),-\nabla_y\Psi(\xi_\eps),Y)\in\mathcal{\overline{P}}^{2,-}u(y_\eps,s_\eps),
	\end{equation*}
	and $X\leq Y$. Notice that because we assumed $x_\eps\not= y_\eps$, we have
	\begin{equation}
		\label{eq:ecomp4}
		\nabla_x\Psi(\xi_0)=-\nabla_y\Psi(\xi_0)\not=0
	\end{equation} and $F(\eta,X)$ is continuous in some neighborhood and we do not have to worry about the admissibility of test functions. Thus the viscosity solutions can be equivalently defined using semijets in this neighborhood, see \cite{Crandall1992}.  Since $u$ is a viscosity supersolution, this definition with \eqref{eq:ecomp4} implies
	\begin{align*}
		\label{eq:ecomp5}
		0&\leq-\partial_s\Psi(\xi_\eps)-F(-\nabla_y\Psi(\xi_\eps),Y)=-\left(-\frac{2}{\eps}(t_\eps-s_\eps)+\frac{\gamma}{(T-s_\eps)^2}\right)-F(\nabla_x\Psi(\xi_\eps),Y)
		\\&=\frac{2}{\eps}(t_\eps-s_\eps)-\frac{\gamma}{(T-s_\eps)^2}-F(\nabla_x\Psi(\xi_\eps),Y)\numberthis
	\end{align*}
	and because $v$ is a subsolution
	\begin{align}
		\label{eq:ecomp6}
		0\geq\partial_t\Psi(\xi_\eps)-F(\nabla_x\Psi(\xi_\eps),X)=\frac{2}{\eps}(t_\eps-s_\eps)+\frac{\gamma}{(T-t_\eps)^2}-F(\nabla_x\Psi(\xi_\eps),X).
	\end{align}
	Subtracting \eqref{eq:ecomp5} from \eqref{eq:ecomp6}, we get using degenerate ellipticity of $F$
	\begin{align*}
		0&\geq \frac{\gamma}{(T-t_\eps)^2}+\frac{\gamma}{(T-s_\eps)^2}+F(\nabla_x\Psi(\xi_\eps),Y)-F(\nabla_x\Psi(\xi_\eps),X)
		\\&\geq \frac{\gamma}{(T-t_\eps)^2}+\frac{\gamma}{(T-s_\eps)^2}
		\\&>0.
	\end{align*}
	Both cases lead to a contradiction and this concludes the proof.
\end{proof}
\section{Perron solutions}
\label{sec:perron}
One common way of solving the Dirichlet problem in arbitrary domains is the Perron method. For our uses, it is enough to consider bounded domains with bounded boundary data. The idea is to construct an upper solution to the Dirichlet problem as a point-wise infimum over a suitable class of supersolutions. We prove that for bounded boundary data, this construction gives us a viscosity solution inside the set. We use the notation from \cite{Bjorn2015} for Perron solutions.
\begin{definition}
	Let $f:\partial\Theta:\to\R$ be a bounded function. The upper class $\mathcal{U}_f$ is defined to be the class of all viscosity supersolutions $u$ to equation \eqref{eq:rgnppar} in $\Theta$ which are bounded from below and such that
	\begin{equation}
		\liminf_{\Theta\ni\eta\to\xi}u(\eta)\geq f(\xi) \quad \text{for all } \xi\in\partial\Theta.
	\end{equation}
	The upper Perron solution of $f$ is defined as
	\begin{equation*}
		\Hu f(\xi)=\inf_{u\in\mathcal{U}_f}u(\xi), \quad \xi\in\Theta.
	\end{equation*}
	Similarly, the lower class $\mathcal{L}_f$ is defined to be the class of all viscosity subsolutions $u$ to equation \eqref{eq:rgnppar} in $\Theta$ which are bounded from above and such that
	\begin{equation}
		\limsup_{\Theta\ni\eta\to\xi}u(\eta)\leq f(\xi) \quad \text{for all } \xi\in\partial\Theta,
	\end{equation}
	and define the lower Perron solution of $f$ by
	\begin{equation*}
		\Hl f(\xi)=\sup_{u\in\mathcal{L}_f}u(\xi),\quad \xi\in\Theta.
	\end{equation*}
\end{definition}
Note that $\mathcal{L}_f$ and $\mathcal{U}_f$ are always non-empty for bounded $f$. We in fact have $u\in\mathcal{U}_f$ for any constant function larger than $\sup_{\xi\in\partial\Theta}f(\xi)$ and similar result for the lower class. It is also clear that for bounded $f$, the definition of Perron solutions does not change if we restrict $\mathcal{L}_f$ and $\mathcal{U}_f$ only to bounded functions.

In the next theorem, we prove that Perron solutions are in fact viscosity solutions. This result is quite classical and a similar proof works for different equations as it only uses the stability result, parabolic modification, and basic properties of Perron solutions. We provide a proof for the convenience of the reader.
\begin{theorem}
	\label{thm:perronarevisc}
	If the boundary function $f:\partial\Theta\to\R$ is bounded, then Perron solutions $\Hu f$ and $\Hl f$ are viscosity solutions to \eqref{eq:rgnppar} in $\Theta$.
\end{theorem}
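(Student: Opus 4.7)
The plan is to show that $\Hu f$ is a viscosity solution in $\Theta$; the case of $\Hl f$ is symmetric. Since $\mathcal{U}_f$ contains every constant at least $\sup f$, it is nonempty, and comparison with such constants gives $\inf f \leq \Hu f \leq \sup f$. The argument proceeds in two stages: first establishing that $\Hu f$ is a viscosity supersolution, then using parabolic modification to upgrade to a full viscosity solution.

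For the supersolution property, I first record that $\mathcal{U}_f$ is closed under taking finite minima. Lower semicontinuity, the supersolution property and boundedness below all transfer immediately, and the boundary condition $\liminf u \geq f$ at $\partial\Theta$ survives by a pigeonhole argument: for any sequence $\eta_k \to \xi \in \partial\Theta$, the minimum is attained by the same $u_i$ along some subsequence, whose own liminf at $\xi$ is at least $f(\xi)$. Given an admissible test function $\vp$ touching $\Hu f$ from below at $\xi_0 \in \Theta$, I perturb $\vp$ (preserving admissibility via Lemma \ref{le:admi}) to make the touching strict, and select a decreasing sequence $u_n \in \mathcal{U}_f$ with $u_n(\xi_0) \to \Hu f(\xi_0)$. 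Then $u_n - \vp \geq 0$ with $u_n(\xi_0) - \vp(\xi_0) \to 0$, so its minimum over a small cylinder around $\xi_0$ is attained at a point $\xi_n$, forced by strictness to converge to $\xi_0$. Applying the supersolution property of $u_n$ at $\xi_n$, using $\vp$ shifted by the constant $u_n(\xi_n) - \vp(\xi_n)$ as the admissible test function, and passing to the limit, yields the supersolution inequality at $\xi_0$. In particular, $\Hu f$ is lower semicontinuous.

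For the solution property, fix $B = B_{t_1,t_2} \Subset \Theta$. For each $u \in \mathcal{U}_f$, the parabolic modification $U_u$ of $u$ in $B$ is, by Lemma \ref{le:parmod}, a viscosity supersolution in $\Theta$ and a viscosity solution in $B$, with $U_u \leq u$ and $U_u = u$ outside $B$; since $B \Subset \Theta$, $U_u$ inherits the boundary condition of $u$ on $\partial\Theta$ together with the lower bound, so $U_u \in \mathcal{U}_f$, and consequently $\Hu f = \inf_{u \in \mathcal{U}_f} U_u$. By a diagonal argument combined with the finite-minimum closure, I construct a decreasing sequence $u_n \in \mathcal{U}_f$ such that $U_n := U_{u_n}$ satisfies $U_n(\xi_k) \to \Hu f(\xi_k)$ at every point $\xi_k$ of a countable dense subset of $\overbar{B}$. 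Since the $U_n$ are uniformly bounded viscosity solutions in $B$, by Lemma \ref{lem:bddseq} and monotonicity they converge locally uniformly to a continuous viscosity solution $u^*$ in $B$. From $U_n \in \mathcal{U}_f$ one has $\Hu f \leq u^*$ throughout $B$, with equality on the dense subset. The parabolic comparison principle (Theorem \ref{thm:comp}) applied to the solution $u^*$ (hence subsolution) and the supersolution $\Hu f$ in $B$, whose boundary values on $\partial_p B$ match in the limsup/liminf sense thanks to the dense equality, the continuity of $u^*$ and the lower semicontinuity of $\Hu f$, then forces $u^* \leq \Hu f$ in $B$, and hence equality. Thus $\Hu f = u^*$ is a viscosity solution in $B$; since $B \Subset \Theta$ was arbitrary, $\Hu f$ is a viscosity solution in $\Theta$.

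The main obstacle will be the careful matching of boundary values of $u^*$ and $\Hu f$ on $\partial_p B$ required for the comparison step, which must be done by arranging the diagonal construction so that the dense subset reaches $\partial_p B$ and combining this with the lower semicontinuity of $\Hu f$ from the first stage and the continuity of $u^*$. A secondary delicate point is the admissibility of the shifted test function in the first stage when the gradient of $\vp$ vanishes at the touching point, which is handled by Lemma \ref{le:admi}.
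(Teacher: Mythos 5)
Your overall scaffolding (parabolic modification of elements of $\mathcal{U}_f$, diagonalization against a countable dense set, the stability Lemma~\ref{lem:bddseq}) matches the paper's strategy, and the observation that $U_u\in\mathcal{U}_f$ with $\Hu f=\inf U_u$ is exactly what is needed. However, the way you close the argument — comparison between $u^*$ and $\Hu f$ in $B$ — introduces two genuine gaps that the paper avoids.

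First, the comparison principle Theorem~\ref{thm:comp} requires $\Hu f$ to be a viscosity \emph{supersolution}, and in particular lower semicontinuous. Your Stage~1 establishes at best the test-function inequality (3) in Definition~\ref{def:super}; the final sentence ``in particular, $\Hu f$ is lower semicontinuous'' does not follow from that argument, because an infimum of lower semicontinuous functions need not be lower semicontinuous. (Indeed, the l.s.c.\ and continuity of $\Hu f$ are consequences of the theorem you are proving, not inputs to it.) There is also a smaller issue in Stage~1: to make the touching strict you need to subtract a bump of the form $\eps\rho(\cdot-\xi_0)$, and Lemma~\ref{le:admi} only covers the $\gamma/(T-t)$ perturbation; preserving admissibility at nearby points $\xi_n$ where $\nabla\tilde\vp$ may vanish is left unaddressed.

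Second, even granting the supersolution property, the boundary matching on $\partial_p B$ you flag as ``the main obstacle'' is a real obstruction. Local uniform convergence from Lemma~\ref{lem:bddseq} defines $u^*$ only on the open cylinder $B$; you have no control on $\limsup_{B\ni\zeta\to\xi}u^*(\zeta)$ as $\xi\in\partial_p B$, and density of $\Xi$ in $\overbar{B}$ does not by itself give $\limsup u^*\leq\liminf\Hu f$ there. The inequality you do control from $U_n\geq\Hu f$ and density goes in the \emph{wrong} direction ($u^*\geq\Hu f$).

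The paper sidesteps both problems with one move you do not use: for any $v\in\mathcal{U}_f$ let $V$ be its parabolic modification in $B$; then $V$ is continuous in $B$ and $V\geq\Hu f$, hence $V\geq h$ on the dense set $\Xi$, hence $V\geq h$ everywhere in $B$ by continuity of both $V$ and $h$. Taking the infimum over $v$ gives $\Hu f\geq h$ without invoking the comparison principle, and without needing any a priori regularity of $\Hu f$. If you replace your Stage~1 and the comparison step with this density-plus-continuity argument, your construction of $u^*$ becomes essentially the paper's proof.
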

\begin{proof}
	We mainly follow the argument in Kilpeläinen and Lindqvist in \cite[Theorem 5.1]{Kilpelainen1996}.
	Fix a space-time cylinder $B_{t_1,t_2}\Subset\Theta$. Choose a countable, dense subset
	\begin{equation*}
		\Xi=\{\xi_1,\xi_2,\dots\}
	\end{equation*}
	of $B_{t_1,t_2}$. For each $j=1,2,\dots$, we choose a sequence of functions $u_{i,j}\in\mathcal{U}_f$ with $i=1,2,\dots$, such that
	\begin{equation*}
		\lim_{i\to\infty} u_{i,j}(\xi_j)=\Hu f(\xi_j).
	\end{equation*}
	We may assume that each $u_{i,j}$ is bounded. Now define
	\begin{equation*}
		v_{i,j}(\xi)=\min_{1\leq m\leq j}\{u_{i,m}(\xi)\}
	\end{equation*}
	for each $j$ and $i$. The minimum of two viscosity supersolutions is also a viscosity supersolution by standard arguments and by iterating this, we get that each $v_{i,j}$ is a viscosity supersolution to $\eqref{eq:rgnppar}$ in $\Theta$ and $v_{i,j}\in\mathcal{U}_f$.
	By definition $v_{i,j}(\xi)\geq\Hu f(\xi)$ for all $i$ and $j$ and now by construction $v_{i,k}(\xi)\geq v_{i,j}(\xi)$ for each $k=1,2,\dots,j$.
	Thus for these indexes, we have
	\begin{equation*}
		\Hu f(\xi_k)\leq  v_{i,j}(\xi_k)\leq  v_{i,k}(\xi_k)
	\end{equation*} and taking limits as $i\to\infty$, we get that for any $j$, the sequence we now have satisfies
	\begin{equation}
	\label{eq:perronarevisc1}
		\lim_{i\to\infty} v_{i,j}(\xi_k)=\Hu f(\xi_k)
	\end{equation}
	for each $k=1,2,\dots,j$.
	
	Let $V_{i,j}$ be the parabolic modification of $v_{i,j}$ in $B_{t_1,t_2}$ according to Definition \ref{def:parmod}. Now 
	\begin{equation*}
		\Hu f\leq V_{i,j}\leq v_{i,j}
	\end{equation*}
	by definition and $V_{i,j}$ is a viscosity solution in $B_{t_1,t_2}$.
	
	By passing to a subsequence if necessary, we get from Lemma \ref{lem:bddseq} that for any $j$, the sequence $(V_{i,j})_{i=1}^\infty$ converges locally uniformly to a viscosity solution $v_j$ in $B_{t_1,t_2}$. Again by Lemma \ref{lem:bddseq}, the sequence $(v_j)_{j=1}^\infty$ has a subsequence that converges locally uniformly to a viscosity solution $h$ in $B_{t_1,t_2}$. By the construction, it holds
	\begin{equation*}
		h\geq\Hu f
	\end{equation*}
	in $B_{t_1,t_2}$ and by equation \eqref{eq:perronarevisc1}, the equality $h=\Hu f$ holds in the dense subset $\Xi\subset B_{t_1,t_2}$. Take $v\in\mathcal{U}_f$ and let $V$ be its parabolic modification in $B_{t_1,t_2}$. By definition $v\geq V$ and $V\geq \Hu f$ in $B_{t_1,t_2}$. Also because $h=\Hu f$ in a dense subset, we have by continuity of $V$ and $h$ $v\geq V\geq h$ in $B_{t_1,t_2}$ and thus taking infimum over all $v\in\mathcal{U}_f$, we get
	\begin{equation*}
		h\leq\Hu f
	\end{equation*}
	in $B_{t_1,t_2}$. It follows that $\Hu f=h$, so it is a viscosity solution for any cylinder $B_{t_1,t_2}$ and thus in $\Theta$. The lower Perron solution $\Hl f$ is treated analogously.
\end{proof}
To finish this section, we will prove the so-called pasting lemma that plays a key role in our proofs. It is kind of similar to the parabolic modification we used before but defined for arbitrary open sets. This is a useful tool when constructing suitable new viscosity supersolutions to be used as barriers.
\begin{lemma}{(Pasting lemma)}
	\label{le:pasting}
	Let $G\subset\Theta$ be open. Also let $u$ and $v$ be viscosity supersolutions to \eqref{eq:rgnppar} in $\Theta$ and $G$ respectively, and let
	\begin{equation*}
		w=\begin{cases}
			\min\{u,v\} & \text{ in }G,\\
			u			& \text{ in }\Theta\setminus G.
		\end{cases}
	\end{equation*}
	If $w$ is lower semicontinuous, then $w$ is a viscosity supersolution to \eqref{eq:rgnppar} in $\Theta$.
	\begin{proof}
		By assumption, $w$ is lower semicontinuous and by construction $w$ is finite in a dense subset of $\Theta$. Because the minimum of two viscosity supersolutions is a viscosity supersolution by standard arguments, we only need to verify the condition (3) of the Definition \ref{def:super} for $\xi\in\partial G$.
		
		Let $\vp$ be an admissible test function touching $w$ from below at $\xi\in\partial G$. We have $\vp(\xi)=w(\xi)$ and $\vp<w$ in some neighborhood $V$ of $\xi$. This $\vp$ is also an admissible test function touching $u$ from below at $\xi$ because $\vp(\xi)=w(\xi)=u(\xi)$ and $\vp(\zeta)<w(\zeta)\leq u(\zeta)$ for all $\zeta\in V$. Because $u$ is a viscosity supersolution, we necessarily have
		\[
		\begin{cases}
			\partial_{t}\vp(\xi)-\Delta_{p}^{q}\vp(\xi)\geq0 & \text{if }\nabla \vp(\xi)\not=0,\\
			\partial_{t}\vp(\xi)\geq0 & \text{if }\nabla \vp(\xi)=0.
		\end{cases}
		\]
		which now implies that condition (3) holds for $w$ at $\xi$. Thus $w$ is a viscosity supersolution in the entire $\Theta$.
	\end{proof}
\end{lemma}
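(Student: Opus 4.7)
The plan is to verify directly the three conditions of Definition \ref{def:super} for $w$. Lower semicontinuity is the hypothesis. Finiteness on a dense subset of $\Theta$ is inherited from $u$: on $\Theta\setminus G$ we have $w=u$, and in $G$ we have $w=\min\{u,v\}$, so wherever both $u$ and $v$ are finite, $w$ is finite as well; combined with the density assumptions on $u$ and $v$ this gives density in $\Theta$.

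For the pointwise supersolution condition I would split according to where the touching point $\xi\in\Theta$ lies. If $\xi\in G$, then $w=\min\{u,v\}$ in a full neighborhood of $\xi$, and I would invoke the standard fact that the minimum of two viscosity supersolutions is again a viscosity supersolution: an admissible test function touching $\min\{u,v\}$ from below at $\xi$ also touches whichever of $u$ or $v$ realizes the minimum at $\xi$ from below, so the defining inequality transfers. If $\xi$ lies in the interior of $\Theta\setminus G$, then $w=u$ in a neighborhood of $\xi$, and the inequality is immediate from $u$ being a supersolution in all of $\Theta$.

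The crucial case is $\xi\in\partial G\cap\Theta$. The key observation is that $w\leq u$ globally on $\Theta$: this is an equality on $\Theta\setminus G$, and in $G$ we have $w=\min\{u,v\}\leq u$. Since $G$ is open, $\xi\in\partial G$ belongs to $\Theta\setminus G$, so in particular $w(\xi)=u(\xi)$. Hence if $\vp$ is an admissible test function touching $w$ from below at $\xi$, meaning $\vp(\xi)=w(\xi)$ and $\vp\leq w$ on a neighborhood $V$ of $\xi$, then $\vp(\xi)=u(\xi)$ and $\vp\leq w\leq u$ on $V$, so $\vp$ also touches $u$ from below at $\xi$. Because admissibility in the sense of Definition \ref{def:admissible} is a property of $\vp$ alone and does not depend on which function it touches, $\vp$ remains admissible, and the supersolution inequality for $u$ at $\xi$ transfers verbatim to $w$.

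I do not expect a genuine obstacle here; the proof is bookkeeping built around the pointwise inequality $w\leq u$, which forces any lower test function for $w$ at a point of $\partial G$ to be a lower test function for $u$ at the same point. The lower semicontinuity hypothesis is used only to secure condition (1) of Definition \ref{def:super}, and is not actually invoked when handling the boundary case.
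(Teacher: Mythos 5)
Your proposal is correct and follows essentially the same route as the paper's proof: reduce to the three loci (interior of $G$, interior of $\Theta\setminus G$, and $\partial G\cap\Theta$), handle the first two by the standard min-of-supersolutions fact and by $w=u$, and for the boundary case use that $w\leq u$ with equality at $\xi\in\partial G\cap\Theta$, so any admissible test function touching $w$ from below at $\xi$ also touches $u$ from below there, and the supersolution inequality transfers. The only cosmetic difference is that you spell out the case split and the observation $w(\xi)=u(\xi)$ (via $G$ open implies $\partial G\subset\Theta\setminus G$) more explicitly than the paper does, and you could simplify the density-of-finiteness remark by just noting $w\leq u$.
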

\section{Barriers and boundary regularity}
\label{sec:barrier}
In this section, we define regular boundary points and barrier functions and prove how barriers can be used to characterize boundary regularity. It turns out that a boundary point is regular if and only if there exists a family of barrier functions. We show by a counterexample that a single barrier does not suffice when $q<2$ and this remains an open problem for $q>2$. This problem is open even for the usual $p$-parabolic equation where the barrier approach has been examined in \cite{Bjorn2015} and \cite{Kilpelainen1996}. We start with definitions.
\begin{definition}
	\label{def:reg}
	A boundary point $\xi_0\in\partial\Theta$ is \textit{regular} to equation \eqref{eq:rgnppar} if
	\begin{equation*}
		\liminf_{\Theta\ni\xi\to\xi_0}\Hu f(\xi)=f(\xi_0)
	\end{equation*}
	for every $f:C(\partial\Theta)\to\R$. If the set is ambiguous from the context, we will specify that a point is regular with respect to the set $\Theta$. 
\end{definition}
Since $\Hl f=-\Hu (-f)$, regularity can be equivalently defined using lower Perron solutions.

Next, we will define barriers and barrier families.
\begin{definition}
	Let $\xi_0\in\partial\Theta$. A function $w:\Theta\to(0,\infty]$ is a barrier to \eqref{eq:rgnppar} in $\Theta$ at point $\xi_0$ if
	\begin{itemize}
		\item[(a)] $w$ is a positive viscosity supersolution to equation \eqref{eq:rgnppar} in $\Theta$,
		\item[(b)] $\liminf_{\Theta\ni\zeta\to\xi_0}w(\zeta)=0$,
		\item[(c)]$\liminf_{\Theta\ni\zeta\to\xi}w(\zeta)>0 \text{ for }\xi\in\partial\Theta\setminus\{\xi_0\}$.
	\end{itemize}
	
\end{definition}
	We define barriers to be viscosity supersolutions which is not the standard definition in the recent literature, where barriers are often defined through the comparison principle. These two definitions are equivalent as we will prove in Lemma \ref{le:barrierdef}.
\begin{definition}
	\label{def:barfam}
	Let $\xi_0\in\partial\Theta$. A family of functions $w_j:\Theta\to(0,\infty]$, $j=1,2,\dots$, is a \textit{barrier family} to \eqref{eq:rgnppar} in $\Theta$ at point $\xi_0$ if for each $j$,
	\begin{itemize}
		\item[(a)] $w_j$ is positive viscosity supersolution to equation \eqref{eq:rgnppar} in $\Theta$,
		\item[(b)] $\liminf_{\Theta\ni\zeta\to\xi_0}w_j(\zeta)=0$,
		\item[(c)] for each $k=1,2,\dots$, there is a $j$ such that
		\begin{equation*}
			\liminf_{\Theta\ni\zeta\to\xi} w_j(\zeta)\geq k \quad \text{ for all }\xi\in\partial\Theta \text{ with } \abs{\xi-\xi_0}\geq \frac{1}{k}.
		\end{equation*}
	\end{itemize}
	We say that the family $w_j$ is a \textit{strong barrier family} in $\Theta$ at the point $\xi_0$ if,
	in addition, the following conditions hold:
	\begin{itemize}
		\item[(d)]$w_j$ is continuous in $\Theta$,
		\item[(e)]there is a non-negative function $d\in C(\overbar{\Theta})$, with $d(z)=0$ if and only if $z=\xi_0$ such that for each $k=1,2,\dots,$ there is a $j=j(k)$ such that $w_j\geq kd$ in $\Theta$.
	\end{itemize}
\end{definition}

In the following lemma, we will evaluate the operators for a prototype barrier function. Most barriers we use consist of sums of these functions and thus these formulas make our calculations in the proofs easier.
\begin{lemma}
	\label{le:calc}
	Let $C>0$, $\beta\in\R$ and
	\begin{equation*}
		v(x,t)=C\abs{x}^\frac{q}{q-1} t^\beta.
	\end{equation*}
	Then for all $t>0$, we have
	\begin{equation*}
	\Delta_{p}^{q}v(x,t)=\left(C\frac{q}{q-1}\right)^{q-1}\left(n+\frac{p-q}{q-1}\right)t^{\beta(q-1)}.
	\end{equation*}
	and
	\begin{equation*}
		\partial_{t}v(x,t)=C\beta\abs{x}^\frac{q}{q-1} t^{\beta-1}.
	\end{equation*}
\end{lemma}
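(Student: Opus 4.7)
The proof is a direct computation; there is no conceptual obstacle, so the plan is mostly to organize the algebra so that the cancellations are transparent.

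Set $\alpha = q/(q-1)$, so $v(x,t) = C|x|^\alpha t^\beta$. First I would compute the spatial derivatives at a point with $x\neq 0$. One has $\nabla v = C\alpha |x|^{\alpha-2}x\, t^\beta$, hence $|\nabla v| = C\alpha|x|^{\alpha-1}t^\beta$, so
\begin{equation*}
|\nabla v|^{q-2} = (C\alpha)^{q-2}\,|x|^{(\alpha-1)(q-2)}\,t^{\beta(q-2)}.
\end{equation*}
Since $v$ is radial in space, I would use the standard formulas $\Delta(|x|^\alpha)=\alpha(\alpha+n-2)|x|^{\alpha-2}$ and $\Delta_\infty^N(|x|^\alpha)=\alpha(\alpha-1)|x|^{\alpha-2}$ (the latter because $\Delta_\infty^N f(|x|)=f''(|x|)$) to get
\begin{equation*}
\Delta v + (p-2)\Delta_\infty^N v = C\alpha\,|x|^{\alpha-2}t^\beta\bigl[\alpha(p-1) + n - p\bigr].
\end{equation*}

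Next I would simplify the bracket using $\alpha=q/(q-1)$: writing over the common denominator $q-1$,
\begin{equation*}
\alpha(p-1) + n - p = \frac{(p-1)q + (n-p)(q-1)}{q-1} = \frac{n(q-1) + (p-q)}{q-1} = n + \frac{p-q}{q-1}.
\end{equation*}
Multiplying by $|\nabla v|^{q-2}$, the factor involving $|x|$ becomes $|x|^{(\alpha-1)(q-2)+\alpha-2}$, and since $\alpha-1=1/(q-1)$ the exponent equals $\tfrac{q-2}{q-1}+\tfrac{q}{q-1}-2=0$. Similarly the $t$--exponents combine to $\beta(q-2)+\beta=\beta(q-1)$, and the constants collapse to $(C\alpha)^{q-1}=\bigl(Cq/(q-1)\bigr)^{q-1}$. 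This gives the stated formula for $\Delta_p^q v$.

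The time derivative is immediate: $\partial_t v(x,t)=C\beta|x|^{q/(q-1)}t^{\beta-1}$. The only mild subtlety is that the computation of $\Delta_p^q v$ assumes $\nabla v\neq 0$, which holds precisely when $x\neq 0$; at $x=0$ the expression $\Delta_p^q v$ is not defined, but the lemma only needs it for $t>0$ and does not claim anything at $x=0$, so no further discussion is needed.
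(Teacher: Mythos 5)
Your computation is correct and reaches the stated formula. The route differs slightly in bookkeeping from the paper: the paper starts from the divergence form, computing $\Delta_p v=\div\left(\abs{\nabla v}^{p-2}\nabla v\right)$ explicitly by expanding the divergence and then multiplying by $\abs{\nabla v}^{q-p}$, whereas you start from the equivalent non-divergence form $\abs{\nabla v}^{q-2}\left(\Delta v+(p-2)\Delta_\infty^N v\right)$ and invoke the radial identities $\Delta\abs{x}^\alpha=\alpha(\alpha+n-2)\abs{x}^{\alpha-2}$ and $\Delta_\infty^N\abs{x}^\alpha=\alpha(\alpha-1)\abs{x}^{\alpha-2}$. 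Both decompositions are declared equal in \eqref{eq:rgnppar} when $\nabla v\neq 0$, so this is the same direct calculation organized differently; your version is marginally shorter because the two radial Laplacian formulas absorb most of the term-by-term expansion the paper carries out. Your closing remark that the computation implicitly assumes $x\neq 0$ is a correct and useful observation that the paper leaves implicit.
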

\begin{proof}
	The proof is a direct calculation but is included for the convenience of the reader.
	
	Denote $\alpha=\frac{q}{q-1}$. We have
		\begin{equation*}
			\nabla v(x,t)=Ct^{\beta}\alpha\abs{x}^{\alpha-2}x
		\end{equation*}
		and thus
		\begin{align*}
			\Delta_{p}v(x,t)&=\div\left(\abs{\nabla v(x,t)}^{p-2}\nabla v(x,t)\right)\\
			&=\div\left(\abs{Ct^{\beta}\alpha\abs{x}^{\alpha-2}x}^{p-2}Ct^{\beta}\alpha\abs{x}^{\alpha-2}x\right)\\
			&=\left(Ct^{\beta}\alpha\right)^{p-1}\div\left(\abs{x}^{(\alpha-1)(p-2)+\alpha-2}x\right)\\
			&=\left(Ct^{\beta}\alpha\right)^{p-1}\sumn\left(\left[(\alpha-1)(p-2)+\alpha-2\right]\abs{x}^{(\alpha-1)(p-2)+\alpha-4}x_i^2+\abs{x}^{(\alpha-1)(p-2)+\alpha-2}\right)\\
			&=\left(Ct^{\beta}\alpha\right)^{p-1}\left(n+\left[(\alpha-1)(p-1)-1\right]\right)\abs{x}^{(\alpha-1)(p-1)-1}.
		\end{align*}
		It also follows that
		\begin{align*}
			\label{eq:calc1}
			\Delta_{p}^{q}v(x,t)&=\abs{\nabla v(x,t)}^{q-p}\div\left(\abs{\nabla v(x,t)}^{p-2}\nabla v(x,t)\right)\\
			&=\abs{Ct^{\beta}\alpha\abs{x}^{\alpha-2}x}^{q-p}\left(Ct^{\beta}\alpha\right)^{p-1}\left(n+\left[(\alpha-1)(p-2)-\alpha-2\right]\right)\abs{x}^{(\alpha-1)(p-2)+\alpha-2}\\
			&=\left(Ct^{\beta}\alpha\right)^{q-1}\left(n+\left[(\alpha-1)(p-2)+\alpha-2\right]\right)\abs{x}^{(\alpha-1)(p-2)+\alpha-2+(\alpha-1)(q-p)}\\
			&=\left(Ct^{\beta}\alpha\right)^{q-1}\left(n+\left[(\alpha-1)(p-1)-1\right]\right)\abs{x}^{(\alpha-1)(q-1)-1}.\numberthis
		\end{align*}
		Finally
		\begin{equation*}
			(\alpha-1)(q-1)-1=\left(\frac{q}{q-1}-1\right)(q-1)-1=0
		\end{equation*}
		and
		\begin{equation*}
			(\alpha-1)(p-1)-1=\left(\frac{q}{q-1}-1\right)(p-1)-1=\frac{p-q}{q-1}.
		\end{equation*}
		Substituting these and $\alpha$ into \eqref{eq:calc1}, we get exactly what was stated. The time derivative is clear.
\end{proof}
In the next theorem, we prove that regularity of a boundary point is characterized by a barrier family existing at that point. This is our main tool when considering geometric approaches to characterizing regularity.

\begin{theorem}
	\label{thm:regbarequiv}
	Let $\xi_0\in\partial\Theta$. The point $\xi_0$ is regular if and only if there exists a barrier family at $\xi_0$.
\end{theorem}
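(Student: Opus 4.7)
The plan is to establish each implication separately, with the sufficiency direction (barrier family implies regularity) forming the main content. Throughout, fix $f\in C(\partial\Theta)$ and $\eps>0$, set $M=\sup_{\partial\Theta}|f|$, and use continuity of $f$ at $\xi_0$ to select $\delta>0$ with $|f(\xi)-f(\xi_0)|<\eps$ for all $\xi\in\partial\Theta\cap B(\xi_0,\delta)$.

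For the sufficiency direction, given a barrier family $\{w_j\}$ at $\xi_0$, choose $k$ with $1/k<\delta$ and $k\geq 2M$, and take the index $j=j(k)$ guaranteed by property (c). The sandwich candidates are
\[
l^-(\xi)=f(\xi_0)-\eps-w_j(\xi),\qquad u^+(\xi)=f(\xi_0)+\eps+w_j(\xi).
\]
Since adding a constant preserves the (super)solution property, $u^+$ is a viscosity supersolution and $l^-$ is a viscosity subsolution (using that $-w_j$ is a subsolution by the symmetric definition in Section \ref{sec:pre}); crucially I do not rescale $w_j$ by a positive constant, because positive multiples of supersolutions are not supersolutions when $q\neq 2$, as the introduction emphasises. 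To verify $u^+\in\mathcal{U}_f$ and $l^-\in\mathcal{L}_f$, split $\partial\Theta$ into $B(\xi_0,1/k)\cap\partial\Theta$, where continuity of $f$ together with $w_j\geq 0$ already place $f$ between $f(\xi_0)\pm\eps$, and its complement, where property (c) provides $\liminf w_j\geq k\geq 2M$, which dominates the global oscillation $|f(\xi)-f(\xi_0)|\leq 2M$. The elliptic-type comparison principle of Theorem \ref{thm:ecomp} then gives the sandwich $l^-\leq\Hl f\leq\Hu f\leq u^+$ on $\Theta$. Property (b) furnishes a sequence $\zeta_n\to\xi_0$ with $w_j(\zeta_n)\to 0$, squeezing $\Hu f(\zeta_n)$ into $[f(\xi_0)-\eps+o(1),\,f(\xi_0)+\eps+o(1)]$; letting $n\to\infty$ and then $\eps\to 0$ yields $\liminf_{\xi\to\xi_0}\Hu f(\xi)=f(\xi_0)$.

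For the necessity direction, suppose $\xi_0$ is regular and construct a family $\{w_k\}$. For each $k$, pick $g_k\in C(\partial\Theta)$ with $0\leq g_k\leq k$, $g_k(\xi_0)=0$, and $g_k\equiv k$ on $\partial\Theta\setminus B(\xi_0,1/k)$, and set $w_k=\Hu g_k$. By Theorem \ref{thm:perronarevisc} each $w_k$ is a viscosity solution, hence a supersolution, and comparison with $0\in\mathcal{L}_{g_k}$ gives $w_k\geq 0$; strict positivity in $\Theta$ follows from the strong minimum principle for non-negative supersolutions with a non-zero boundary datum. Property (b) is the regularity hypothesis applied to the continuous datum $g_k$. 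Property (c), which demands $\liminf_{\zeta\to\xi}w_k(\zeta)\geq k$ at every $\xi\in\partial\Theta$ with $|\xi-\xi_0|\geq 1/k$, is the delicate step. I would argue by contradiction: if $w_k(\zeta_n)<k-\eta$ along some $\zeta_n\to\xi$ with $\eta>0$, then, using the pasting lemma (Lemma \ref{le:pasting}) to glue the constant supersolution $k-\eta$ with an approximating element of $\mathcal{U}_{g_k}$ over a small neighborhood of $\xi$ on which $g_k=k$, one constructs a strictly smaller competitor lying in $\mathcal{U}_{g_k}$, contradicting $w_k=\inf\mathcal{U}_{g_k}$.

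The main obstacle is property (c) of the necessity direction, since regularity is hypothesised only at $\xi_0$ and not at the other boundary points where lower bounds on $w_k$ are required. The argument hinges on two facts: first, property (c) demands only a local $\liminf$ bound near $\xi$, not a pointwise attainment of $g_k$ there, and second, the natural comparison domain near $\xi$ is $\Theta\cap B(\xi,r)$, which is not a parabolic cylinder, so the elliptic-type comparison principle of Section \ref{sec:ecomp} is essential and the standard parabolic comparison of Theorem \ref{thm:comp} would not be applicable. A secondary technical point worth flagging is that in the sufficiency direction one must not rescale $w_j$ by a positive constant because of the non-homogeneity of $\Delta_p^q$ for $q\neq 2$; the barrier family compensates for this precisely by providing a distinct element $w_{j(k)}$ for each prescribed lower bound $k$, which is why a single barrier cannot be expected to suffice in the non-homogeneous regime.
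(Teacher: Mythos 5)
Your sufficiency direction mirrors the paper's argument: sandwich candidates $f(\xi_0)\pm\eps\pm w_{j(k)}$, placed in $\mathcal{U}_f$ and $\mathcal{L}_f$ by splitting $\partial\Theta$ into $B(\xi_0,1/k)$ and its complement and invoking condition (c), with the ordering $\Hl f\leq\Hu f$ supplied by the elliptic-type comparison principle (Theorem \ref{thm:ecomp}), then squeeze via condition (b). That part is fine.

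The necessity direction, however, takes a genuinely different route from the paper and contains a gap which you yourself flag but do not close. You construct a continuous boundary datum $g_k$ with $g_k(\xi_0)=0$, $g_k\equiv k$ outside $B(\xi_0,1/k)$, and set $w_k=\Hu g_k$. The obstruction is exactly where you say it is: condition (c) requires $\liminf_{\zeta\to\xi}w_k(\zeta)\geq k$ at boundary points $\xi\neq\xi_0$ with $g_k(\xi)=k$, and this is precisely the statement that $\xi$ is a regular boundary point for $g_k$ --- which is not hypothesised. While each individual $u\in\mathcal{U}_{g_k}$ satisfies $\liminf_{\zeta\to\xi}u(\zeta)\geq g_k(\xi)=k$ by definition of the upper class, the pointwise infimum defining $\Hu g_k$ need not preserve that boundary liminf; the minimising competitors can vary with $\zeta$ and escape the bound in the limit. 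This is exactly the phenomenon that boundary irregularity expresses, so the claim cannot be established without assuming what must be proved. Your proposed fix via the pasting lemma does not work: the constant $k-\eta$ has boundary liminf $k-\eta<k=g_k(\xi)$ near $\xi$, so neither it nor the glued function lies in $\mathcal{U}_{g_k}$; and in any case producing a smaller member of $\mathcal{U}_{g_k}$ would not contradict $w_k$ being the infimum of that class.

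The paper sidesteps this issue with a different construction. Instead of an arbitrary continuous datum, it builds an explicit classical subsolution on all of $\Rnn$,
\begin{equation*}
	\psi_j(x,t)=j\frac{q-1}{q}|x|^{\frac{q}{q-1}}+j^{q-1}\frac{n+\frac{p-q}{q-1}}{2\diam\Theta}\,t^2,
\end{equation*}
verified via Lemma \ref{le:calc}, and sets $w_j=\Hl\psi_j$. Because $\psi_j$ is a subsolution continuous up to $\overline\Theta$, it belongs to its own lower class $\mathcal{L}_{\psi_j}$, hence $\Hl\psi_j\geq\psi_j$ pointwise in $\Theta$; the lower bound in condition (c) then reads off directly from the explicit values $\psi_j(\xi)$ once $|\xi-\xi_0|\geq 1/k$, with no regularity hypothesis at $\xi$. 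Regularity of $\xi_0$ is invoked only to verify condition (b), as you also do. This is the missing ingredient: condition (c) is obtained by choosing the datum to be a subsolution, not by any comparison argument in small neighborhoods of foreign boundary points. Your two closing observations --- that (c) is the delicate point because regularity is assumed only at $\xi_0$, and that the non-homogeneity of $\Delta_p^q$ for $q\neq 2$ is why a barrier family (not a single rescaled barrier) is required --- are both correct and aligned with the paper's discussion.
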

\begin{proof}
	First, assume that there exists a barrier family at $\xi_0\in\partial\Theta$. Take continuous function $f\in C(\partial\Theta)$. By continuity, for each $\eps>0$ there exists a constant $\delta>0$ such that $\abs{f(\xi)-f(\xi_0)}<\eps$ whenever $\abs{\xi-\xi_0}<\delta$, $\xi\in\partial\Theta$. Thus if $\abs{\xi-\xi_0}<\delta$, we get that
		\begin{equation}
		\label{eq:regbarequiv1}
		f(\xi)-f(\xi_0)-\eps<0\leq \liminf_{\Theta\ni\zeta\to\xi}w_j(\zeta)
		\end{equation}
		because $w_j$ are assumed to be positive. If $\abs{\xi-\xi_0}\geq\delta$, we pick $k\in\N$ such that
		\begin{equation*}
		k>f(\xi)-f(\xi_0)-\eps \quad \text{ and } \quad \delta\geq\frac{1}{k}.
		\end{equation*}
		Now by Definition \ref{def:barfam} condition (c) we know that there exists a $j$ such that
		\begin{equation}
		\label{eq:regbarequiv2}
		\liminf_{\Theta\ni\zeta\to\xi}w_j(\zeta)\geq k>f(\xi)-f(\xi_0)-\eps.
		\end{equation}
		Combining estimates \eqref{eq:regbarequiv1} and \eqref{eq:regbarequiv2}, we get that for some $j\geq1$
		\begin{equation*}
		\liminf_{\Theta\ni\zeta\to\xi}w_j(\zeta)+f(\xi_0)+\eps>f(\xi) \quad\text{ for all }\xi\in\partial\Theta.
		\end{equation*}
		Thus because this is also a supersolution, we have $w_j+f(\xi_0)+\eps\in\mathcal{U}_f$, and hence
		\begin{equation}
		\label{eq:regbarequiv3}
		\limsup_{\Theta\ni\zeta\to\xi}\Hu f(\zeta)\leq\liminf_{\Theta\ni\zeta\to\xi}w_j(\zeta)+f(\xi_0)+\eps=f(\xi_0)+\eps.
		\end{equation}
		By similar calculation as above, $-w_j-\eps+f(\xi_0)\in\mathcal{L}_f$ and we obtain that for some $j$
		\begin{equation}
		\label{eq:regbarequiv4}
		\liminf_{\Theta\ni\zeta\to\xi}\Hu f(\zeta)\geq\liminf_{\Theta\ni\zeta\to\xi}\Hl f(\zeta)\geq -\eps+f(\eps_0),
		\end{equation}
		where the first inequality follows from the fact that $\Hu f\geq\Hl f$ in $\Theta$. This is because we can use the elliptic-type comparison principle Theorem \ref{thm:ecomp} for any pair of $u\in\mathcal{U}_f$ and $v\in\mathcal{L}_f$ in the definitions of Perron solutions. Letting $\eps\to0$, combining \eqref{eq:regbarequiv3} and \eqref{eq:regbarequiv4} gives us that $\xi_0$ is regular.
		
		For the other direction, let us assume that $\xi_0\in\partial\Theta$ is regular. Without loss of generality, we may assume that $\xi_0$ is the origin. For all $(x,t)\in\Rnn$ we define
		\begin{align*}
		\psi_j(x,t)&=j\frac{q-1}{q}\abs{x}^{\frac{q}{q-1}}+j^{q-1}\frac{n+\frac{p-q}{q-1}}{2\diam\Theta}t^2
		\end{align*}
		By Lemma \ref{le:calc}, we have
		\begin{align*}
			\Delta_p^q\psi_j(x,t)&=\left(j\frac{q-1}{q}\frac{q}{q-1}\right)^{q-1}\left[n+\frac{p-q}{q-1}\right]
		\end{align*}
		and
		\begin{equation*}
			\partial_{t}\psi_j(x,t)=j^{q-1}\frac{n+\frac{p-q}{q-1}}{\diam\Theta}t.
		\end{equation*}
		Thus
		\begin{equation*}
		\partial_t\psi_j(x,t)-\Delta_p^q\psi_j(x,t)=j^{q-1}\left[n+\frac{p-q}{q-1}\right]\left(\frac{t}{\diam\Theta}-1\right)\leq 0
		\end{equation*}
		for all $(x,t)\in\Theta$ making $\psi_j$ a subsolution. We will verify that $w_j=\Hl\psi_j$ gives us a barrier family at $\xi_0$ by checking the conditions from Definition \ref{def:barfam}. We have
		\begin{itemize}
			\item[$(a)$:]From $\psi_j\geq0$, it follows also that $w_j\geq0$. Because the set $\Theta$ is bounded, we get
			\begin{equation*}
			\psi_j(x,t)\leq j\frac{q-1}{q}\diam\Theta^{\frac{q}{q-1}}+j^{q-1}\frac{cn}{2}\diam\Theta<\infty
			\end{equation*}
			for every $j$. Thus $w_j$ is a viscosity supersolution by Theorem \ref{thm:perronarevisc}.
			\item[$(b)$:]Follows directly from the regularity of $\xi_0$ because $\psi_j(\xi_0)=0$ for all $j$.
			\item[$(c)$:] Because $\psi_j$ is a viscosity subsolution bounded above by itself on the boundary, we have $\psi_j\in\mathcal{L}_{\psi_j}$ and thus by definition $\Hl\psi_j\geq\psi_j$. Using this, let $k=1,2,\dots,$ and pick $r=\frac{1}{k}$. For any $\xi=(x,t)\in\Theta\setminus B_{r}(\xi_0)$, we have by continuity of $\psi_j$,
			\begin{equation*}
				\liminf_{\Theta\ni\zeta\to\xi}w_j(\zeta)\geq\liminf_{\Theta\ni\zeta\to\xi}\psi_j(\zeta)\geq\psi_j(\xi)\geq j\frac{q-1}{q}r^{\frac{q}{q-1}}+j^{q-1}\frac{n+\frac{p-q}{q-1}}{2\diam\Theta}r^2\geq k,
			\end{equation*}
			where the last inequality holds for large enough $j$. This implies condition $(c)$ of Definition \ref{def:barfam}.\qedhere
		\end{itemize}
\end{proof}
In some cases, the additional conditions satisfied by strong barrier families prove useful in practice. Note that condition (e) gives information over the entire $\overbar{\Theta}$ and implies condition (c), which gives information only over $\partial\Theta$. It turns out that the existence of one type of barrier family implies the other.
\begin{proposition}
	\label{prop:barfam}
	Let $\xi_0\in\partial\Theta$. There exists a barrier family at $\xi_0$ if and only if there exists a strong barrier family at $\xi_0$.
\end{proposition}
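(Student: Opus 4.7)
The forward direction is a tautology: by Definition~\ref{def:barfam} a strong barrier family is, in addition to (d) and (e), already required to satisfy (a), (b) and (c), and hence is automatically a barrier family.

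For the converse, suppose a barrier family exists at $\xi_0$. Then $\xi_0$ is regular by Theorem~\ref{thm:regbarequiv}, and the plan is to reuse the explicit construction from the second half of that proof and check that it actually yields a strong barrier family. Translating so that $\xi_0$ is the origin, I would define, for each $j\in\N$,
\begin{equation*}
\psi_j(x,t)=j\,\frac{q-1}{q}|x|^{q/(q-1)}+j^{q-1}\,\frac{n+(p-q)/(q-1)}{2\diam\Theta}\,t^{2},\qquad w_j=\Hl\psi_j.
\end{equation*}
The argument already carried out in the proof of Theorem~\ref{thm:regbarequiv} shows that $\{w_j\}$ satisfies conditions (a), (b) and (c); what remains is to verify the two additional conditions (d) and (e).

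Condition (d) is immediate: each $w_j$ is a viscosity solution by Theorem~\ref{thm:perronarevisc} and hence continuous on $\Theta$. For condition (e) the natural candidate is $d=\psi_1$, which is continuous on $\overline\Theta$ and vanishes only at $\xi_0$. Since $\psi_j\in\mathcal L_{\psi_j}$, the Perron construction yields $w_j\geq\psi_j$, so the task reduces to the elementary pointwise estimate $\psi_j\geq k\psi_1$ on $\Theta$. Comparing the coefficients of $|x|^{q/(q-1)}$ and of $t^{2}$ separately, this is achieved by taking $j\geq\max\bigl(k,k^{1/(q-1)}\bigr)$, which is well defined thanks to $q>1$.

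The only delicate point in the plan is precisely this last comparison: since the two summands of $\psi_j$ scale in $j$ with exponents $1$ and $q-1$, one must verify that a single value of $j$ simultaneously dominates both terms of $k\psi_1$. Once this is settled, all remaining verifications are straightforward applications of Theorems~\ref{thm:regbarequiv} and~\ref{thm:perronarevisc}.
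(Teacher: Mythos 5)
Your proposal is correct and matches the paper's proof essentially line by line: both reuse the functions $\psi_j$ and $w_j=\Hl\psi_j$ from the proof of Theorem~\ref{thm:regbarequiv}, invoke Theorem~\ref{thm:perronarevisc} for continuity, take $d=\psi_1$, and handle the different $j$-scalings of the two summands by choosing $j\geq\max\bigl(k,k^{1/(q-1)}\bigr)$ (equivalently, noting $\psi_j\geq\min(j,j^{q-1})\,d$). The point you flag as delicate is exactly the one the paper addresses with the $\min(j,j^{q-1})$ device, so no gap remains.
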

\begin{proof}
	A strong barrier family satisfies conditions $(a)-(c)$ by definition and thus is also a barrier family. We will prove the other direction.
	
	Assume that there exists a barrier family at $\xi_0$. By Theorem \ref{thm:regbarequiv}, the point $\xi_0$ is regular. Just as in the proof of Theorem \ref{thm:regbarequiv}, we define
	\begin{align*}
		\psi_j(x,t)&=j\frac{q-1}{q}\abs{x}^{\frac{q}{q-1}}+j^{q-1}\frac{n+\frac{p-q}{q-1}}{2\diam\Theta}t^2
	\end{align*}
	for all $(x,t)\in\Rnn$ and let $w_j=\Hl\psi_j$. We will prove that $w_j$ in fact forms a strong barrier family. In the proof of Theorem \ref{thm:regbarequiv}, we already proved conditions $(a)-(c)$.
	\begin{itemize}
		\item[$(d)$:] Because $w_j$ is a viscosity solution by Theorem \ref{thm:perronarevisc}, it is continuous for every $j$.
		\item[$(e)$:]
		Let
		\begin{equation}
			d(x,t)=\frac{q-1}{q}\abs{x}^{\frac{q}{q-1}}+\frac{n+\frac{p-q}{q-1}}{2\diam\Theta}t^2.
		\end{equation} Notice that $\psi_j(x,t)\geq\min(j,j^{q-1})d(x,t)$. This is continuous and non-negative and $d(x,t)=0$ if and only if $(x,t)=(0,0)$.
		Pick any $k\in\N$. Now by picking $j>\max\{k,k^{\frac{1}{q-1}}\}$, we get
		\begin{equation*}
			w_j=\Hl\psi_j\geq\psi_j\geq\min\{j,j^{q-1}\}d(x,t)\geq kd(x,t)
		\end{equation*}
		as desired. \qedhere
	\end{itemize}
\end{proof}
We get the following restriction result that turns out to be useful in later proofs as a direct corollary of Theorem \ref{thm:regbarequiv}.
\begin{corollary}
	\label{cor:regsubset}
	Let $\xi_0\in\partial\Theta$ and let $G\subset\Theta$ be open and such that $\xi_0\in\partial G$. If $\xi_0$ is regular with respect to $\Theta$, then $\xi_0$ is regular with respect to $G$.
\end{corollary}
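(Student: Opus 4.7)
The plan is to reduce to Theorem \ref{thm:regbarequiv}: regularity of $\xi_0$ with respect to $\Theta$ yields a barrier family in $\Theta$, and it suffices to show that this can be turned into a barrier family in $G$, so that Theorem \ref{thm:regbarequiv} applied to $G$ gives the conclusion. Before restricting, I would first use Proposition \ref{prop:barfam} to upgrade to a strong barrier family $(w_{j})_{j=1}^{\infty}$ in $\Theta$ at $\xi_0$, so that I additionally have the pointwise control $w_{j}\geq k\,d$ in $\Theta$ (for suitable $j=j(k)$), where $d\in C(\overline{\Theta})$ is non-negative and vanishes only at $\xi_0$. The natural candidate family in $G$ is the restriction $\tilde{w}_{j}:=w_{j}|_{G}$, and the three conditions of Definition \ref{def:barfam} are what must be verified.

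Condition (a) is immediate, since the property of being a viscosity supersolution is local and $G$ is open in $\Theta$. Condition (c) is handled by the strong-barrier control $w_{j}\geq k\,d$: for fixed $k\in\mathbb{N}$, the set $A_{k}=\{\xi\in\overline{\Theta}:|\xi-\xi_0|\geq 1/k\}$ is compact, and because $d$ is continuous and vanishes only at $\xi_0$, the minimum $d_{\min}=\min_{A_{k}}d$ is strictly positive. Choosing $j$ so that $w_{j}\geq (k/d_{\min})\,d$ in $\Theta$, continuity of $d$ on $\overline{\Theta}\supset\overline{G}$ gives, for every $\xi\in\partial G$ with $|\xi-\xi_0|\geq 1/k$,
\[
\liminf_{G\ni\zeta\to\xi}\tilde{w}_{j}(\zeta)\geq\frac{k}{d_{\min}}\liminf_{G\ni\zeta\to\xi}d(\zeta)=\frac{k}{d_{\min}}d(\xi)\geq k,
\]
as required.

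The only real obstacle is condition (b): naively the liminf over the smaller set $G$ could be strictly larger than that over $\Theta$, so (b) for $\tilde w_j$ does not follow formally from (b) for $w_j$. To overcome this I would revisit the explicit construction in the proof of Proposition \ref{prop:barfam}, where $w_{j}=\Hl\psi_{j}$ for a continuous function $\psi_{j}$ vanishing at $\xi_0$. Regularity of $\xi_0$ with respect to $\Theta$, applied to $-\psi_{j}$ and read through the equivalent $\Hl$-formulation noted after Definition \ref{def:reg}, gives $\limsup_{\Theta\ni\zeta\to\xi_0}w_{j}(\zeta)=0$; combined with $w_{j}\geq 0$ this upgrades the liminf statement to a genuine limit $\lim_{\Theta\ni\zeta\to\xi_0}w_{j}(\zeta)=0$, which is then automatically inherited by the subdomain $G$. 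Having verified (a)--(c), Theorem \ref{thm:regbarequiv} applied to $G$ delivers the regularity of $\xi_0$ with respect to $G$.
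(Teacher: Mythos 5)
Your proposal is correct and follows essentially the same strategy as the paper: upgrade to a strong barrier family via Proposition \ref{prop:barfam}, restrict to $G$, and check the barrier conditions. The one place where you go beyond the paper is condition (b). The paper's proof simply asserts that conditions (a), (b), (e) ``clearly'' carry over to the restriction $\tilde w_j = w_j|_G$, but as you correctly observe, $\liminf_{G\ni\zeta\to\xi_0}$ is taken over a smaller set than $\liminf_{\Theta\ni\zeta\to\xi_0}$ and could therefore be strictly larger; the abstract conditions (a)--(e) of a strong barrier family only give lower bounds on $w_j$ near $\xi_0$ and do not by themselves force $\limsup_{\Theta\ni\zeta\to\xi_0}w_j(\zeta)=0$. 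Your resolution --- returning to the explicit construction $w_j=\Hl\psi_j$ with $\psi_j$ continuous and vanishing at $\xi_0$, then using the regularity of $\xi_0$ with respect to $\Theta$ (in its $\Hl$-formulation) together with $w_j\geq 0$ to upgrade the $\liminf$ in (b) to a full limit, which is then inherited by $G$ --- is exactly what is needed to make the step rigorous, and indeed identifies what the paper implicitly relies on. Your verification of (c) from (e) via the compactness of $\{\xi\in\overbar\Theta : |\xi-\xi_0|\geq 1/k\}$ and the positivity of $\min d$ there is also sound and spells out what the paper leaves implicit. In short: same route, but you fill in a step the paper treats as obvious and which is not obvious at the level of generality of Definition \ref{def:barfam} alone.
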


\begin{proof}
	Because $\xi_0$ is regular with respect to $\Theta$, we have that by Theorem \ref{thm:regbarequiv} and Proposition \ref{prop:barfam}, there exists a strong barrier family $\{w_j\}_{j=1}^\infty$ in $\Theta$ at point $\xi_0$. Condition $(e)$ from Definition \ref{def:barfam} gives us a non-negative function $d$. Define $\tilde{w}_j=w_j\mid_G$ and $\tilde{d}=d\mid_G$. 
	
	Now $\{\tilde{w}_j\}_{j=1}^\infty$ is a barrier family in $G$ at $\xi_0$ because it clearly satisfies conditions $(a)$, $(b)$ and $(e)$ now with respect to the smaller set $G$ and condition $(e)$ implies $(c)$.
	Thus by using Theorem \ref{thm:regbarequiv} for this barrier family, we have that $\xi_0$ is regular with respect to $G$.
\end{proof}
We will also prove the following proposition to show that regularity is a local property. This is needed later in the proof of the exterior ball condition.
\begin{proposition}
	\label{prop:regsubset}
	Let $\xi_0\in\partial\Theta$ and $B\subset\Rnn$ be any ball containing $\xi_0$. Then $\xi_0$ is regular with respect to $\Theta$ if and only if $\xi_0$ is regular with respect to $\Theta\cap B$.
\end{proposition}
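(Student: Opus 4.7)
The plan has two directions. The forward implication (regular with respect to $\Theta$ implies regular with respect to $\Theta \cap B$) is immediate from Corollary \ref{cor:regsubset} applied with $G = \Theta \cap B$, noting that $\xi_0 \in \partial\Theta \cap B \subset \partial G$.

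For the converse, which is the substantive part, assume $\xi_0$ is regular with respect to $\Theta \cap B$. By Theorem \ref{thm:regbarequiv} combined with Proposition \ref{prop:barfam}, there is a strong barrier family $\{\tilde w_j\}_{j=1}^\infty$ in $\Theta \cap B$ at $\xi_0$ with associated function $d \in C(\overline{\Theta \cap B})$ vanishing only at $\xi_0$. I would then choose a smaller ball $B'$ with $\xi_0 \in B'$ and $\overline{B'} \subset B$, and set
\[
\delta := \min\{d(z) : z \in \overline{\Theta \cap B} \cap \partial B'\},
\]
which is strictly positive by compactness and continuity since $\xi_0 \notin \partial B'$. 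For each $k \in \N$, pick an index $j(k)$ with $\tilde w_{j(k)} \geq k\, d$ on $\Theta \cap B$ (available from condition (e) of Definition \ref{def:barfam}), and define the candidate barrier
\[
W_k(\xi) := \begin{cases} \min\bigl(\tilde w_{j(k)}(\xi),\, k\delta\bigr) & \text{if } \xi \in \Theta \cap B', \\ k\delta & \text{if } \xi \in \Theta \setminus B'. \end{cases}
\]

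The key step is to show that each $W_k$ is a viscosity supersolution in all of $\Theta$ via the pasting lemma \ref{le:pasting} applied with $G = \Theta \cap B'$, $u \equiv k\delta$, and $v = \tilde w_{j(k)}$. The main obstacle is verifying lower semicontinuity of $W_k$ across the gluing interface $\Theta \cap \partial B'$; this is precisely where $\delta$ is engineered, because on $\Theta \cap \partial B'$ we have $\tilde w_{j(k)} \geq k\, d \geq k\delta$, so the lower semicontinuity of $\tilde w_{j(k)}$ forces the inner expression to satisfy $\liminf_{B' \ni \zeta \to \xi} W_k(\zeta) \geq k\delta$ and agree with the outer constant value in the limit.

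Once the pasting lemma applies, it remains to verify the barrier-family properties (a)--(c) of Definition \ref{def:barfam} for $\{W_k\}$ in $\Theta$ at $\xi_0$ and then invoke Theorem \ref{thm:regbarequiv}. Positivity (a) is clear, and (b) follows from $W_k \leq \tilde w_{j(k)}$ in a neighborhood of $\xi_0 \in B'$. For (c), given a prescribed level $k'$, split $\xi \in \partial\Theta$ with $|\xi - \xi_0| \geq 1/k'$ according to whether $\xi \in \overline{B'}$ or not: on $\Theta \setminus \overline{B'}$ one has $W_k \equiv k\delta$ nearby, and on $\overline{B'} \cap \partial\Theta$ one uses $\liminf W_k \geq k \min(\delta, m_{k'})$, where $m_{k'}$ is the positive minimum of $d$ on the compact set $\overline{B'} \cap \partial\Theta \cap \{|\eta - \xi_0| \geq 1/k'\}$. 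Both bounds tend to infinity as $k \to \infty$, so choosing $k$ large enough yields (c) and completes the construction of the barrier family in $\Theta$.
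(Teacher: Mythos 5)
Your proof is correct and follows essentially the same route as the paper: extract a strong barrier family via Theorem \ref{thm:regbarequiv} and Proposition \ref{prop:barfam}, cap it at a constant level determined by the infimum of $d$ on a sphere away from $\xi_0$, and paste via Lemma \ref{le:pasting}. The only cosmetic difference is that you introduce an intermediate ball $B'$ with $\overline{B'}\subset B$ and paste along $\partial B'$, whereas the paper pastes directly along $\partial B$ with $m=\inf_{\overline{\Theta}\cap\partial B} d$; since $\xi_0$ lies in the open ball $B$ and hence off $\partial B$, the paper's infimum is already strictly positive by compactness and continuity of $d$ on $\overline{\Theta\cap B}$, so the shrinkage to $B'$ is a harmless belt-and-suspenders refinement rather than a genuinely different argument.
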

\begin{proof}
	Using Corollary \ref{cor:regsubset}, we know that regularity of $\xi_0$ with respect to $\Theta$ implies regularity with respect to $\Theta\cap B$.
	
	Assume $\xi_0$ is regular with respect to $\Theta\cap B$. By Theorem \ref{thm:regbarequiv} and Proposition \ref{prop:barfam}, there exists a strong barrier family $w_j$ in $\Theta\cap B$. By condition $(e)$ of Definition \ref{def:barfam}, there now exists a non-negative function $d\in C(\overline{\Theta\cap B})$ such that for each $k\in\N$, there exists a $j=j(k)$ such that $w_j\geq kd$ in $\Theta\cap B$.
	Now if we denote $m=\inf_{\overline{\Theta}\cap\partial B}d>0$ and define
	\begin{equation*}
		w_k'(\xi)=\begin{cases}
			\min\{w_{j(k)}(\xi),km\} & \text{ in } \Theta\cap B\\
			km &	\text{ in } \Theta\setminus B
		\end{cases}
	\end{equation*}
	and
	\begin{equation*}
		d'(\xi)=\begin{cases}
			\min\{d(\xi),m\} & \text{ in } \Theta\cap B\\
			m &	\text{ in } \Rnn\setminus B.
		\end{cases}
	\end{equation*}
	Now $w_k'$ is lower semicontinuous, so it is a viscosity supersolution by Lemma \ref{le:pasting}. It also satisfies $w_k'\geq kd'$ in $\Theta$ which can be used to prove remaining condition (c). 
	
	Let $l=1,2,\dots$, and take $\xi\in\partial\Theta$ such that $\abs{\xi-\xi_0}\geq\frac{1}{l}$. Now by picking $k$ large enough to satisfy $kd'(\xi)>l$, we have
	\begin{equation*}
		\liminf_{\Theta\ni\zeta\to\xi}w_j'(\zeta)\geq\liminf_{\Theta\ni\zeta\to\xi}kd'(\zeta)\geq l.
	\end{equation*} Thus $w_k'$ forms a barrier family in $\Theta$. This implies regularity with respect to $\Theta$ by Theorem \ref{thm:regbarequiv}.
\end{proof}
\section{A Counterexample and the multiplied equation}
\label{sec:count}
In this section, we will prove that a single barrier is not enough to guarantee the regularity of a boundary point when $q<2$. We construct a set where the origin is irregular but we can still find a barrier function on that point. We also prove that multiplying one side of \eqref{eq:rgnppar} by a constant does not affect boundary regularity. We need the following scaling lemma for our proof.
\begin{lemma}{(Scaling lemma)}
	\label{le:scaling}
	Let $q\not=2$, $a>0$ and $\Theta\subset\Rnn$ be a domain such that $(0,0)\in\partial\Theta$. Set
	\begin{equation*}
		\tilde{\Theta}=\left\{(ax,t)\in\Rnn:(x,t)\in\Theta\right\}.
	\end{equation*}
	Then $(0,0)$ is regular with respect to $\Theta$ if and only if it is regular to \eqref{eq:rgnppar} with respect to $\tilde{\Theta}$.
\end{lemma}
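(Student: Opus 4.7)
The plan is to use the barrier family characterization of Theorem \ref{thm:regbarequiv}: it suffices to show that the existence of a barrier family at $(0,0)$ in $\Theta$ is equivalent to the existence of a barrier family at $(0,0)$ in $\tilde\Theta$. Since the map $T(x,t) = (ax,t)$ is invertible (replacing $a$ by $1/a$ swaps the two setups), only one direction needs to be transported. Given $w \colon \Theta \to (0,\infty]$, define
\begin{equation*}
	\tilde w(y,t) = C\, w(y/a, t),
\end{equation*}
for a positive constant $C$ to be chosen. A direct chain-rule computation gives, at points where the gradient does not vanish,
\begin{equation*}
	\partial_t \tilde w(y,t) = C\, \partial_t w(y/a, t), \qquad \Delta_p^q \tilde w(y,t) = C^{q-1} a^{-q}\, \Delta_p^q w(y/a, t).
\end{equation*}
For $\tilde w$ to satisfy $\partial_t \tilde w \geq \Delta_p^q \tilde w$ whenever $w$ does, the coefficients must match:
\begin{equation*}
	C = C^{q-1} a^{-q} \quad \Longleftrightarrow \quad C^{2-q} = a^{-q}.
\end{equation*}
This has a unique positive solution $C = a^{q/(q-2)}$ precisely because $q \neq 2$; this is the only place the hypothesis enters.

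To promote this formal computation to the viscosity setting, I will check that the map $\varphi \mapsto \tilde\varphi(y,t) := C\varphi(y/a,t)$ preserves admissibility in the sense of Definition \ref{def:admissible}. If $\nabla\varphi(x_0,t_0) \neq 0$, then $\nabla\tilde\varphi(ax_0,t_0) \neq 0$ and there is nothing to check; otherwise the admissibility bound $f(|x-x_0|) + \sigma(t-t_0)$ becomes $Cf(|y-ax_0|/a) + C\sigma(t-t_0)$, and the verification that $r \mapsto Cf(r/a)$ still lies in $\mathcal{F}(F)$ reduces to the limit condition at the origin, which follows from the same scaling identity used above. Since touching from below and touching points are preserved under the affine map $y = ax$, this shows that $\tilde w$ is a viscosity supersolution in $\tilde\Theta$ whenever $w$ is one in $\Theta$.

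With the supersolution property transported, I verify the conditions (a)--(c) of Definition \ref{def:barfam} for $\{\tilde w_j\}$. Conditions (a) and (b) are immediate from the scaling because $C$ is a finite positive constant. For condition (c), a point $\tilde\xi \in \partial\tilde\Theta$ with $|\tilde\xi| \geq 1/k$ corresponds via $T^{-1}$ to $\xi \in \partial\Theta$ with $|\xi| \geq 1/(k\max(1,a))$, so applying condition (c) of the original family with any integer exceeding both $k/C$ and $k\max(1,a)$ yields an index $j$ such that $\liminf_{\tilde\zeta \to \tilde\xi} \tilde w_j(\tilde\zeta) \geq k$. The only substantive obstacle is locating the scaling exponent $q/(q-2)$, which is precisely where $q \neq 2$ becomes essential; everything else is bookkeeping.
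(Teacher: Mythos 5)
Your proof is correct, but it takes a genuinely different route than the paper. The paper works directly with the Perron definition of regularity: writing $u(x,t)=K\tilde u(ax,t)$ with $K=a^{-q/(q-2)}$ (the reciprocal of your $C$, since the map goes from $\tilde\Theta$ to $\Theta$), it observes that this map intertwines the upper classes $\mathcal{U}_f$ on the two domains, hence intertwines the upper Perron solutions, so the defining limit condition of Definition \ref{def:reg} transfers immediately. You instead route through the barrier characterization of Theorem \ref{thm:regbarequiv} and show that scaling transports barrier families. Both arguments hinge on the same scaling identity $\Delta_p^q(Cw(\cdot/a,\cdot))=C^{q-1}a^{-q}\Delta_p^q w(\cdot/a,\cdot)$ and on the equation $C^{2-q}=a^{-q}$, which is solvable only for $q\ne 2$. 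The paper's route is marginally more economical, since it never needs the barrier theorem; on the other hand, you make explicit the preservation of admissibility in the sense of Definition \ref{def:admissible} and the verification that $r\mapsto Cf(r/a)$ stays in $\mathcal{F}(F)$, steps which the paper leaves implicit when asserting that the map preserves viscosity supersolutions. Your handling of condition (c) with the $\max(1,a)$ factor is correct.
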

\begin{proof}
	The proof is almost identical to the $p$-parabolic case in \cite[Proposition 4.1]{Bjorn2017}.
	Take $\tilde{u}:\tilde{\Theta}\to\R$ and define $u:\Theta\to\R$ by
	\begin{equation*}
		u(x,t)=K\tilde{u}(ax,t),
	\end{equation*}
	where $K=a^{-\frac{q}{q-2}}$. By direct calculation $\partial_{t}u(x,t)=K\partial_{t}\tilde{u}(ax,t)$ and
	\begin{align*}
		\Delta_{p}^qu(x,t)&=\abs{\nabla u(x,t)}^{q-p}\Delta_p u\\&=\abs{Ka\nabla \tilde{u}(ax,t)}^{q-p}K^{p-1}a^p\Delta_{p}\tilde{u}(ax,t)\\&=K^{q-1}a^q\abs{\nabla \tilde{u}(ax,t)}^{q-p}\Delta_{p}\tilde{u}(ax,t)\\&=K\Delta_{p}^q\tilde{u}(ax,t).
	\end{align*}
	Thus $u$ is a viscosity supersolution to \eqref{eq:rgnppar} in $\Theta$ if and only if $\tilde{u}$ is a viscosity supersolution to  \eqref{eq:rgnppar} in $\tilde{\Theta}$. Take arbitrary $\tilde{f}\in C(\partial\tilde{\Theta})$ and define $f:\partial\Theta\to\R$ by
	\begin{equation*}
		f(x,t)=K\tilde{f}(ax,t).
	\end{equation*}
	Denote by $\overline{H}_{A}f(x,t)$ the Perron solution defined over set $A$ for bounded $f:\partial A\to\R$. By the calculation above we have
	\begin{equation*}
		\overline{H}_{\Theta}f(x,t)=\overline{H}_{\tilde{\Theta}}(K\tilde{f})(ax,t)
	\end{equation*}
	for all $(x,t)\in\Theta$ and thus regularity of the origin with respect to $\Theta$ implies the same with respect to $\tilde{\Theta}$. Converse is proven by swapping the roles of $\Theta$ and $\tilde{\Theta}$ and replacing $a$ with $\frac{1}{a}$.
\end{proof}
In Theorem \ref{thm:regbarequiv}, we proved that the existence of a barrier family is a sufficient condition for regularity and next, we will prove that the existence of a single barrier is not enough in the singular case. This corresponds to the same result for the $p$-parabolic equation and similarly, the existence of a such counterexample remains open for the degenerate case. The proof is based on constructing suitable boundary values to prove irregularity of the origin and then constructing a barrier at that point.
\begin{theorem}
	\label{thm:counter}
	Let $1<q<2$, $K>0$ and $0<s<\frac{1}{q}$. Then there exists a single barrier $w$ at $(0,0)$ for the domain
	\begin{equation*}
		\Theta=\left\{(x,t)\in\Rn\times\R : \abs{x}\leq K(-t)^s \text{ and } -1<t<0\right\}
	\end{equation*}
	despite $(0,0)$ being irregular.
\end{theorem}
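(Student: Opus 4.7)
The plan is to prove the two assertions separately: (i) existence of a single barrier at $(0,0)$, and (ii) irregularity of $(0,0)$. The barrier construction is the main new content and hinges on an admissibility observation along the cusp axis; the irregularity is handled by a self-similar comparison argument.

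For the barrier I would try the ansatz $w(x,t) = A(-t)^\alpha - B|x|^{q/(q-1)}(-t)^\beta$. Using Lemma \ref{le:calc} with $\tau=-t$ together with the identity $\Delta_p^q(-u)=-\Delta_p^q(u)$, a direct calculation gives, for $x\neq 0$,
\begin{equation*}
\partial_t w - \Delta_p^q w = -A\alpha(-t)^{\alpha-1} + B\beta|x|^{q/(q-1)}(-t)^{\beta-1} + B^{q-1}M_1(-t)^{\beta(q-1)},
\end{equation*}
where $M_1=(q/(q-1))^{q-1}(n+(p-q)/(q-1))$ is positive for all $p,q>1$. Choosing $\alpha=\beta(q-1)+1$ aligns the first and third time powers, so the supersolution inequality at $x=0$ reduces to $B^{q-1}M_1\geq A\alpha$. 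Taking $\beta>0$ large enough that $\beta(2-q)+sq/(q-1)>1$ — which is possible precisely because $q<2$ — and then $B$ small with $A$ slightly above $BK^{q/(q-1)}$ yields simultaneously the supersolution inequality at every $x\neq 0$ (since the middle term is then nonnegative) and strict positivity of $w$ in $\Theta$ via the cusp bound $|x|\leq K(-t)^s$.

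The delicate point is the supersolution check along $\{x=0\}$, where $\nabla w=0$. Suppose $\varphi\in C^2$ is admissible at $(0,t_0)$ with $\nabla\varphi(0,t_0)=0$ and touches $w$ from below at $(0,t_0)$. Restricting to $t=t_0$ and combining the touching inequality with admissibility yields
\begin{equation*}
B(-t_0)^\beta|x|^{q/(q-1)} \leq \varphi(0,t_0)-\varphi(x,t_0) \leq f(|x|)
\end{equation*}
for some $f\in\mathcal{F}(F)$. A short computation shows that the radial profile $r^{q/(q-1)}$ gives $\Delta_p^q(|x|^{q/(q-1)})=M_1\neq 0$, so the $\mathcal{F}(F)$-condition forces $f(r)=o(r^{q/(q-1)})$ as $r\to 0^+$ for every admissible $f$. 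Dividing by $|x|^{q/(q-1)}$ and passing to the limit then yields $B(-t_0)^\beta\leq 0$, contradicting $B,(-t_0)>0$. Hence no admissible test function can touch $w$ from below on the axis, the supersolution condition is vacuously satisfied there, and the continuity and strict positivity of $w$ off $(0,0)$ furnish the remaining barrier conditions.

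For the irregularity half, the plan is to produce continuous boundary data $g\in C(\partial\Theta)$ with $g(0,0)=0$ together with a subsolution $v$ of \eqref{eq:rgnppar} in $\Theta$ satisfying $v\in\mathcal{L}_g$ and $\liminf_{\zeta\to(0,0)}v(\zeta)>0$. The elliptic-type comparison principle Theorem \ref{thm:ecomp} then yields
\begin{equation*}
\liminf_{\zeta\to(0,0)}\Hu g(\zeta)\geq\liminf_{\zeta\to(0,0)}\Hl g(\zeta)\geq\liminf_{\zeta\to(0,0)}v(\zeta)>0=g(0,0),
\end{equation*}
which is the desired irregularity. The subsolution $v$ comes from the radial self-similar solutions of \eqref{eq:rgnppar}: by the radial reduction of \cite{Parviainen2020} these correspond to self-similar solutions of a weighted one-dimensional $q$-parabolic equation, and in the fast-diffusion range $1<q<2$ the resulting profile remains bounded below along the axis of any cusp wider than the intrinsic scale $(-t)^{1/q}$. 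Since $s<1/q$ places the cusp $|x|\leq K(-t)^s$ above this scale, a suitable truncation/shift of the self-similar profile provides $v$; the scaling Lemma \ref{le:scaling} normalizes $K$. The main obstacle will be this irregularity half — extracting the correct self-similar profile and matching it against the chosen data $g$ along the lateral cusp surface — since the barrier construction itself is essentially exponent bookkeeping once the admissibility observation at the axis is in place.
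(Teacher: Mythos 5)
Your barrier construction is essentially sound and is a small variant of the paper's. With the extra parameters $A,\alpha,\beta$ you arrange positivity of $w$ on all of $\overline\Theta\setminus\{(0,0)\}$, avoiding the truncation/pasting step that the paper uses (the paper takes $\alpha=\beta=1/(2-q)$ with a fixed $B\leq 1$, so its $v$ need not stay positive far from the origin, and it then pastes with a constant via Lemma~\ref{le:pasting}). Your observation about the axis $\{x=0\}$ is also the right one: any barrier of this radial profile has vanishing gradient there, so the supersolution condition has to be settled through admissibility, a point the paper glosses over. However, the inference ``$\Delta_p^q|x|^{q/(q-1)}=M_1\neq 0$, \emph{so} every $f\in\mathcal F(F)$ satisfies $f(r)=o(r^{q/(q-1)})$'' is not valid as stated — the first fact only excludes $r^{q/(q-1)}$ from $\mathcal F(F)$; it says nothing about an arbitrary $f\in\mathcal F(F)$. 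The conclusion is true but needs its own short argument: for $g(x)=f(|x|)$,
\begin{equation*}
F(\nabla g,D^2g)=|f'(r)|^{q-2}\bigl((p-1)f''(r)+(n-1)\tfrac{f'(r)}{r}\bigr),
\end{equation*}
and both summands are nonnegative since $f',f''>0$ on $(0,\infty)$; so if $n\geq 2$ the membership condition forces $(f'(r))^{q-1}/r\to 0$, while if $n=1$ write $(f')^{q-2}f''=\tfrac{1}{q-1}\tfrac{d}{dr}(f')^{q-1}\to 0$ and integrate from $0$. Either way $f'(r)=o(r^{1/(q-1)})$, and one more integration gives $f(r)=o(r^{q/(q-1)})$, which is exactly what your touching argument on the axis needs.

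The irregularity half is logically inconsistent as written and cannot be fixed along the lines you propose. You require $g\in C(\partial\Theta)$ with $g(0,0)=0$ and a subsolution $v\in\mathcal L_g$ with $\liminf_{\Theta\ni\zeta\to(0,0)}v(\zeta)>0$. But membership in $\mathcal L_g$ forces $\limsup_{\Theta\ni\zeta\to(0,0)}v(\zeta)\leq g(0,0)=0$, and $\liminf\leq\limsup$ then gives $\liminf v\leq 0$, contradicting your requirement; no self-similar profile can escape this, because the obstruction is definitional. The paper instead works on the \emph{super}solution side. It builds the explicit viscosity supersolution
\begin{equation*}
u(x,t)=\frac{|x|^{q/(q-1)}}{(-t)^{qs/(q-1)}}-\frac{n+\frac{p-q}{q-1}}{1-qs}\Bigl(\frac{q}{q-1}\Bigr)^{q-1}(-t)^{1-qs}
\end{equation*}
in $\Theta$ (here $s<1/q$ makes the exponent $1-qs$ positive, so the second term vanishes at $(0,0)$), defines $u(0,0)$ to be the positive limit of $u$ along the lateral boundary so that $f:=u|_{\partial\Theta}$ is continuous, and then uses comparison to get $\Hl f\leq u$ in $\Theta$. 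Since $u(0,t)\to 0<f(0,0)$ as $t\to 0^-$, this yields $\liminf_{\Theta\ni\zeta\to(0,0)}\Hl f(\zeta)\leq 0<f(0,0)$, hence irregularity. The classical ``irregularity barrier'' is a supersolution lying above the Perron solution and jumping up at the bad boundary point; your proposal has the roles and the inequality reversed.
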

\begin{proof}
	We prove irregularity of $(0,0)$ first. We do this by constructing an explicit viscosity supersolution that is continuous on the boundary but jumps when we approach the origin along the axis $x=0$. This is called an irregularity barrier by \cite{Bjorn2017}, \cite{Kilpelainen1996}, and \cite{Petrovksii1935}. Existence of such function ensures that the boundary point cannot be regular directly from the definition.
	Let
	\begin{equation*}
		u(x,t)=\begin{cases}
			\frac{\abs{x}^{\frac{q}{q-1}}}{(-t)^\frac{qs}{q-1}}-\frac{n+\frac{p-q}{q-1}}{1-qs}\left(\frac{q}{q-1}\right)^{q-1}(-t)^{1-qs}, & \text{ for }(x,t)\in\overline{\Theta}\setminus\{(0,0)\},\\
			1, & \text{ for }(x,t)=(0,0).
		\end{cases}
	\end{equation*}
	Using Lemma \ref{le:calc}, we have
	\begin{align*}
		\Delta_{p}^qu(x,t)&=\left(\frac{q}{q-1}(-t)^{-\frac{qs}{q-1}}\right)^{q-1}\left(n+\frac{p-q}{q-1}\right)=\left(\frac{q}{q-1}\right)^{q-1}\frac{n+\frac{p-q}{q-1}}{(-t)^{qs}}.
	\end{align*}
	and by direct calculation
	\begin{equation*}
		\partial_t u(x,t)=\frac{qs}{q-1}\frac{\abs{x}^{\frac{q}{q-1}}}{(-t)^{\frac{qs}{q-1}+1}}+\left(\frac{q}{q-1}\right)^{q-1}\frac{n+\frac{p-q}{q-1}}{(-t)^{qs}}.
	\end{equation*}
	Thus $\partial_{t}u\geq\Delta_{p}^qu$ and hence $u$ is viscosity supersolution to \eqref{eq:rgnppar} in $\Theta$.
	
	Let $f=u\mid_{\partial\Theta}\in C(\partial\Theta)$ and let $v\in\mathcal{L}_f(\Theta)$. By definition of the lower class, we can use the elliptic-type comparison principle Theorem \ref{thm:ecomp} to ensure $v\leq u$ in $\Theta$, and thus also $\underline{H}f\leq u$. But now
	\begin{equation*}
		\liminf_{\Theta \ni(x, t) \rightarrow(0, 0)}\underline{H}f(x,t)\leq\liminf_{\Theta \ni(x, t) \rightarrow(0, 0)}u(x,t)\leq\liminf_{t\to0-}u(0,t)=0<1=f(0,0).
	\end{equation*}
	Hence $(0,0)$ is irregular to equation \eqref{eq:rgnppar} with respect to the set $\Theta$. 
	
	Next, we will show that there still exists a barrier at $(0,0)$. Assume first that $K=1$ and let
	\begin{equation*}
		v(x,t)=(-t)^\frac{1}{2-q}\left(B-\abs{x}^{\frac{q}{q-1}}\right)
	\end{equation*}
	where $B=\min\left\{\left(n+\frac{p-q}{q-1}\right)\left(\frac{q}{q-1}\right)^{q-1}(2-q),1\right\}$.
	Again by Lemma \ref{le:calc}, we have
	\begin{align*}
		\Delta_{p}^qv(x,t)&=-\left((-t)^{\frac{1}{2-q}}\frac{q}{q-1}\right)^{q-1}\left(n+\frac{p-q}{q-1}\right)\\&=-(-t)^{\frac{q-1}{2-q}}\left(\frac{q}{q-1}\right)^{q-1}\left(n+\frac{p-q}{q-1}\right).
	\end{align*}
	For the time derivative, we have
	\begin{equation*}
		\partial_{t}v(x,t)=-\frac{1}{2-q}(-t)^{\frac{q-1}{2-q}}\left(B-\abs{x}^{\frac{q}{q-1}}\right)\geq-\frac{B}{2-q}(-t)^{\frac{q-1}{2-q}}
	\end{equation*}
	and thus by our choice of $B$, we have
	\begin{equation*}
		\partial_{t}v(x,t)-\Delta_{p}^qv(x,t)\geq\left(\left(\frac{q}{q-1}\right)^{q-1}\left(n+\frac{p-q}{q-1}\right)-\frac{B}{2-q}\right)(-t)^{\frac{q-1}{2-q}}\geq0
	\end{equation*} and thus $v$ is a viscosity supersolution to \eqref{eq:rgnppar} in $\Theta$. Next, we define
	\begin{equation*}
		\tilde{\Theta}=\left\{(x,t)\in\Theta:\abs{x}^{\frac{q}{q-1}}<\frac{B}{2}\right\}
	\end{equation*}
	and
	\begin{equation*}
		M=\inf_{(x,t)\in\partial\tilde{\Theta}}v(x,t)=\left(\frac{B}{2}\right)^{1+\frac{q-1}{sq(2-q)}}>0.
	\end{equation*}
	By the pasting lemma (Lemma \ref{le:pasting}), we know that
	\begin{equation*}
		w(x,t)=\begin{cases}
			\min\{v(x,t),M\} & \text{ if }(x,t)\in\tilde{\Theta},\\
			M & \text{ if }(x,t)\in\Theta\setminus\tilde{\Theta},
		\end{cases}
	\end{equation*}
	is a viscosity supersolution to \eqref{eq:rgnppar} in $\Theta$ because it is lower semicontinuous. It also clearly satisfies the other two conditions of being a barrier and thus we have found a barrier at $(0,0)$ despite this point being irregular. The result for general $K>0$ follows from Lemma \ref{le:scaling}.
\end{proof}
If we take a viscosity solution $u$ to equation \eqref{eq:rgnppar} and a constant $c>0$, a simple calculation shows that the function $cu$ is not a viscosity solution unless $q=2$. This also happens for the usual $p$-parabolic equation with $p\not=2$. We get similar phenomena where $cu$ now solves the multiplied equation
\begin{equation}
	\label{eq:mult}
	a\partial_{t}u=\Delta_{p}^qu.
\end{equation}
for $a=c^{q-2}$. It quite surprisingly turns out that regular boundary points are the same for all multiplied equations of this type as long as $q\not=2$ which we will prove next. This is known to be false for the heat equation by the Petrovski\u{\i} condition, see \cite{Petrovksii1935}.
\begin{theorem}
	Let $\xi_0\in\partial\Theta$ and $a>0$. If $q\not=2$, the $\xi_0$ is regular if and only if it is regular to the multiplied equation \eqref{eq:mult}.
\end{theorem}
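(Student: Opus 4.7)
The plan is to exploit the multiplicative scaling $u \mapsto cu$ that links supersolutions of \eqref{eq:rgnppar} with supersolutions of \eqref{eq:mult}, and then invoke the barrier characterization of regularity (Theorem \ref{thm:regbarequiv}). First I would verify how $\Delta_p^q$ scales under multiplication by a constant $c>0$: since $\Delta u$ and $\Delta_\infty^N u$ are homogeneous of degree one in $u$ while $|\nabla u|^{q-2}$ picks up a factor $c^{q-2}$, a direct computation yields $\Delta_p^q(cu) = c^{q-1}\Delta_p^q u$. Hence if $u$ solves $\partial_t u = \Delta_p^q u$, then $v = cu$ satisfies $c^{q-2}\partial_t v = \Delta_p^q v$. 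Because $q \neq 2$, for any $a>0$ we can solve $c^{q-2}=a$ by choosing $c = a^{1/(q-2)}>0$, so the scaling produces a bijection between classical solutions of the two equations.

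Next I would lift this scaling to viscosity supersolutions. The admissibility class $\mathcal{F}(F)$ depends only on the spatial operator $F$, which is common to both equations, and was already noted to be closed under multiplication by positive constants. Thus if $\varphi$ is admissible for \eqref{eq:rgnppar} at $\xi_0$, then $c\varphi$ is admissible for \eqref{eq:mult} at $\xi_0$, since the bounds in Definition \ref{def:admissible} transform correctly with $f/c \in \mathcal{F}(F)$ and $\sigma/c \in \Sigma$; the converse also holds by using $1/c$. Combined with the scaling identity from the previous step, and noting that the critical-point condition $\partial_t \varphi \geq 0$ is unchanged when multiplied by a positive constant $a$, this shows that $u$ is a viscosity supersolution to \eqref{eq:rgnppar} if and only if $cu$ is a viscosity supersolution to \eqref{eq:mult}, and analogously for subsolutions.

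Finally I would apply the barrier characterization. By Theorem \ref{thm:regbarequiv}, regularity of $\xi_0$ for \eqref{eq:rgnppar} is equivalent to the existence of a barrier family $\{w_j\}_{j=1}^\infty$ at $\xi_0$. The rescaled family $\{cw_j\}_{j=1}^\infty$ consists of positive viscosity supersolutions to \eqref{eq:mult} by the previous step, still satisfies $\liminf_{\zeta \to \xi_0} cw_j(\zeta) = 0$, and inherits condition (c) of Definition \ref{def:barfam}: given $k$, choosing an index $j$ for which $\liminf w_j \geq k/c$ yields $\liminf cw_j \geq k$. Hence $\{cw_j\}_{j=1}^\infty$ is a barrier family for \eqref{eq:mult} at $\xi_0$, and by Theorem \ref{thm:regbarequiv} applied to the multiplied equation, $\xi_0$ is regular for \eqref{eq:mult}. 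The converse follows by the symmetric argument with $a$ and $c$ replaced by $1/a$ and $1/c$ respectively.

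The main (and only mildly delicate) obstacle is the second step: one must be careful when transferring the viscosity condition through the scaling at critical points, where the admissibility of test functions is essential. However, this reduces entirely to the observation already noted in Section \ref{sec:pre} that $\mathcal{F}(F)$ is closed under multiplication by positive scalars, after which the rest of the argument fits together mechanically. Note that the hypothesis $q \neq 2$ enters exactly once, namely in being able to solve $c^{q-2}=a$ for arbitrary $a>0$, which parallels the corresponding result for the $p$-parabolic equation with $p \neq 2$ from \cite{Bjorn2015}.
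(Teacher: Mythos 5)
Your scaling observation — that $\Delta_p^q(cu) = c^{q-1}\Delta_p^q u$ so that $u \mapsto a^{1/(q-2)}u$ maps supersolutions of \eqref{eq:rgnppar} to supersolutions of \eqref{eq:mult} — is exactly the key step the paper uses, and your careful discussion of how admissibility is preserved under this scaling is a reasonable expansion of what the paper takes for granted. However, you then route the conclusion through the barrier characterization (Theorem \ref{thm:regbarequiv}) for \emph{both} equations, and this introduces a gap: Theorem \ref{thm:regbarequiv} is proven only for \eqref{eq:rgnppar}, and the multiplied equation $a\partial_t u = \Delta_p^q u$ is not of that form (there is no free constant in front of $\Delta_p^q$). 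To apply the barrier characterization to \eqref{eq:mult} you would need to re-establish it (comparison principle, Perron solutions, stability, etc.) for the multiplied equation, which is possible by repeating all the arguments but is never done in the paper. The paper avoids this entirely by working directly with the definition of regularity: it observes that $u \in \mathcal{U}_f$ if and only if $a^{1/(q-2)}u$ belongs to the corresponding upper class $\mathcal{U}^a$ for \eqref{eq:mult} (with appropriately scaled boundary data), so the Perron solutions themselves scale, and since $f \mapsto a^{1/(q-2)}f$ is a bijection on $C(\partial\Theta)$, regularity for one equation is equivalent to regularity for the other purely from Definition \ref{def:reg}. This is shorter and avoids invoking any machinery that was only established for \eqref{eq:rgnppar}. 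Your proof can be made rigorous, but it would require either proving the barrier characterization for \eqref{eq:mult} or switching to the more direct Perron-class scaling argument the paper uses.
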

\begin{proof}
	Let $w$ be a viscosity supersolution to \eqref{eq:rgnppar} and let $\tilde{w}=a^{\frac{1}{q-2}}w$. Then
	\begin{equation*}
		a\partial_t\tilde{w}-\Delta_{p}^q\tilde{w}=a^{1+\frac{1}{q-2}}\partial_{t}w-a^{1+\frac{1}{q-2}}\Delta_{p}^qw\geq 0
	\end{equation*}
	and thus $\tilde{w}$ is a viscosity supersolution to the multiplied equation \eqref{eq:mult}. We get equivalence by replacing $a$ by $a^{-1}$.
	
	From this it follows that if $u\in\mathcal{U}_f$ if and only if $a^{\frac{1}{q-2}}u\in\mathcal{U}_f^a$, where $\mathcal{U}_f^a$ is the upper class with respect to equation \eqref{eq:mult}. The equivalence of regularity of $\xi_0$ with respect to equation \eqref{eq:rgnppar} and with respect to equation \eqref{eq:mult} follow directly from the definition.
\end{proof}
When $q=2$, our equation becomes the normalized $p$-parabolic equation, and we know that a single barrier is enough to characterize the regularity of a boundary point as proven by \cite[Theorem 4.2]{Banerjee2014}. This seems to be the case because invariance with regard to multiplication means that the existence of a single barrier implies the existence of a barrier family. We will prove this result to end this section. Based on this it would seem that a single barrier is not enough when $q\not=2$. We know this to be the case for $q<2$ by Theorem \ref{thm:counter} but the degenerate case $q>2$ remains an open problem.
\begin{proposition}
	Let $\xi_0\in\partial\Theta$ and $q=2$. There exists a barrier at $\xi_0$ if and only if there exists a barrier family at $\xi_0$.
\end{proposition}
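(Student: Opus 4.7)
My plan is to exploit the defining feature of $q=2$: the operator $\Delta_p^q$ is positively homogeneous of degree one in $u$, so $\Delta_p^q(cu)=c\,\Delta_p^q u$ for $c>0$. Since the admissibility condition of Definition \ref{def:admissible} scales with its test function and since $\lambda\mathcal F(F)=\mathcal F(F)$, $\lambda\Sigma=\Sigma$ for every $\lambda>0$ (as noted right after that definition), admissibility is preserved under multiplication of the test function by a positive constant. Together these give that a positive constant multiple of a viscosity supersolution is again a viscosity supersolution. This scaling invariance is the only feature of $q=2$ that I will use.

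For the direction \emph{barrier family $\Rightarrow$ single barrier} I would not need homogeneity; the argument works for every $q$. Suppose $\xi_0$ admits a barrier family. By Theorem \ref{thm:regbarequiv}, $\xi_0$ is regular. Translating to $\xi_0=(0,0)$, I would take $\psi$ to be the $j=1$ function from the proof of Theorem \ref{thm:regbarequiv}, which is a continuous viscosity subsolution satisfying $\psi(\xi_0)=0$ and $\psi>0$ on $\overline\Theta\setminus\{\xi_0\}$. Set $w:=\Hl\psi$; by Theorem \ref{thm:perronarevisc} $w$ is a viscosity solution, and since $\psi\in\mathcal L_\psi$ we get $w\geq\psi>0$ in $\Theta$. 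Regularity of $\xi_0$ delivers $\liminf_{\zeta\to\xi_0}w(\zeta)=\psi(\xi_0)=0$, and for $\xi\in\partial\Theta\setminus\{\xi_0\}$ continuity of $\psi$ gives $\liminf_{\zeta\to\xi}w(\zeta)\geq\psi(\xi)>0$, so $w$ is a single barrier.

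For the direction \emph{single barrier $\Rightarrow$ barrier family}, which is where $q=2$ is essential, let $w$ be a barrier and define $w_j:=jw$. By the scaling invariance above, each $w_j$ is a positive viscosity supersolution, and clearly $\liminf_{\zeta\to\xi_0}w_j(\zeta)=0$, giving (a) and (b) of Definition \ref{def:barfam}. For (c), set $g(\xi):=\liminf_{\zeta\to\xi}w(\zeta)$. A routine check shows that $g$ is lower semicontinuous on $\partial\Theta$, and the barrier property of $w$ forces $g>0$ on $\partial\Theta\setminus\{\xi_0\}$. Given $k\geq1$, the compact set $S_k:=\{\xi\in\partial\Theta:\abs{\xi-\xi_0}\geq1/k\}$ does not meet $\xi_0$, so lower semicontinuity forces $m_k:=\min_{S_k}g>0$. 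Choosing any integer $j\geq k/m_k$ then yields $\liminf_{\zeta\to\xi}w_j(\zeta)=jg(\xi)\geq k$ for every $\xi\in S_k$, which is (c).

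The main obstacle I anticipate is the admissibility bookkeeping in the first paragraph: when $\nabla\varphi(x_0,t_0)=0$ and $\varphi$ touches $jw$ from below, one must verify that $\varphi/j$ is still admissible at $(x_0,t_0)$ so that the supersolution inequality for $w$ can be applied there. This reduces to the stability of $\mathcal F(F)$ and $\Sigma$ under positive rescaling.
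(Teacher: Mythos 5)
Your proof is correct and follows the same core idea as the paper: for the nontrivial direction you multiply the given barrier by $j$, exploiting that $q=2$ makes the operator positively $1$-homogeneous so that $jw$ remains a supersolution. You are, however, noticeably more careful on one point: the paper's proof picks, for fixed $\xi$ with $\abs{\xi-\xi_0}\geq 1/k$, the value $a=\liminf_{\zeta\to\xi}w(\zeta)>0$ and then chooses $j>k/a$, which makes $j$ depend on $\xi$ rather than only on $k$ as condition (c) of Definition~\ref{def:barfam} requires. Your observation that $g(\xi)=\liminf_{\Theta\ni\zeta\to\xi}w(\zeta)$ is lower semicontinuous on $\partial\Theta$, strictly positive away from $\xi_0$, and hence attains a positive minimum $m_k$ on the compact set $S_k$, supplies precisely the missing uniformity and is the right way to close this. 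For the reverse direction the paper dismisses it as clear, whereas you route through Theorem~\ref{thm:regbarequiv} and the construction $w=\Hl\psi$; this is sound and is in fact just the $j=1$ instance of the family built in that theorem, so no new ideas are needed, but it does make explicit why each member of a barrier family need not itself be a barrier and why one should instead produce the barrier from regularity. Your remark about checking that $\vp/j$ stays admissible when $\nabla\vp(x_0,t_0)=0$ is the correct bookkeeping, and the closure of $\mathcal F(F)$ and $\Sigma$ under positive scaling (noted after Definition~\ref{def:admissible}) handles it.
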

\begin{proof}
	The existence of a barrier family clearly implies the existence of a single barrier. For the other direction assume that $w$ is a barrier at $\xi_0$ and define
	\begin{equation*}
		w_j=jw
	\end{equation*}
	for $j\in\N$. Now $w_j$ are all positive viscosity supersolutions to $\eqref{eq:rgnppar}$ by simple calculation because $q=2$ and still clearly satisfy condition $(b)$ for all $j$. Take $k\in\N$ and $\xi\in\partial\Theta$ such that $\abs{\xi-\xi_0}\geq\frac{1}{k}$. Because $w$ is a barrier, we know that
	\begin{equation*}
		\liminf_{\Theta\ni\zeta\to\xi} w(\zeta)=a>0
	\end{equation*}
	and thus by choosing $j>\frac{k}{a}$, we have
	\begin{equation*}
		\liminf_{\Theta\ni\zeta\to\xi} w_j(\zeta)=k.
	\end{equation*}
	This proves condition $(c)$ and thus we have a barrier family.
\end{proof}
\section{Exterior ball condition}
\label{sec:ext}
In this section, we will state and prove an exterior ball condition, which gives a simple geometric criterion for regularity. It turns out that the existence of an exterior ball touching the domain at a boundary point implies the existence of a suitable barrier family apart from a few exceptions. Consider the Dirichlet problem \eqref{eq:dirichlet} set in the usual time cylinder $\Om_T$. It is well known that in this case, the solution will determine values on the top of the cylinder $\Om\times\{T\}$, so none of these points can be regular. It turns out that if the tangent point of the exterior ball is its north pole or south pole the argument does not work. After this, we prove a different geometric condition that works for the north pole case and end the section by showing that any point that is time-wise earliest in the set, is always regular.
\begin{lemma}[Exterior ball condition]
	Let $\xi_0=(x_0,t_0)\in\partial\Theta$. Suppose that there exists a $\xi_1=(x_1,t_1)\in\Theta^c$ and a radii $R_1>0$ such that $B_{R_1}(\xi_1)\cap\Theta=\emptyset$ and $\xi_0\in \partial B_{R_1}(\xi_1)\cap\partial\Theta$. If $x_1\not=x_0$ then $\xi_0$ is regular with respect to $\Theta$.
	
\end{lemma}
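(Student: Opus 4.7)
The plan is to apply Theorem \ref{thm:regbarequiv} by exhibiting a barrier family at $\xi_0$. By Proposition \ref{prop:regsubset}, regularity is a local property, so it suffices to construct the family on $\Theta \cap B_\delta(\xi_0)$ for a small spacetime ball around $\xi_0$. Translating so that $\xi_0 = (0,0)$, write $\xi_1 = (x_1, t_1)$ with $a := |x_1| > 0$, which is positive precisely because $x_1 \neq x_0$. The exterior ball hypothesis gives $|x - x_1|^2 + (t - t_1)^2 \geq R_1^2 = a^2 + t_1^2$ for every $(x, t) \in \Theta$, and in particular $|x - x_1| \geq \rho(t) := \sqrt{R_1^2 - (t - t_1)^2}$ whenever $|t - t_1| < R_1$.

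Inspired by the construction in the proof of Theorem \ref{thm:regbarequiv}, I would look for barrier functions of the form
\begin{equation*}
w_j(x, t) = A_j\bigl(|x - x_1|^{q/(q-1)} - a^{q/(q-1)}\bigr) + B_j\, t + C_j\, t^2,
\end{equation*}
where $A_j \to \infty$ as $j \to \infty$ and $B_j, C_j$ depend on $A_j$ and on the geometry. Since $\Delta_p^q$ is translation invariant, Lemma \ref{le:calc} yields the constant $\Delta_p^q w_j = A_j^{q-1}\bigl(\tfrac{q}{q-1}\bigr)^{q-1}\bigl(n + \tfrac{p-q}{q-1}\bigr)$, while $\partial_t w_j = B_j + 2C_j t$, so the supersolution inequality $\partial_t w_j - \Delta_p^q w_j \geq 0$ reduces to a linear condition in $t \in (-\delta, \delta)$. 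By construction $w_j(\xi_0) = 0$, and the gradient $\nabla w_j$ is non-zero throughout $\Theta \cap B_\delta(\xi_0)$ since $|x - x_1|$ stays bounded below by $\rho(t) > 0$ there. Once positivity of $w_j$ on $\Theta \cap B_\delta(\xi_0) \setminus \{\xi_0\}$ is established, scaling $A_j$ with $j$ makes $w_j$ arbitrarily large on any set bounded away from $\xi_0$, verifying condition (c) of Definition \ref{def:barfam}.

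The main obstacle is enforcing positivity and the supersolution inequality simultaneously. Positivity is tested on the extremal locus $|x - x_1| = \rho(t)$ inside $\Theta$: Taylor expanding $\rho(t)^{q/(q-1)} - a^{q/(q-1)}$ at $t = 0$ produces a linear term with coefficient proportional to $t_1$, which forces the choice $B_j = -A_j\,\tfrac{q}{q-1}\,a^{(2-q)/(q-1)}\,t_1$, and then $C_j$ must dominate a second-order term whose sign is controlled via the identity $(\alpha-2)t_1^2 - a^2$ with $\alpha = q/(q-1)$ (always negative when $q \geq 2$, variable when $q < 2$). This prescribed sign of $B_j$ then has to be reconciled with the supersolution requirement $B_j + 2C_j t \geq A_j^{q-1}K_0$, a reconciliation that forces the relative scaling of $A_j, B_j, C_j$ to depend on whether $q < 2$, $q = 2$, or $q > 2$, an interplay entirely consistent with Lemma \ref{le:scaling}. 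The hypothesis $x_1 \neq x_0$, i.e. $a > 0$, is essential throughout: it keeps the formula from Lemma \ref{le:calc} valid on the entire localized domain and keeps the quadratic Taylor coefficient non-degenerate, whereas in the excluded north or south pole case the leading Taylor expansion of the spatial term collapses and this barrier approach fails, reflecting the known irregularity of the top cap of a spacetime cylinder.
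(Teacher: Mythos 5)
Your overall framework is correct and matches the paper's strategy: construct a barrier family, apply Theorem \ref{thm:regbarequiv}, and localize via Proposition \ref{prop:regsubset}. However, the specific ansatz
\begin{equation*}
w_j(x,t)=A_j\bigl(|x-x_1|^{q/(q-1)}-a^{q/(q-1)}\bigr)+B_jt+C_jt^2
\end{equation*}
cannot be made to satisfy the supersolution inequality and the positivity requirement simultaneously, and the gap is structural, not a matter of tuning constants. The spatial part is convex, so by Lemma \ref{le:calc} (and translation invariance) $\Delta_p^q w_j=A_j^{q-1}K_0$ with $K_0=\bigl(\tfrac{q}{q-1}\bigr)^{q-1}\bigl(n+\tfrac{p-q}{q-1}\bigr)>0$. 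The supersolution inequality therefore forces $\partial_t w_j=B_j+2C_jt\geq A_j^{q-1}K_0>0$ throughout the $t$-window. But, as you yourself compute, positivity of $w_j$ on the extremal locus $|x-x_1|=\rho(t)$ (which $\Theta$ may approach from both $t>0$ and $t<0$) forces $B_j=-A_j\tfrac{q}{q-1}a^{(2-q)/(q-1)}t_1$. When $t_1>0$ this gives $B_j<0$, and then $\partial_t w_j(x,0)=B_j<0<A_j^{q-1}K_0$, so $w_j$ is strictly a \emph{sub}solution near $t=0$ and the construction fails outright. The case $t_1>0$ is genuinely covered by the lemma (it corresponds to a lateral boundary point touched by an exterior ball lying slightly in the future), so it cannot be discarded. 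Even when $t_1<0$, for $q>2$ the inequality $B_j\gtrsim A_j^{q-1}$ forces $A_j$ to stay bounded, which defeats condition (c) of Definition \ref{def:barfam} since no bounded combination $A_j,B_j,C_j$ can make $w_j$ arbitrarily large on boundary points of the form $(x,0)$. The appeal to Lemma \ref{le:scaling} to ``reconcile'' these scalings does not help: that lemma rescales the spatial variable only and cannot change the sign of $\partial_t w_j-\Delta_p^q w_j$.

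The root problem is the sign of $\Delta_p^q w_j$. The paper avoids it by taking a barrier of the form $w_j(\xi)=\gamma\bigl(e^{-jR_2^2}-e^{-jR^2}\bigr)$ with $R=|\xi-\xi_2|$ the \emph{spacetime} distance to $\xi_2=\tfrac12\xi_1$ and $R_2=\tfrac12 R_1$. On the relevant annulus $R\in(R_2,2R_2)$ and for $j$ large, the spatial Hessian of $e^{-jR^2}$ is dominated by the negative term $-2j(p-1)|x-x_2|^2$, so $\Delta_p^q w_j$ is \emph{negative} (indeed $\lesssim-j^q\gamma^{q-1}e^{-j(q-1)R^2}$), which leaves room for $\partial_t w_j=2j\gamma e^{-jR^2}(t-t_2)$ to be negative as well. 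This is precisely the slack your convex separable ansatz lacks. A separable construction with the $|x|^{q/(q-1)}$ profile can work for the flat-bottom and north-pole cases in Section \ref{sec:ext}, where the geometry constrains $t$ to one side, but for a general exterior ball you need either a concave spatial profile that is still nonnegative on the admissible region (hard to arrange with a power of $|x-x_1|$) or, as in the paper, a genuinely spacetime-radial barrier.
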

\begin{proof}
	The case $q=2$ is proven in \cite[Lemma 4.2]{Bjorn2019}, so we may assume $q\not=2$. Without loss of generality, assume $\xi_0=(0,0)$ and $\partial B_{R_1}(\xi_1)\cap\partial\Theta=\{\xi_0\}$. Let $\xi_2=(x_2,t_2)=\frac12\xi_1$ and $R_2=\frac12R_1$. Also pick $\delta=\frac12\abs{x_2}>0$ and let $\Theta_0=\Theta\cap B_{\delta}(\xi_0)$. Positivity of $\delta$ follows from our assumption $x_1\not=x_0$. For $\xi=(x,t)\in\overline{\Theta}_0$ and $R=\abs{\xi-\xi_2}\leq 2R_2$, define
	\begin{equation*}
		w_j(\xi)=\gamma \left(e^{-jR_2^2}-e^{-jR^2}\right)
	\end{equation*}
	where we will choose $j$ and $\gamma=\gamma(j)>0$ later. We will show that for suitable constants, $w_j$ are a barrier family at $\xi_0$. We have
	\begin{align*}
		\partial_t w_j(\xi)&=2j\gamma e^{-jR^2}(t-t_2)\geq-4j\gamma R_2e^{-jR^2},\\
		\nabla w_j(\xi)&=2j\gamma e^{-jR^2}(x-x_2),
	\end{align*}
	which means that, $\xi^i$ denoting the $i$:th coordinate of vector $\xi$,
	\begin{align*}
		\Delta_{p}w_j(\xi)&=\div\left(\abs{2j\gamma e^{-jR^2}(x-x_2)}^{p-2}2j\gamma e^{-jR^2}(x-x_2)\right)\\
		&=(2j\gamma)^{p-1}\sumn\left[\left(-2j(p-1)e^{-j(p-1)R^2}\abs{x-x_2}^{p-2}(x^i-x_2^i)^2\right)\right.\\&\left.\quad +\left((p-2)e^{-j(p-1)R^2}\abs{x-x_2}^{p-4}(x^i-x_2^i)^2\right)+\left(e^{-j(p-1)R^2}\abs{x-x_2}^{p-2}\right)\right]\\
		&=(2j\gamma)^{p-1}e^{-j(p-1)R^2}\abs{x-x_2}^{p-2}\left[-2j(p-1)\abs{x-x_2}^2 +p-2+n\right]
	\end{align*}
	and further
	\begin{align*}
		\Delta_{p}^qw_j(\xi)&=\abs{2j\gamma e^{-jR^2}(x-x_2)}^{q-p}\Delta_{p}w_j(\xi)\\
		&=(2j\gamma)^{q-1}e^{-j(q-1)R^2}\abs{x-x_2}^{q-2}\left[-2j(p-1)\abs{x-x_2}^2 +p-2+n\right]\\
		&\leq(2j\gamma)^{q-1}e^{-j(q-1)R^2}\abs{x-x_2}^{q-2}\left[-2j(p-1)\delta^2 +p-2+n\right],
	\end{align*}
	because $\abs{x-x_2}\geq\delta$. Now choose $j_0>\frac{p-2+n}{(p-1)\delta^2}$ to be an integer, so that for any $j\geq j_0$, we have
	\begin{equation*}
		-2j(p-1)\delta^2 +p-2+n\leq-j(p-1)\delta^2,
	\end{equation*}
	which implies
	\begin{align*}
		\Delta_{p}^qw_j(\xi)&\leq-(2j\gamma)^{q-1}j(p-1)\delta^2e^{-j(q-1)R^2}\abs{x-x_2}^{q-2}\\
		&\leq-C_0(2j\gamma)^{q-1}j(p-1)e^{-j(q-1)R^2},
	\end{align*}
	where
	\begin{equation*}
		C_0=\begin{cases}
			(2R_2)^{q-2}\delta^2, & 1<q<2,\\
			\delta^q, & q>2.
		\end{cases}
	\end{equation*}
	Based on these calculations we have that $w_j$ is a viscosity supersolution to \eqref{eq:rgnppar} for all $j\geq j_0$ if
	\begin{equation*}
		4j\gamma R_2e^{-jR^2}\leq C_0(2j\gamma)^{q-1}j(p-1)e^{-j(q-1)R^2},
	\end{equation*}
	which is equivalent to
	\begin{equation}
		\label{eq:exball1}
		\gamma^{q-2}\geq\frac{j^{1-q}R_2e^{j(q-2)R^2}}{2^{q-3}(q-1)C_0}=C_1j^{1-q}e^{j(q-2)R^2}
	\end{equation}
	for $C_1=\frac{R_2}{2^{q-3}(q-1)C_0}$. Now we choose
	\begin{equation}
		\label{eq:exball2}
		\gamma=\gamma(j)=\begin{cases}
			\left(C_1j^{1-q}\right)^{\frac{1}{q-2}}e^{jR_2^2}, & 1<q<2,\\
			\left(C_1j^{1-q}\right)^{\frac{1}{q-2}}e^{4jR_2^2}, & q>2.
		\end{cases}
	\end{equation}
	Because $B_{R_2}(\xi_2)\cap\Theta_0$ is empty, we necessarily have $R\in(R_2,2R_2)$ and thus for this $\gamma$, the estimate \eqref{eq:exball1} holds and thus $w_j$ is a positive viscosity supersolution to \eqref{eq:rgnppar} for all $j\geq j_0$. 
	
	We still need to check the rest of the conditions of Definition \ref{def:barfam} to ensure that $w_j$ forms a barrier family. Condition $(b)$ clearly holds by continuity of $w_j$ at $\xi_0$. For condition $(c)$, let $\beta$ be the angle between vectors $-\xi_1$ and $\xi-\xi_1$ and denote $r_0=\abs{\xi}$ and $r_1=\abs{\xi-\xi_1}$. Using the cosine theorem, we get the equalities
	\begin{equation*}
		R^2=r_1^2+\left(\frac{R_1}{2}\right)^2-r_1R_1\cos\beta \quad \text{ and } \quad r_0^2=r_1^2+R_1^2-2r_1R_1\cos\beta.
	\end{equation*}
	Using these one after another and lastly the inequality $r_1\geq R_1$, we get
	\begin{align*}
		R^2-R_2^2&=\left(\frac{R_1}{2}\right)^2+r_1^2-r_1R_1\cos\beta-\left(\frac{R_1}{2}\right)^2\\
		&=r_1^2-\frac{1}{2}(r_1^2+R_1^2-r_0^2)=\frac{1}{2}r_1^2-\frac{R_1}{2}^2+\frac{1}{2}r_0^2\\
		&\geq\frac{1}{2}r_0^2.
	\end{align*}
Using this we can estimate the value of the barrier function. For any $r>0$ and $\xi\in\overline{\Theta}_0\setminus B_r(x_0)$,
\begin{equation*}
	w_j(\xi)=\gamma e^{-jR_2^2}(1-e^{j(R_2^2-R^2)})\geq\gamma e^{-jR_2^2}(1-e^{-\frac{j}{2}r^2}).
\end{equation*}
Inserting our choices of $\gamma$ from equation \eqref{eq:exball2}, we have an estimate
\begin{equation}
	\label{eq:exball3}
	w_j(\xi)\geq\begin{cases}
		\left(C_1j^{1-q}\right)^{\frac{1}{q-2}}(1-e^{-\frac{j}{2}r^2}), & 1<q<2,\\
		\left(C_1j^{1-q}\right)^{\frac{1}{q-2}}e^{3jR_2^2}(1-e^{-\frac{j}{2}r^2}), & q>2.
	\end{cases}
\end{equation}
In either case, the right-hand side tends to $\infty$ as $j\to\infty$ for any fixed $r$. Thus for any $k\in\N$ we can pick $r=\frac{1}{k}$ and equation \eqref{eq:exball3} implies
\begin{equation*}
	\liminf_{\Theta\ni\zeta\to\xi}w_j(\zeta)\geq k
\end{equation*}
for some large $j$ and any $\xi\in\overline{\Theta}_0\setminus B_{r}(x_0)$. Thus condition $(c)$ holds and $w_j$ forms a barrier family. Thus by Theorem \ref{thm:regbarequiv} the point $\xi_0$ is regular to equation \eqref{eq:rgnppar} with respect to the set $\Theta_0$. The regularity with respect to the set $\Theta$ follows from Proposition \ref{prop:regsubset}.
\end{proof}
Similarly to the $p$-parabolic case, this proof does not work when $\xi_0$ is the north pole or the south pole of the ball. In the north pole case, we get a result matching to \cite[Proposition 4.2]{Bjorn2015}.
\begin{proposition}
	\label{prop:north}
	Let $\Theta\in\Rnn$ be open and $(x_0,t_0)\in\partial\Theta$. Assume that for some $\theta>0$, we have
	\begin{equation*}
		\Theta\subset\{(x,t)\mid t-t_0>-\theta\abs{x-x_0}^l\},
	\end{equation*}
	where $l\geq\frac{q}{q-1}$ if $1<q<2$, and $l>q$ if $q>2$. Then $(x_0,t_0)$ is regular with respect to $\Theta$.
\end{proposition}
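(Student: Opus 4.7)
My plan is to produce a barrier family at $(x_0,t_0)$ and conclude by Theorem \ref{thm:regbarequiv}. After translating so that $(x_0,t_0)=(0,0)$, Proposition \ref{prop:regsubset} lets me replace $\Theta$ by $\Theta_0:=\Theta\cap B_\delta(0,0)$ for a small radius $\delta>0$ to be chosen; the cusp inclusion $\Theta\subset\{t>-\theta\abs{x}^l\}$ is preserved.

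The central building block is the explicit prototype
\[
v(x,t)=A\abs{x}^{l}+Ct, \qquad A,C>0,
\]
a variant of the functions used in the proof of Theorem \ref{thm:regbarequiv} and in Section \ref{sec:count}. A direct computation paralleling Lemma \ref{le:calc} gives, for $x\neq 0$,
\[
\Delta_p^q v=K_0 A^{q-1}\abs{x}^{l(q-1)-q}, \qquad \partial_t v=C,
\]
with $K_0:=l^{q-1}[l(p-1)+n-p]>0$. The hypothesis on $l$ is calibrated precisely so that $l(q-1)-q\geq 0$, with equality only at the endpoint $1<q<2$, $l=q/(q-1)$, and strict inequality whenever $q>2$ and $l>q$. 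Hence $\abs{x}^{l(q-1)-q}\leq\delta^{l(q-1)-q}$ throughout $B_\delta$, and $v$ is a classical supersolution wherever $\nabla v\neq 0$ provided $C\geq K_0 A^{q-1}\delta^{l(q-1)-q}$. At the only points where $\nabla v=0$, namely $\{x=0\}$, any admissible $\phi\in C^2$ touching $v$ from below must satisfy $\nabla\phi=0$ and $\partial_t\phi=C$ by the maximum principle, so the viscosity condition $\partial_t\phi\geq 0$ holds for every $C>0$. Positivity of $v$ in $\Theta_0$ follows from the cusp bound $t>-\theta\abs{x}^l$, which gives $v\geq(A-C\theta)\abs{x}^l\geq 0$ whenever $A>C\theta$. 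Combining the two inequalities reduces everything to the single constraint $A^{2-q}>K_0\theta\delta^{l(q-1)-q}$, satisfiable for any $\theta>0$: enlarge $A$ at fixed $\delta$ when $1<q<2$, or shrink $\delta$ to widen the admissible range of $A$ when $q>2$.

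To promote this single barrier to a barrier family $\{w_j\}$, I select parameters $(A_j,C_j,\delta_j,M_j)$ for each $j$ and paste $v_j=A_j\abs{x}^l+C_j t$ against a large constant via Lemma \ref{le:pasting}:
\[
w_j(x,t):=\begin{cases}\min\{A_j\abs{x}^l+C_j t,\;M_j\} & \text{in }\Theta\cap B_{\delta_j},\\ M_j & \text{in }\Theta\setminus B_{\delta_j}.\end{cases}
\]
Each $w_j$ is lower semicontinuous, so the pasting lemma delivers a viscosity supersolution on $\Theta$. Taking $M_j\geq k$ handles growth condition (c) of Definition \ref{def:barfam} at boundary points with $\abs{\xi}\geq\delta_j$; for $\xi\in\partial\Theta$ with $1/k\leq\abs{\xi}<\delta_j$ one combines the lower bound $w_j(\xi)\geq(A_j-C_j\theta)\abs{x_\xi}^l$ with the inherited constraint $\abs{t_\xi}\leq\theta\abs{x_\xi}^l$ on $\partial\Theta\subset\{t\geq-\theta\abs{x}^l\}$ to turn a lower bound on $\abs{\xi}$ into a lower bound on $\abs{x_\xi}$, which forces $w_j(\xi)\geq k$ for suitable $j=j(k)$. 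The main obstacle I expect is the degenerate regime $q>2$: since $2-q<0$, the admissible $A_j$ is bounded by a function of $\delta_j$, so I must let $\delta_j\to 0$ jointly with $A_j\to\infty$, and the strict inequality $l>q$ is exactly what supplies enough slack for this joint limit to produce arbitrarily large $A_j-C_j\theta$. In the singular regime $1<q<2$ the scaling is easier since $A^{2-q}$ is increasing in $A$, so $\delta_j$ may be fixed while $A_j,C_j,M_j\to\infty$ directly.
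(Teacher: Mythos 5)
Your construction is correct, but it takes a genuinely different route from the paper's. The paper builds its barrier family from the \emph{exact} viscosity solutions $f_j(x,t)=j\tfrac{q-1}{q}\abs{x}^{q/(q-1)}+(n+\tfrac{p-q}{q-1})j^{q-1}t$ (these have $\partial_t f_j=\Delta_p^q f_j$ identically, by Lemma~\ref{le:calc}), pastes them against constants on anisotropic shrinking neighborhoods $G^j=\{\abs{x}<j^{-1/s},\,-\theta j^{-l/s}<t<0\}$, and tunes the auxiliary exponent $s$ so that $\inf_{\Theta\cap\partial G^j}f_j\to\infty$; the two constraints $s>\tfrac{q}{q-1}$ and $l>s(q-2)+\tfrac{q}{q-1}$ on $s$ are then exactly solvable under the stated hypothesis on $l$. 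You instead use the cusp exponent $l$ directly in the spatial part, taking $v_j=A_j\abs{x}^l+C_jt$ as a \emph{strict} supersolution on a space-time ball and pasting on $\Theta\cap B_{\delta_j}$, then calibrating $(A_j,C_j,\delta_j)$. Your approach reads more geometrically off the cusp, while the paper's reads more naturally off the PDE (no cancellation of $\Delta_p^q v$ against $\partial_t v$ is needed there); both come down to the same arithmetic, and when one works through your balancing in the degenerate case --- e.g.\ taking $C_j=K_0A_j^{q-1}\delta_j^{l(q-1)-q}$ and $A_j\asymp\delta_j^{-(l(q-1)-q)/(q-2)}$ --- the lower bound at the bottom of the pasting region scales like $\delta_j^{\,l-(l(q-1)-q)/(q-2)}$, whose exponent is negative precisely when $l>q$, recovering the paper's hypothesis exactly as you predicted.

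Two small repairs are needed to make your sketch airtight. First, the claim that $\partial\Theta\subset\{t\geq-\theta\abs{x}^l\}$ gives the \emph{two-sided} bound $\abs{t_\xi}\le\theta\abs{x_\xi}^l$ is false for boundary points with $t_\xi>0$ (all of $\{t>0\}$ lies above the cusp). This doesn't sink the argument: for $t_\xi\le 0$ your reasoning gives a lower bound on $\abs{x_\xi}$ from a lower bound on $\abs{\xi}$, while for $t_\xi>0$ one instead uses $\abs{x_\xi}^2+t_\xi^2\ge k^{-2}$ to conclude $v_j(\xi)\ge\max\{A_j\abs{x_\xi}^l,\,C_jt_\xi\}\ge\min\{A_j(k\sqrt2)^{-l},\,C_j/(k\sqrt2)\}$, which also blows up with $j$. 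Second, the lower semicontinuity of the pasted $w_j$ across $\partial B_{\delta_j}\cap\Theta$ requires $M_j\le\inf_{\overline\Theta\cap\partial B_{\delta_j}}v_j$; you need to spell out this infimum (split again into the $t\le0$ and $t>0$ arcs of the sphere) and then verify it tends to $\infty$ under your parameter choices, which is exactly the computation I indicated above. With those two patches the argument closes.
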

\begin{proof}
	Without loss of generality, we may assume $(x_0,t_0)=(0,0)$.
	Let
	\begin{equation*}
		\Theta_0=\left\{(x,t)\mid t>-\theta\abs{x}^l \text{ and } -1<t<0 \right\}
	\end{equation*}
	and
	\begin{equation*}
		G^j=\left\{(x,t)\in\Theta_0\,\Bigg|\, \abs{x}<\frac{1}{j^{\frac{1}{s}}} \text{ and } -\frac{\theta}{j^{\frac{l}{s}}}<t<0\right\},
	\end{equation*}
	where $s>0$ will be fixed later. Note that $G_{j+1}\subset G_j$ for all $j$.
	Now let
	\begin{equation*}
		f_j(x,t)=j\frac{q-1}{q}\abs{x}^{\frac{q}{q-1}}+\left(n+\frac{p-q}{q-1}\right)j^{q-1}t
	\end{equation*}
	and use Lemma \ref{le:calc} to conclude that
	\begin{equation*}
		\Delta_{p}^qf_j(x,t)=\left(j\frac{q-1}{q}\frac{q}{q-1}\right)^{q-1}\left(n+\frac{p-q}{q-1}\right)=j^{q-1}\left(n+\frac{p-q}{q-1}\right)
	\end{equation*}
	and
	\begin{equation*}
		\partial_tf_j(x,t)=\left(n+\frac{p-q}{q-1}\right)j^{q-1}.
	\end{equation*}
	We can see that $f_j$ is a viscosity solution to $\eqref{eq:rgnppar}$ in $\Rnn$.
	Define
	\begin{align*}
		m_j:&=\inf_{\Theta\cap\partial G^j}f_j=j\frac{q-1}{q}\left(\frac{1}{j^\frac{1}{s}}\right)^{\frac{q}{q-1}}-\left(n+\frac{p-q}{q-1}\right)j^{q-1}\frac{\theta}{j^{\frac{l}{s}}}\\
		&=\frac{q-1}{q}j^{1-\frac{q}{s(q-1)}}-\left(n+\frac{p-q}{q-1}\right)\theta j^{q-1-\frac{l}{s}}.
	\end{align*}
	We want $m_j\to\infty$ as $j\to\infty$ to construct a barrier family. Note that the coefficient of the second term is always positive so we only need to take care of the exponents. We must have
	\begin{equation*}
		1-\frac{q}{s(q-1)}>0 \quad \text{ and } \quad 1-\frac{q}{s(q-1)}>q-1-\frac{l}{s}
	\end{equation*}
	i.e.
	\begin{equation}
		\label{eq:north1}
		s>\frac{q}{q-1} \quad \text{ and } \quad l>s(q-2)+\frac{q}{(q-1)}.
	\end{equation}
	The latter condition gives us the two cases depending on $q$. If $1<q<2$, the first term on the right-hand side is negative and we have
	\begin{equation*}
		l>\frac{q}{q-1} \quad \text{ for all } \quad s>\frac{q}{q-1}.
	\end{equation*}
	If $q>2$, the first term on the right-hand side is positive and we have $l>q$ if we choose $s$ sufficiently close to $\frac{q}{q-1}$. But we see that there exists a $s$ that is suitable for both of these cases.
	
	Now we define
	\begin{equation*}
		h_j=\begin{cases}
			\min\{f_j,m_j\} & \text{ in } G^j \\
			m_j & \text{ in } \Theta_0\setminus G^j.
		\end{cases}
	\end{equation*}
	Now using the pasting lemma (Lemma \ref{le:pasting}) for $f_j\mid_{G^j}$ and $m_j$, we have that $h_j$ is a positive viscosity supersolution to \eqref{eq:rgnppar} in $\Theta_0$ provided we choose $j$ large enough. Conditions $(a)$ and $(b)$ of the definition of a barrier Definition \ref{def:barfam} are clearly satisfied. 
	
	Now for these $s$ and $l$ satisfying \eqref{eq:north1}, we have $m_j\to\infty$ and $\abs{\xi}\to0$ for all $\xi\in G_j$ as $j\to\infty$ and thus for any $k$, we are able to find some large $j(k)$ so condition $(c)$ is satisfied. The family of functions $h_j$ is thus a barrier family at $(0,0)$. Thus by Theorem \ref{thm:regbarequiv}, the point $(0,0)$ is regular with respect to $\Theta_0$.
	
	Regularity with respect to $\Theta$ follows by picking an open ball $B$ containing $(0,0)$, using Corollary \ref{cor:regsubset} for $\Theta_0\cap B$ and then Proposition \ref{prop:regsubset}.
\end{proof}
Finally, if the bottom of the set is flat, we get regularity for all of these points from the following useful lemma. It turns out that the earliest points time-wise are always regular.
\begin{lemma}
	Let $\xi_0=(x_0,t_0)\in\partial\Theta$. If $\xi_0\not\in\partial\Theta_-$ for
	\begin{equation*}
		\Theta_-=\{(x,t)\in\Theta\mid t<t_0\},
	\end{equation*}
	then $\xi_0$ is regular with respect to $\Theta$. In particular, this holds if $\Theta_-=\emptyset$.
\end{lemma}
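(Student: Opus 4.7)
The plan is first to reduce to a small neighborhood using that regularity is a local property. The hypothesis $\xi_0\notin\partial\Theta_-$, together with $\xi_0\notin\Theta_-$ (since $t_0\not<t_0$), means that $\xi_0\notin\overbar{\Theta_-}$, so there is an open ball $B$ centered at $\xi_0$ with $B\cap\Theta_-=\emptyset$. Thus every point of $\Theta':=\Theta\cap B$ satisfies $t\geq t_0$. By Proposition \ref{prop:regsubset} it suffices to prove regularity of $\xi_0$ with respect to $\Theta'$, and after translating I may assume $\xi_0=(0,0)$ so that $\Theta'\subset\{t\geq 0\}$.

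On this localized set I construct an explicit barrier family. For $j\in\N$ set
\[
	w_j(x,t)=\frac{j(q-1)}{q}\abs{x}^{q/(q-1)}+a_jt, \qquad a_j=j^{q-1}\left(n+\frac{p-q}{q-1}\right),
\]
where $a_j>0$ since $n+(p-q)/(q-1)=(n(q-1)+p-q)/(q-1)>0$ for $p,q>1$. Lemma \ref{le:calc} applied with $\beta=0$ gives $\Delta_p^qw_j(x,t)=a_j$ at every point with $x\neq 0$, while $\partial_tw_j=a_j$ everywhere, so the equation is satisfied classically off the critical set $\{x=0\}$. At any point $(0,t_*)\in\Theta'$ the function $w_j$ is $C^1$ with $\nabla w_j=0$ and $\partial_tw_j=a_j>0$, so any admissible test function $\vp$ touching $w_j$ from below is forced, by the standard minimum argument applied to the $C^1$ difference $w_j-\vp$, to satisfy $\nabla\vp(0,t_*)=0$ and $\partial_t\vp(0,t_*)=a_j>0$; hence the degenerate viscosity supersolution condition $\partial_t\vp\geq 0$ holds. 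This shows $w_j$ is a viscosity supersolution on $\Theta'$.

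The three conditions of Definition \ref{def:barfam} are then checked quickly: $w_j>0$ on $\Theta'$ because $t\geq 0$ there and the only zero of $w_j$ is $\xi_0\notin\Theta'$; $\liminf_{\Theta'\ni\zeta\to\xi_0}w_j(\zeta)=0$ by continuity; and for any $k\in\N$ and $\xi=(x,t)\in\partial\Theta'$ with $\abs{\xi-\xi_0}\geq 1/k$, one of $\abs{x}$ and $t$ is at least $1/(k\sqrt{2})$, so the corresponding term of $w_j(\xi)$ exceeds $k$ once $j$ is large enough. Theorem \ref{thm:regbarequiv} then yields that $\xi_0$ is regular with respect to $\Theta'$, and by Proposition \ref{prop:regsubset} also with respect to $\Theta$. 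The only subtle step is verifying the supersolution condition on the singular set $\{x=0\}$; this is handled cleanly precisely because the choice of $a_j$ is strictly positive, so the forced match of time derivatives at a touching point immediately yields the degenerate condition.
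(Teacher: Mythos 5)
Your proof is correct and uses essentially the same barrier family as the paper, namely $f_j(x,t)=j\frac{q-1}{q}\abs{x-x_0}^{q/(q-1)}+\left(n+\frac{p-q}{q-1}\right)j^{q-1}(t-t_0)$ together with Lemma \ref{le:calc} and Theorem \ref{thm:regbarequiv}. The only differences are that you first localize via Proposition \ref{prop:regsubset} to guarantee positivity of the barriers and you verify the supersolution condition on the singular set $\{x=x_0\}$ explicitly via the $C^1$ touching argument, both points the paper passes over more quickly but which your write-up handles cleanly.
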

\begin{proof}
	Let
	\begin{equation*}
		f_j(x,t)=j\frac{q-1}{q}\abs{x-x_0}^{\frac{q}{q-1}}+\left(n+\frac{p-q}{q-1}\right)j^{q-1}(t-t_0).
	\end{equation*}
	For any $(x,t)\not\in\partial\Theta_-$, we can use Lemma \ref{le:calc} to conclude
	\begin{equation*}
		\Delta_{p}^qf_j(x,t)=j^{q-1}\left(n+\frac{p-q}{q-1}\right)=\partial_tf_j(x,t),
	\end{equation*}
	which implies that $f_j$ are positive viscosity solutions to \eqref{eq:rgnppar} in $\Rnn$ for all $j$. They also clearly satisfy condition $(b)$ of Definition \ref{def:barfam} and lastly for any $k=1,2,\dots$, $r=\frac{1}{k}$ and $\xi\in\Theta\setminus B_r(\xi_0)$, we can ensure
	\begin{equation*}
		\liminf_{\Theta\ni\zeta\to\xi}f_j(\zeta)\geq j\frac{q-1}{q}r^{\frac{q}{q-1}}+\left(n+\frac{p-q}{q-1}\right)j^{q-1}r\geq k
	\end{equation*}
	by picking a large $j$, which implies condition $(c)$. Thus $f_j$ form a barrier family in $\Theta$ at a point $\xi_0$ and thus by Theorem \ref{thm:regbarequiv}, the point $\xi_0$ is regular with respect to $\Theta$.
\end{proof}
There remain many other geometric conditions known for the $p$-parabolic equation that could be expanded for equation \eqref{eq:rgnppar} in the future.
\section{Superparabolic}
\label{sec:app}
Our definition of barriers differs from what was used by Björn, Björn, Gianazza, and Parviainen in their paper, and we will prove in this final section that the definitions coincide. The definition is otherwise the same but they assume the function satisfies the comparison principle in arbitrary time cylinders instead of directly defining them to be viscosity supersolutions. This type of function has various names in the literature for different equations. For the usual $p$-parabolic equation these are sometimes called $p$-superparabolic in the literature and generalized supersolutions for the normalized equation in \cite{Banerjee2014}. We will just use the term superparabolic for simplicity.
\begin{definition}
	A function $u:\Theta\to(-\infty,\infty]$ is superparabolic to equation \eqref{eq:rgnppar} in $\Theta$ if
	\begin{itemize}
		\item [(i)]$u$ is lower semicontinuous,
		\item [(ii)]$u$ is finite in a dense subset of $\Theta$,
		\item [(iii)]$u$ satisfies the following comparison principle on each space-time cylinder $\Om_{t_1,t_2}$: If $v\in C(\overline{\Om}_{t_1,t_2})$ is a viscosity solution to \eqref{eq:rgnppar} in $\Om_{t_1,t_2}$ satisfying $v\leq u$ on $\partial_{p}\Om_{t_1,t_2}$, then $v\leq u$ in $\Om_{t_1,t_2}$.
	\end{itemize}
\end{definition}
Superparabolic functions defined in this way for $p$-parabolic and normalized $p$-parabolic equations are the same as the corresponding viscosity solutions as shown in \cite{Banerjee2014} and \cite{Juutinen2001}. We will prove this same result for equation \eqref{eq:rgnppar}.
\begin{lemma}
	\label{le:barrierdef}
	In a given domain, the viscosity supersolutions and superparabolic functions to \eqref{eq:rgnppar} are the same.
\end{lemma}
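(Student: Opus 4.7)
My plan is to prove the two inclusions separately, with the harder direction handled by contradiction.

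\textbf{Forward direction (every viscosity supersolution is superparabolic).} Conditions (i) and (ii) are already contained in Definition \ref{def:super}. For the comparison property (iii), I would take a cylinder $\Omega_{t_1,t_2} \Subset \Theta$ together with a viscosity solution $v \in C(\overline{\Omega}_{t_1,t_2})$ satisfying $v \leq u$ on $\partial_p \Omega_{t_1,t_2}$, and then verify the hypothesis of Theorem \ref{thm:comp}: continuity of $v$ gives $\limsup_{(y,s) \to (x,t)} v(y,s) = v(x,t)$, while lower semicontinuity of $u$ combined with the boundary inequality produces $v(x,t) \leq u(x,t) \leq \liminf_{(y,s) \to (x,t)} u(y,s)$ at each $(x,t) \in \partial_p \Omega_{t_1,t_2}$. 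Theorem \ref{thm:comp} then yields $v \leq u$ inside $\Omega_{t_1,t_2}$, giving (iii).

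\textbf{Reverse direction (every superparabolic function is a viscosity supersolution).} I would argue by contradiction, assuming the supersolution inequality of Definition \ref{def:super} fails at some $\xi_0 = (x_0,t_0) \in \Theta$ for an admissible $\varphi \in C^2(\Theta)$ touching $u$ from below. In the generic case $\nabla \varphi(\xi_0) \neq 0$, continuity forces $\partial_t \varphi - \Delta_p^q \varphi < 0$ throughout a neighborhood $U$ of $\xi_0$, making $\varphi$ a classical strict subsolution there. I would then set $\psi = \varphi - \tau(|x-x_0|^2 + (t-t_0)^2)$ for small $\tau > 0$, which preserves strict subsolvability and enforces $\psi < u$ on the punctured neighborhood of $\xi_0$, with equality at $\xi_0$. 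Picking a cylinder $Q = B_r(x_0) \times (t_0-r^2, t_0+r^2) \Subset U$, noting $\xi_0 \notin \partial_p Q$, and exploiting the lower semicontinuity of $u - \psi$ on the compact set $\partial_p Q$, one finds $\delta > 0$ with $\psi + \delta \leq u$ on $\partial_p Q$. Theorem \ref{thm:existball}, applied after translation and rescaling, then produces a viscosity solution $v \in C(\overline{Q})$ with $v = \psi + \delta$ on $\partial_p Q$. The superparabolic hypothesis forces $v \leq u$ in $Q$, while Theorem \ref{thm:comp} applied to the subsolution $\psi + \delta$ against the solution $v$ forces $\psi + \delta \leq v$ in $Q$. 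Evaluating at $\xi_0$ gives $u(\xi_0) = \psi(\xi_0) + \delta \leq v(\xi_0) \leq u(\xi_0)$, contradicting $\delta > 0$.

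\textbf{The main obstacle: the singular case $\nabla \varphi(\xi_0) = 0$.} Here $\Delta_p^q \varphi$ is not classically defined at $\xi_0$ and $\varphi$ cannot directly serve as a subsolution. The plan is to extract a usable subsolution from the admissibility estimate of Definition \ref{def:admissible}, which furnishes $f \in \mathcal{F}(F)$ and $\sigma \in \Sigma$ with
$$\varphi(x,t) \geq \varphi(\xi_0) + \partial_t \varphi(\xi_0)(t-t_0) - f(|x-x_0|) - \sigma(t-t_0)$$
near $\xi_0$, with equality at $\xi_0$, and take the right-hand side as $\psi$. The normalizations $f(0)=f'(0)=f''(0)=0$ and $\sigma(0)=\sigma'(0)=0$ give $\psi(\xi_0)=u(\xi_0)$ and $\nabla \psi(\xi_0)=0$, while the defining property $\lim_{x \to x_0,\, x \neq x_0} F(\nabla g, D^2 g) = 0$ for $g(x) = f(|x-x_0|)$, together with $\partial_t \varphi(\xi_0) < 0$, would show that $\psi$ is a viscosity strict subsolution in a neighborhood of $\xi_0$: classically where $\nabla \psi \neq 0$, and at $x = x_0$ by inspecting admissible test functions, whose time derivative at $\xi_0$ must inherit the strict negativity of $\partial_t \psi(\xi_0) = \partial_t \varphi(\xi_0)$. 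After a further perturbation by $-\tau(|x-x_0|^2 + (t-t_0)^2)$ to secure strict inequality $\psi < u$ off $\xi_0$, the argument concludes exactly as in the generic case.
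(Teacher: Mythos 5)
Your proof is correct and follows essentially the same approach as the paper: the forward direction is the comparison principle (Theorem \ref{thm:comp}), and the reverse direction argues by contradiction, splitting into the cases $\nabla\varphi(\xi_0)\neq 0$ and $\nabla\varphi(\xi_0)=0$, in the latter building the extremal admissible comparison function from $f$ and $\sigma$, solving the Dirichlet problem on a small cylinder via Theorem \ref{thm:existball}, and invoking the superparabolic comparison together with Theorem \ref{thm:comp}. The only differences are presentational: you add the $\tau$-perturbation to secure the strict boundary margin (the paper obtains $\varphi+\delta\leq u$ on $\partial_p Q_\rho$ directly from the definition of touching from below and lower semicontinuity), and you verify by hand that the singular comparison function is a viscosity subsolution near $\xi_0$ where the paper cites \cite[Lemma~4.1]{Ohnuma1997}.
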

\begin{proof}
	A viscosity supersolution is clearly superparabolic because conditions (i) and (ii) already match and supersolution satisfies the comparison principle Theorem \ref{thm:comp}. 
	
	Let $\Theta\subset\Rnn$ and $u$ be a superparabolic to \eqref{eq:rgnppar} in $\Theta$. We assume thriving for a contradiction that $u$ is not a viscosity supersolution in $\Theta$. Then we must have a point $(x_0,t_0)\in\Theta$ and at least one admissible $\vp\in C^2(\Theta)$ that touches $u$ at $(x_0,t_0)$ from below but we have one of the following cases.
	
	\textbf{Case 1:} $\partial_{t}\vp(x_0,t_0)-\Delta_{p}^q\vp(x_0,t_0)<0$ and $\nabla\vp(x_0,t_0)\not=0$. Because the inequality is strict, we necessarily have that for small $\rho>0$ the function $\vp$ is a classical subsolution to \eqref{eq:rgnppar} inside a small cylinder $Q_\rho$. By the definition of touching from below, it is possible for us to choose $\rho$ so small that we can pick $\delta>0$ so that \begin{equation*}
		\vp+\delta\leq u \text{ on } \partial_{p}Q_\rho.
	\end{equation*} Let $v$ be a viscosity solution to the Dirichlet problem \eqref{eq:dirichlet} with $\vp+\delta$ as boundary values on the set $Q_\rho$. This exists by Theorem \ref{thm:existball}. Because $\vp+\delta$ is a subsolution in this set, we can use the comparison principle Theorem \ref{thm:comp} to deduce $\vp+\delta\leq v$ in $Q_\rho$. This combined with condition (iii) from $u$ being superparabolic gives us 
	\begin{equation*}
		\vp(x,t)+\delta\leq v(x,t)\leq u(x,t) \text{ for all }(x,t)\in Q_\rho.
	\end{equation*}
	But this is a contradiction as this implies $\vp(x_0,t_0)+\delta\leq u(x_0,t_0)=\vp(x_0,t_0)$.
	
	\textbf{Case 2:} $\partial_{t}\vp(x_0,t_0)<0$ and $\nabla\vp(x_0,t_0)=0$. Because $\vp$ is admissible, we have by Definition \ref{def:admissible}, that for some $\rho>0$, $f\in\mathcal{F}(F)$ and $g(x)=f(\abs{x})$, we have
	\begin{equation}
		\label{eq:barrierdef1}
		\abs{\vp(x,t)-\vp(x_0,t_0)-\partial_t\vp(x_0,t_0)(t-t_0)}\leq g(x-x_0)+\sigma(t-t_0)
	\end{equation}
	for all $(x,t)\in B_\rho(x_0)\times(t_0-\rho,t_0+\rho)$. 
	Define
	\begin{equation*}
		\phi(x,t)=u(x_0,t_0)+\partial_t\vp(x_0,t_0)(t-t_0)-g(x-x_0)-\sigma(t-t_0)
	\end{equation*}
	which is an admissible test function touching $u$ at $(x_0,t_0)$ from below because
	\begin{align*}
		\abs{\phi(x,t)-\phi(x_0,t_0)-\partial_t\phi(x_0,t_0)(t-t_0)}&=\abs{\phi(x,t)-u(x_0,t_0)-\partial_t\vp(x_0,t_0)(t-t_0)}
		\\&\leq g(x-x_0)+\sigma(t-t_0).
	\end{align*}
	By definition of $\mathcal{F}(F)$, $g$ satisfies
	\begin{equation}
		\label{eq:barrierdef2}
		\lim_{\substack{x\to0\\x\not=0}}\Delta_p^qg(x-x_0)=0.
	\end{equation}
	Because we assumed that $\partial_t\vp(x_0,t_0)<0$ and $\sigma'(t-t_0)>0$ in some small punctured neighborhood of $t_0$ by definition of $\Sigma$, a direct calculation using \eqref{eq:barrierdef2} yields
	\begin{equation*}
		\partial_t\phi(x,t)-\Delta_p^q\phi(x,t)=\partial_t\vp(x_0,t_0)-\sigma'(t-t_0)-\Delta_p^qg(x-x_0)<0
	\end{equation*}
	in some small punctured neighborhood $V^*$ of $(x_0,t_0)$. This means that $\phi$ is a classical subsolution to \eqref{eq:rgnppar} in $V^*$. Because we have $\partial_{t}\phi(x_0,t_0)<0$, we can conclude by \cite[Lemma 4.1]{Ohnuma1997} that $\phi$ is a viscosity subsolution to \eqref{eq:rgnppar} in $V=V^*\cup\{(x_0,t_0)\}$.
	
	The rest is similar to Case 1. By the definition of touching from below, it is possible for us to choose $\tilde{\rho}$ so small that we can pick $\delta>0$ so that \begin{equation*}
		\phi+\delta\leq u \text{ on } \partial_{p}Q_{\tilde{\rho}}
	\end{equation*}
	and $Q_{\tilde{\rho}}\subset V$. Let $v$ be a viscosity solution to the Dirichlet problem \eqref{eq:dirichlet} with $\phi+\delta$ as boundary values on the set $Q_{\tilde{\rho}}$. This exists by Theorem \ref{thm:existball}. Because $\phi+\delta$ is a viscosity subsolution in this set, we can use the comparison principle Theorem \ref{thm:comp} to deduce $\phi+\delta\leq v$ in $Q_{\tilde{\rho}}$. This combined with condition (iii) from $u$ being superparabolic gives us 
	\begin{equation*}
		\phi(x,t)+\delta\leq v(x,t)\leq u(x,t) \text{ for all }(x,t)\in Q_{\tilde{\rho}}.
	\end{equation*}
	But this is a contradiction as this implies $\phi(x_0,t_0)+\delta\leq u(x_0,t_0)=\phi(x_0,t_0)$.
	
	Both cases lead to a contradiction and thus $u$ is a viscosity supersolution.
\end{proof}


\end{document}